\numberwithin{equation}{section}
\newtheorem{theorem}{Theorem}[section]
\newtheorem{proposition}[theorem]{Proposition}
\newtheorem{corollary}{Corollary}[section]
\newtheorem*{thm*}{Theorem A}
\newtheorem{thm}{Theorem}[section]
\newtheorem{dfn}{Definition}[section]
\newtheorem{lemma}{Lemma}[section]
\theoremstyle{definition}
\def\XXint#1#2#3{{\setbox0=\hbox{$#1{#2#3}{\int}$}
     \vcenter{\hbox{$#2#3$}}\kern-.5\wd0}}
\def\ep{\varepsilon}
\def\l{\lambda}
\def\R{{\mathbb R} }
\def\r{\R^{n+1}_{+}}
\def\rn{\R^{n}}
\def\B{\mathbb{R}^n}
\def\br{\partial\r}
\def\super{\overline}
\def\s{(-\Delta)^s}
\def\ss{(-\Delta)^{\frac{s}{2}}}
\def\b{\mathbb{S}^{n-1}}
\def\ou{\overline{u}}
\def\D{\Delta}
\def\R{{\mathbb R} }
\def\bg{\begin{equation*}}
\def\ng{\end{equation*}}
\def\bge{\begin{eqnarray*}}
\def\nge{\end{eqnarray*}}
\def\r{\R^{n+1}_{+}}
\def\rn{\R^{n}}
\def\br{\partial\r}
\def\super{\overline}
\def\of{\overline{f}_\alpha}
\def\ofa{\overline{f}_{\alpha-1}}
\def\ofb{\overline{f}_{\alpha+1}}
\def\f{f_\alpha}
\def\fa{f_{\alpha-1}}
\def\fb{f_{\alpha+1}}
\def\ov{\overline{V}_\alpha}
\def\bff{{\bf f}}
\begin{document}

\title[fractional Toda system]{On stable and finite Morse index solutions of the fractional Toda system}

\author[M. Fazly]{Mostafa Fazly}
\address{\noindent Mostafa Fazly, Department of Mathematics, University of Texas at San Antonio, San Antonio TX 78249.}
\email{mostafa.fazly@utsa.edu}

\author[W. Yang]{ Wen Yang}
\address{\noindent Wen ~Yang,~Wuhan Institute of Physics and Mathematics, Chinese Academy of Sciences, P.O. Box 71010, Wuhan 430071, P. R. China}
\email{wyang@wipm.ac.cn}

\begin{abstract}
We develop a monotonicity formula for solutions of the
fractional Toda system
$$   (-\Delta)^s f_\alpha    =   e^{-(f_{\alpha+1}-f_\alpha)}  -  e^{-(f_\alpha-f_{\alpha-1})}  \quad  \text{in} \ \  \mathbb R^n,$$
when $0<s<1$,  $\alpha=1,\cdots,Q$, $f_0=-\infty$, $f_{Q+1}=\infty$ and $Q \ge2$ is the number of equations in this system.  We then apply this formula, technical integral estimates, classification of stable homogeneous solutions, and blow-down analysis arguments to establish  Liouville type theorems for finite Morse index (and stable) solutions of the above system
when  $n > 2s$ and
$$
\dfrac{\Gamma(\frac{n}{2})\Gamma(1+s)}{\Gamma(\frac{n-2s}{2})} \frac{Q(Q-1)}{2}  > \frac{ \Gamma(\frac{n+2s}{4})^2  }{ \Gamma(\frac{n-2s}{4})^2} .
$$ Here, $\Gamma$ is the Gamma function.  When $Q=2$, the above equation is the classical (fractional) Gelfand-Liouville equation.

\end{abstract}

\maketitle

\noindent
{\it \footnotesize 2010 Mathematics Subject Classification  35B45, 35B08, 45K05, 45G15, 35B53}. {\scriptsize }\\
{\it \footnotesize Keywords:  Toda system, stable solution, finite Morse index solution, fractional Laplacian, monotonicity formula}. {\scriptsize }

\section{Introduction}
In this article, we study stable and finite Morse index solutions of the fractional Toda system
 \begin{equation} \label{main}
   (-\Delta)^s f_\alpha    =   e^{-(f_{\alpha+1}-f_\alpha)}  -  e^{-(f_\alpha-f_{\alpha-1})}  \quad  \text{in} \ \  \mathbb R^n, \quad \alpha=1,\cdots,Q,
  \end{equation}
where $0<s<1$ and $Q \ge2$ is the number of equations in this system. Let $f_0=-\infty$ and $f_{Q+1}=\infty$.  By taking $u_\alpha:=f_\alpha-f_{\alpha+1}$, we can write \eqref{main} into the following classical form
\begin{equation}
\label{1.otoda}
(-\Delta)^su_\alpha=2e^{u_\alpha}-e^{u_{\alpha+1}}-e^{u_{\alpha-1}}\quad  \text{in} \ \  \mathbb R^n,\quad \alpha=1,\cdots,Q-1,
\end{equation}
where $u_0=u_Q=-\infty.$ Equation \eqref{1.otoda} is called the $SU(Q)$ Toda system. When $Q=2$, the above equation \eqref{1.otoda} is the classical (fractional) Gelfand-Liouville equation,
 \begin{equation} \label{maingl}
   (-\Delta)^s u    =   e^{u} \quad  \text{in} \ \  \mathbb R^n.
  \end{equation}
The Toda system (in classical setting) and the Gelfand-Liouville equation have been studied in the research areas rooted in mathematical analysis and geometry.
In physics, the Toda system is connected with the Chern-Simons-Higgs system, while in complex geometry,  solutions of the Toda system (in classical setting) are closely related to the holomorphic curves in projective spaces. Particularly, the classical Pl\"ucker formula can be written as a local version of the $SU(Q)$ Toda system. The literature in this
context is too vast to give more than a few representative references,  see \cite{m5, bennet,doliwa,dunne1,mal,lwy,ko,jw,dkw,lwyz,lwz0,lwz,lyz} for the background and recent developments. In this paper, we shall study \eqref{main} from another point of view, i.e., focusing on classifying the stable and finite Morse index solutions of \eqref{main}. Stable and  finite Morse index solutions of supercritical fractional elliptic equations
\begin{equation}
(-\Delta)^s u=W(u) \ \ \mathbb R^n,
\end{equation}
including Gelfand-Liouville equation and Lane-Emden equation  when $W(u)=e^u$ and $W(u)=u^p$ for $p>1$, respectively, are studied in the literature as well, see \cite{d,df,ddw,dggw,fw,fwy,hy,ww}. In the absence of stability, the classification of solutions of  Gelfand-Liouville equation in lower-dimensions is studied in \cite{clo,dmr} and references therein. Notice that the classification of stable solutions of elliptic multi-component systems is also studied in the context of De Giorgi's conjecture, see \cite{fs,kwang2,st} and references therein.

We assume that $f_\alpha \in C^{2\gamma}(\mathbb R^n)$, $\gamma >s>0$ and
\begin{equation}
\int_{\mathbb R^n} \frac{|f_\alpha(x)|}{(1+|x|)^{n+2s}} dx<\infty.
\end{equation}
 The fractional Laplacian of $u$ when $0<s<1$  denoted by
\begin{equation}
(-\Delta)^s f_\alpha(x):= p.v. \int_{\mathbb R^n} \frac{f_\alpha(x)-f_\alpha(z)}{|x-z|^{n+2s}} dz,
\end{equation}
is well-defined for every $x\in\mathbb R^n$. Here $p.v.$ stands for the principle value. It is by now standard that the fractional Laplacian can be seen as a Dirichlet-to-Neumann operator for a
degenerate but local diffusion operator in the higher-dimensional half-space $\mathbb R^{
n+1}$, see Caffarelli and Silvestre in \cite{cs}.  In other words, for $f_\alpha\in C^{2\gamma} \cap L^1(\mathbb R^n, (1+|x|^{n+2s})dx)$ when $\gamma>s$ and $0<s<1$,  there exists  $ \of\in C^2(\mathbb R^{n+1}_+)\cap C(\super{\mathbb R^{n+1}_+})$ such that $y^{1-2s}\partial_{y} \of \in C(\super{\mathbb R^{n+1}_+})$ and

\begin{equation}\label{maine}
\left\{
\begin{aligned}
\nabla\cdot(y^{1-2s}\nabla  \of)&=0&\quad&\text{in $\R^{n+1}_{+}$,}\\
  \of&=  f_\alpha&\quad&\text{on $\br$,}\\
-\lim_{y\to0} y^{1-2s}\partial_{y} \of&= \kappa_s   (-\Delta)^s f_\alpha  &\quad&\text{on $\br$,}
\end{aligned}
\right.
\end{equation}
for the following constant $\kappa_s$,
\begin{equation} \label{kappas}
\kappa_s := \frac{\Gamma(1-s)}{2^{2s-1} \Gamma(s)}.
\end{equation}
Here is the notion of stability.
 \begin{dfn}
We say that a solution $\bff=(f_1,\cdots,f_Q)$ of (\ref{main}) is stable outside a compact set  if there exists $R_0>0$ such that
\begin{equation}\label{stability}
\frac{C_{n,s}}{2} \sum_{\alpha} \int_{\mathbb R^n} \int_{\mathbb R^n} \frac{ ( \phi_\alpha(x)-\phi_\alpha(y) )^2 }{|x-y|^{n+2s}} dx dy= \sum_{\alpha}\int_{\mathbb R^n}| (-\Delta )^{\frac{s}{2}}\phi_\alpha|^2dx
\ge \sum_{\alpha}\int_{\mathbb R^n}  e^{-(f_\alpha-f_{\alpha-1})} (\phi_\alpha - \phi_{\alpha-1})^2 ,
\end{equation}
 for any $\phi\in C_c^\infty(\mathbb R^n\setminus \overline {B_{R_0}})$. We say a solution is stable if $\phi\in C_c^\infty(\mathbb R^n)$.
 \end{dfn}
From any solution to equation \eqref{main}, it is straightforward to see that the summation of $f_\alpha$ is a $s$-harmonic function, we shall assume that any solution to \eqref{main} satisfy
\begin{equation}
\label{1.summation}
\sum_{\alpha=1}^Qf_\alpha=0.
\end{equation}
Therefore, it is natural to assume that test functions $\{\phi_\alpha\}_\alpha$ satisfy an additional constraint
\begin{equation}
\sum_{\alpha=1}^Q \phi_\alpha = 0.
\end{equation}
Throughout this paper, we use the convention that $\phi_0=0$. Here, we list  our main results.

\begin{thm}\label{thmtodam}
There is no entire  finite Morse index solution of fractional Toda system (\ref{main}) for $1\le \alpha \le Q$ for $Q\ge 2$ when $n > 2s$ and
\begin{equation}\label{assum2}
\dfrac{\Gamma(\frac{n}{2})\Gamma(1+s)}{\Gamma(\frac{n-2s}{2})} \frac{Q(Q-1)}{2}  > \frac{ \Gamma(\frac{n+2s}{4})^2  }{ \Gamma(\frac{n-2s}{4})^2} .
\end{equation}
Here,  $\Gamma$ is the Gamma function.
\end{thm}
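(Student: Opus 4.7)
The plan is a proof by contradiction via a blow-down analysis. Suppose $\mathbf{f}=(f_1,\ldots,f_Q)$ is an entire finite Morse index solution of (\ref{main}), hence stable outside some ball $B_{R_0}$. The essential invariance of (\ref{main}) is the one-parameter family of rescalings
\[
f_\alpha^\lambda(x) := f_\alpha(\lambda x) + s(Q-2\alpha+1)\log\lambda,\qquad \alpha=1,\ldots,Q,
\]
which preserves both the equation and the zero-mean constraint $\sum_\alpha f_\alpha=0$. After lifting to the half-space via (\ref{maine}), I would design a rescaled energy
\begin{equation*}
E(r;\overline{\mathbf{f}}) = r^{\tau}\Bigl(\tfrac12\sum_\alpha\int_{B_r^+} y^{1-2s}|\nabla\overline{f}_\alpha|^2\,dX\,dy-\kappa_s\sum_\alpha\int_{B_r} e^{-(f_\alpha-f_{\alpha-1})}\,dx\Bigr) + \text{(boundary corrections)},
\end{equation*}
where $\tau$ and the correction terms --- involving weighted traces of $f_\alpha+s(Q-2\alpha+1)\log r$ on $\partial B_r$ --- are tuned so that $E$ is invariant under the rescaling above. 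Differentiating in $r$, in the spirit of monotonicity formulae developed for the fractional Gelfand-Liouville equation and for the classical Toda system, should produce $\frac{d}{dr}E(r)\geq 0$ with equality iff $\overline{\mathbf{f}}$ is homogeneous under the rescaling.

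Next, combining finite Morse index with the stability inequality (\ref{stability}) against suitable cutoffs, I would establish uniform local bounds on the Dirichlet energy and on $\int e^{-(f_\alpha-f_{\alpha-1})}$ compatible with the rescaling. Via the monotonicity formula these bounds pass to the blow-down sequence $f_\alpha^R(x):=f_\alpha(Rx)+s(Q-2\alpha+1)\log R$, yielding compactness and a subsequential limit $\mathbf{f}^\infty$ which (i) solves (\ref{main}) on $\mathbb{R}^n\setminus\{0\}$, (ii) remains stable there (because the stability region $\mathbb{R}^n\setminus B_{R_0/R}$ expands to $\mathbb{R}^n\setminus\{0\}$), and (iii) is homogeneous in the sense $f_\alpha^\infty(\lambda x)=f_\alpha^\infty(x)-s(Q-2\alpha+1)\log\lambda$, by the equality case of the monotonicity formula.

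The final step, which I expect to present the main difficulty, is to rule out such stable homogeneous limits under (\ref{assum2}). Decomposing $f_\alpha^\infty(x)=-s(Q-2\alpha+1)\log|x|+\Psi_\alpha(x/|x|)$ and reducing to the radial case $\Psi_\alpha\equiv c_\alpha$ through an averaging/convexity argument on $\mathbb{S}^{n-1}$ together with the boundary data $f_0=-\infty$, $f_{Q+1}=+\infty$, the explicit identity $(-\Delta)^s(-\log|x|)=2^{2s-1}\frac{\Gamma(n/2)\Gamma(s)}{\Gamma((n-2s)/2)}|x|^{-2s}$ combined with the Toda recursion forces
\[
e^{-(f_\alpha^\infty-f_{\alpha-1}^\infty)} = 2^{2s-1}\frac{\Gamma(n/2)\Gamma(1+s)}{\Gamma((n-2s)/2)}\,(\alpha-1)(Q-\alpha+1)\,|x|^{-2s}.
\]
Testing (\ref{stability}) with $\phi_\alpha(x)=b_\alpha|x|^{-(n-2s)/2}\eta(\log|x|)$, where $\sum_\alpha b_\alpha=0$ and $\eta\in C_c^\infty(\mathbb{R})$, and invoking the sharp fractional Hardy inequality with constant $2^{2s}\Gamma^2((n+2s)/4)/\Gamma^2((n-2s)/4)$, stability reduces to the discrete spectral inequality
\[
\frac{\Gamma^2((n+2s)/4)}{\Gamma^2((n-2s)/4)}\sum_{\alpha=1}^{Q} b_\alpha^2 \;\geq\; \tfrac12\,\frac{\Gamma(n/2)\Gamma(1+s)}{\Gamma((n-2s)/2)}\sum_{\alpha=2}^{Q}(\alpha-1)(Q-\alpha+1)(b_\alpha-b_{\alpha-1})^2
\]
for all $(b_\alpha)$ with $\sum b_\alpha=0$. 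A direct check using Pascal's identity shows that the choice $b_\alpha=(-1)^{\alpha-1}\binom{Q-1}{\alpha-1}$ is an eigenvector of the tridiagonal quadratic form on the right with eigenvalue $Q(Q-1)$, and is extremal on the hyperplane $\sum b_\alpha=0$; substituting it converts the above into precisely the reverse of (\ref{assum2}), the desired contradiction. The two most delicate points are (a) the reduction to constant angular profiles $\Psi_\alpha$, which requires a convexity or averaging argument for the coupled nonlocal system on the sphere, and (b) the sharp identification of the top eigenvalue of the Toda Jacobi matrix, which is settled by the explicit binomial eigenvector above.
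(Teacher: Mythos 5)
Your blueprint matches the paper's overall architecture — rescaling, a monotonicity formula for a scaled energy, uniform local bounds from stability and Moser-type iteration, weak compactness of the blow-down sequence, homogeneity of the limit from equality in the monotonicity formula, and a final contradiction against stability via a sharp Hardy inequality plus a discrete quadratic-form comparison. Your explicit extremal vector $b_\alpha=(-1)^{\alpha-1}\binom{Q-1}{\alpha-1}$, which indeed has $\sum_\alpha b_\alpha=0$ and achieves $\sum_{\alpha=1}^{Q-1}\alpha(Q-\alpha)(b_{\alpha+1}-b_\alpha)^2 = Q(Q-1)\sum_\alpha b_\alpha^2$ (check: $Q=2$ gives $4/2=2$; $Q=3$ gives $36/6=6$; $Q=4$ gives $240/20=12$), is a nice and concrete improvement over the paper, which simply cites the classical Toda work of K.~Wang for this combinatorial step.

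However, there is a genuine gap in your treatment of the homogeneous classification, precisely at the point you yourself flag as delicate. You propose to \emph{reduce to constant angular profiles} $\Psi_\alpha\equiv c_\alpha$ via an unspecified convexity/averaging argument, and then use the pointwise identity $e^{-(f^\infty_\alpha-f^\infty_{\alpha-1})}=\mathrm{const}\cdot|x|^{-2s}$. No such reduction is provided, and it is not clear how to force the angular profiles of a stable homogeneous solution of a \emph{coupled nonlocal} system to be constant. Crucially, this reduction is also unnecessary. The paper instead keeps $\psi_\alpha(\theta)$ general and observes that, testing the equation against radial test functions, one only needs the \emph{averaged} identities
\begin{equation*}
\int_{\mathbb{S}^{n-1}} e^{-(\psi_{\alpha+1}-\psi_\alpha)}\,d\theta = 2^{2s-1}\dfrac{\Gamma(\frac n2)\Gamma(1+s)}{\Gamma(\frac{n-2s}{2})}\,\alpha(Q-\alpha)\,|\mathbb{S}^{n-1}|,
\end{equation*}
obtained by an inductive telescoping in $\alpha$. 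Since the stability test with $\phi_\alpha=c_\alpha r^{-(n-2s)/2}\eta_\varepsilon(r)$ also only sees these spherical averages (the radial cutoff factors out, and the leading $\log(1/\varepsilon)$ coefficient involves only $\int_{\mathbb{S}^{n-1}} e^{-(\psi_{\alpha+1}-\psi_\alpha)}\,d\theta$), the argument closes without ever establishing that $\psi_\alpha$ is constant. You should replace the reduction-to-constants step by this integrated version; as written, your proof has a hole at exactly the step you identified as hard.
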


\begin{thm}\label{thmtodaq}
There is no entire stable solution of fractional Toda system (\ref{main}) for $1\le \alpha \le Q$ for $Q\ge 2$ when $n\le 2s$ or $n > 2s$ and
(\ref{assum2}) holds.
\end{thm}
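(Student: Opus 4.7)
The argument will split naturally into two regimes according to whether $n$ exceeds $2s$.

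\textbf{Case 1: $n>2s$ with (\ref{assum2}).} This case follows immediately from Theorem \ref{thmtodam}: an entire stable solution satisfies the stability inequality (\ref{stability}) for \emph{every} $\phi\in C_c^\infty(\mathbb R^n)$ and is in particular an entire finite Morse index solution, whose existence is already ruled out.

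\textbf{Case 2: $n\le 2s$.} Since $s<1$ this forces $n=1$ and $s\in[\tfrac12,1)$. Here I would derive a contradiction directly from (\ref{stability}) by inserting a very simple test function that respects the constraint $\sum_\alpha \phi_\alpha=0$. Fix constants $c_1,\dots,c_Q$ with $\sum_\alpha c_\alpha=0$ and $c_\beta-c_{\beta-1}\ne 0$ for some $\beta\ge 2$ (e.g.\ $c_1=Q-1$ and $c_\alpha=-1$ for $\alpha\ge 2$, which gives $c_2-c_1=-Q$). Let $\eta_R\in C_c^\infty(\mathbb R^n)$ be a radial cutoff to be chosen, and set $\phi_\alpha := c_\alpha\eta_R$. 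Then $\phi_\alpha-\phi_{\alpha-1}=(c_\alpha-c_{\alpha-1})\eta_R$ and (\ref{stability}) collapses to
\begin{equation*}
\Big(\sum_\alpha c_\alpha^2\Big)\int_{\mathbb R^n}|(-\Delta)^{s/2}\eta_R|^2\,dx \;\ge\; \sum_{\alpha\ge 2}(c_\alpha-c_{\alpha-1})^2\int_{\mathbb R^n}e^{-(f_\alpha-f_{\alpha-1})}\eta_R^2\,dx.
\end{equation*}

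For the subcritical range $n<2s$ the proof is essentially immediate: take $\eta_R(x):=\eta(x/R)$ for a fixed bump $\eta$ with $\eta\equiv 1$ on $B_1$, so that by the standard scaling of the Gagliardo seminorm $\int|(-\Delta)^{s/2}\eta_R|^2dx=R^{n-2s}\int|(-\Delta)^{s/2}\eta|^2dx\to 0$ as $R\to\infty$. Meanwhile the right-hand side increases monotonically in $R$ to $\sum_{\alpha\ge 2}(c_\alpha-c_{\alpha-1})^2\int e^{-(f_\alpha-f_{\alpha-1})}dx$, which is strictly positive because each $e^{-(f_\alpha-f_{\alpha-1})}$ is a positive continuous function and $(c_\beta-c_{\beta-1})^2>0$. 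This contradicts the inequality.

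The critical dimension $n=2s$ (i.e.\ $n=1$, $s=\tfrac12$) is the main obstacle because the polynomial scaling gives $R^{n-2s}=1$. To handle it I would replace the polynomial cutoff by a logarithmic one, $\eta_R(x)=1$ on $B_1$, $\eta_R(x)=(\log R-\log|x|)/\log R$ on $B_R\setminus B_1$, and $\eta_R(x)=0$ outside $B_R$. A direct annular double-integral calculation (or equivalently the standard fractional capacity estimate of $B_1$ relative to $B_R^c$ at the critical exponent) gives $\int|(-\Delta)^{s/2}\eta_R|^2dx\le C/\log R\to 0$, while the right-hand side is still bounded below by $(c_\beta-c_{\beta-1})^2\int_{B_1}e^{-(f_\beta-f_{\beta-1})}dx>0$. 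The contradiction closes the critical case. The rest of the argument is a routine application of the stability inequality with a carefully structured constant-vector test function, and the only delicate point is this logarithmic capacity estimate.
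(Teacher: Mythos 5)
Your proof is correct and follows the paper's approach in substance. For $n>2s$ with (\ref{assum2}) you simply observe that a stable solution is a fortiori stable outside a compact set, so Theorem \ref{thmtodam} applies; the paper instead runs the blow-down argument in parallel for both the stable and finite-Morse-index cases, but the content is identical. For $n\le 2s$ the paper only remarks that the result ``follows by applying the stability inequality with appropriate test functions and using the $s$-extension arguments'' without giving details; you fill in precisely this gap, working directly with the Gagliardo seminorm rather than the $s$-harmonic extension. Your capacity computations are sound: the scaling $\int|(-\Delta)^{s/2}\eta_R|^2=R^{n-2s}\int|(-\Delta)^{s/2}\eta|^2\to 0$ handles $n<2s$, and in the critical case $n=2s$ (i.e.\ $n=1,\ s=1/2$) the logarithmic cutoff indeed gives $[\eta_R]_{1/2}^2\lesssim 1/\log R$ — for instance, passing to logarithmic coordinates $x=e^u$ the kernel becomes $(4\sinh^2((u-v)/2))^{-1}$, which is $\sim(u-v)^{-2}$ near the diagonal and decays exponentially at infinity, and an elementary split of the $u$-$v$ integral yields the $O(1/\log R)$ bound. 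The observation that the right-hand side of the stability inequality is bounded below by $(c_\beta-c_{\beta-1})^2\int_{B_1}e^{-(f_\beta-f_{\beta-1})}>0$ closes both subcases cleanly.
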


When $Q=2$, that (\ref{main}) turns into the Gelfand-Liouville equation,  and $s=1$, (\ref{assum2}) reads $2<N<10$, and  $N=10$ is known as the critical Joseph-Lundgren optimal dimension. In addition, the above results recover the Dancer-Farina's classification results in \cite{df,f}. For the cases of $0<s<1$ and $1<s \le 2$,  (\ref{assum2}) concurs with the ones given in \cite{hy,fwy,dggw}.

Now define the energy functional  for any  $\lambda>0$ and $x_{0}\in\br$  as
\begin{equation}
\label{energyEs1}
\begin{aligned}
E(\overline{ \bff },\lambda,x_0) :=~&   \lambda^{2s-n} \sum_{\alpha} \left(\frac12\int_{\r\cap B^{n+1}(x_0,\lambda)} y^{1-2s} \vert\nabla\of\vert^2\;dx\,dy - \kappa_{s} \int_{\br\cap B^{n+1}(x_0,\lambda)}  e^{-(\of - \ofa)}\;dx\right)\\
&- s \lambda^{2s-1-n}  \sum_{\alpha} (2\alpha-Q-1)\int_{\partial B^{n+1}(x_0,\lambda) \cap\r}y^{1-2s} [\of - (2\alpha-Q-1)s \log r ]\;d\sigma.
\end{aligned}
\end{equation}
Here is a monotonicity formula for solutions of \eqref{maine}  when $0<s<1$ that is our main tool to establish the above results.

 \begin{thm}\label{thmono1s}
Suppose that $0<s<1$.  Let $\of\in C^2(\r)\cap C(\super\r)$ be  a solution of \eqref{maine}  such that $y^{1-2s}\partial_{y} \of\in C(\super\r)$. Then, $E$ is a nondecreasing function of $\lambda$. Furthermore,
\begin{equation}
\frac{dE}{d\lambda} = \lambda^{2s-n+1}\int_{\partial B^{n+1}(x_0,\l)\cap\r}y^{1-2s}\sum_\alpha\left(\frac{\partial  \of}{\partial r}-\frac{(2\alpha-Q-1)s}{r}\right)^2\;d\sigma ,
\end{equation}
where $E $ provided in (\ref{energyEs1}).
\end{thm}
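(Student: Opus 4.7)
The plan is to exploit the scale invariance of the Toda extension. Without loss of generality take $x_0=0$, and for $\lambda>0$ set
\[
V_\alpha^\lambda(x,y):=\overline{f}_\alpha(\lambda x,\lambda y)-(2\alpha-Q-1)s\log\lambda.
\]
The log-shift is the unique choice (compatible with $\sum_\alpha f_\alpha=0$) that makes the entire Toda system dilation-invariant: the factor $\lambda^{2s}$ produced by $(x,y)\mapsto(\lambda x,\lambda y)$ in the weighted Neumann datum $-\lim_{y\to 0}y^{1-2s}\partial_y$ is exactly absorbed into $e^{-(V_{\alpha+1}^\lambda-V_\alpha^\lambda)}$ by the $-2s\log\lambda$ shift between consecutive components, so that each $V_\alpha^\lambda$ again solves \eqref{maine} with the Toda right-hand side. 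A change of variable $(x',y')=(\lambda x,\lambda y)$ recasts the bulk and disc parts of $E(\overline{\bff},\lambda,0)$ as the $\lambda$-independent integrals
\[
\sum_\alpha\Bigl[\tfrac12\int_{B_1^+}y^{1-2s}|\nabla V_\alpha^\lambda|^2-\kappa_s\int_{B_1\cap\mathbb{R}^n}e^{-(V_\alpha^\lambda-V_{\alpha-1}^\lambda)}\Bigr];
\]
since $r=\lambda$ on $\partial B^{n+1}(0,\lambda)$, one has $\overline{f}_\alpha-(2\alpha-Q-1)s\log r=V_\alpha^\lambda$ there, collapsing the angular correction to $-s\sum_\alpha(2\alpha-Q-1)\int_{\partial B_1^+}y^{1-2s}V_\alpha^\lambda\,d\sigma$. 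After this reduction, $E(\lambda)$ depends on $\lambda$ only through $V^\lambda$.

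Next I would differentiate $E$ in $\lambda$ and integrate by parts in the bulk using $\nabla\cdot(y^{1-2s}\nabla V_\alpha^\lambda)=0$. Two boundary contributions arise: a hemisphere term $y^{1-2s}(\partial_\rho V_\alpha^\lambda)(\partial_\lambda V_\alpha^\lambda)$ on $\partial B_1^+$, and a disc term $\kappa_s[e^{-(V_{\alpha+1}^\lambda-V_\alpha^\lambda)}-e^{-(V_\alpha^\lambda-V_{\alpha-1}^\lambda)}](\partial_\lambda V_\alpha^\lambda)$ on $\{y=0\}\cap B_1$ obtained by inserting the Neumann condition. The $\lambda$-derivative of the potential $-\kappa_s\sum_\alpha\int e^{-(V_\alpha^\lambda-V_{\alpha-1}^\lambda)}$ gives, on the same disc, $\kappa_s\sum_\alpha e^{-(V_\alpha^\lambda-V_{\alpha-1}^\lambda)}(\partial_\lambda V_\alpha^\lambda-\partial_\lambda V_{\alpha-1}^\lambda)$. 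A summation-by-parts in $\alpha$, legitimate because $V_0^\lambda\equiv-\infty$ and $V_{Q+1}^\lambda\equiv+\infty$ annihilate the endpoint terms, cancels the two disc contributions against each other; this is the Toda analogue of $\sum_\alpha\partial_{V_\alpha}F=0$ for the potential $F=\sum_\alpha e^{-(V_\alpha-V_{\alpha-1})}$.

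Only the hemisphere integral then survives. A direct chain-rule calculation gives $\partial_\lambda V_\alpha^\lambda=\lambda^{-1}[|(x,y)|\,\partial_{|(x,y)|}V_\alpha^\lambda-(2\alpha-Q-1)s]$, which on $\partial B_1^+$ simplifies to $\lambda^{-1}[\partial_\rho V_\alpha^\lambda-(2\alpha-Q-1)s]$. Adding the hemisphere piece of the correction term, namely $-s(2\alpha-Q-1)\partial_\lambda V_\alpha^\lambda$ under the integral, completes the square to produce
\[
\frac{dE}{d\lambda}=\frac{1}{\lambda}\sum_\alpha\int_{\partial B_1^+}y^{1-2s}\bigl(\partial_\rho V_\alpha^\lambda-(2\alpha-Q-1)s\bigr)^2 d\sigma,
\]
which is manifestly non-negative. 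A final change of variables back to $\partial B^{n+1}(0,\lambda)\cap\mathbb{R}^{n+1}_+$, using $\partial_\rho V_\alpha^\lambda|_{\rho=1}=\lambda\,\partial_r\overline{f}_\alpha|_{r=\lambda}$ together with the scaling of $y^{1-2s}$ and of the surface measure, yields the identity stated in the theorem.

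The principal difficulty is the disc cancellation step: it hinges on the precise Toda pairing (nearest-neighbour couplings giving a telescoping sum) and on the endpoint convention $f_0=-\infty$, $f_{Q+1}=+\infty$, without which endpoint residues would obstruct the cancellation. The remaining analytic points—differentiation under the integral and the weighted Neumann trace—are covered by the assumed regularity $y^{1-2s}\partial_y\overline{f}_\alpha\in C(\overline{\mathbb{R}^{n+1}_+})$, and the scale invariance of $\nabla\cdot(y^{1-2s}\nabla\cdot)$ ensures no residual correction from the weight.
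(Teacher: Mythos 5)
Your proposal is correct and follows essentially the same route as the paper's proof: the identical dilation $\overline f_\alpha^\lambda(X)=\overline f_\alpha(\lambda X)-(2\alpha-Q-1)s\log\lambda$, the same observation that $E(\lambda)$ depends on $\lambda$ only through $\overline f_\alpha^\lambda$, the same bulk integration by parts with the telescoping (nearest-neighbour) cancellation on $\{y=0\}$, and the same identity $\partial_r\overline f_\alpha^\lambda=\lambda\,\partial_\lambda\overline f_\alpha^\lambda+(2\alpha-Q-1)s$ to complete the square on the hemisphere; the paper leaves the disc cancellation implicit, while you spell out the summation-by-parts in $\alpha$, but this is the same step. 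One small remark unrelated to your argument: the scaling back to $\partial B^{n+1}(x_0,\lambda)$ in fact produces the prefactor $\lambda^{2s-n}$ rather than the $\lambda^{2s-n+1}$ written in the theorem's displayed identity (as you would see if you carried your final change of variables through explicitly); this does not affect the non-negativity of $dE/d\lambda$ and hence the monotonicity conclusion.
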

Before ending the introduction, we would like to mention some technical difficulties, among other things, arising from the Toda system as opposed to the scalar  Gelfand-Liouville equation. The right-hand side of (\ref{main}) changes sign and this  makes it more challenging to derive a priori estimates for $u_\alpha$, like the ones in \cite[Lemma 3.2]{hy} and \cite[Lemma 4.12]{fwy}.  Due to the same reason, the study of  the representation formula of $u_\alpha^\l$, see \eqref{4.7}, is more challenging. We overcome  this difficulty by giving a more refined estimation in the proofs. To be more precise, instead of providing a point-wise bound to the integration (without the constant part), we prove that it is bounded in an area with a positive measure, and this is sufficient for our proofs.

Here is how this article is structured. In Section \ref{sechs}, we classify homogeneous solutions of the form of $f_\alpha(r,\theta)=\psi_\alpha(\theta) + (2\alpha-Q-1)s \log r$. In Section \ref{secen}, we provide technical integral estimates for solutions of for solutions of (\ref{main}), including an integral representation formula for each $f_\alpha$ and also Moser iteration type arguments. In Section \ref{secbd},  we prove the above monotonicity formula for solutions of (\ref{main}) via applying rescaling arguments. Then,  we perform blow-down analysis arguments and prove the rest of main results.

\medskip
\begin{center}
List of Notations:
\end{center}

\begin{itemize}
\item $B_R^{n+1}$ is the ball centered at $0$ with radius $R$ in dimension $(n+1)$.
\smallskip
\item $B_R$ is the ball centered at $0$ with radius $R$ in dimension $n$.
\smallskip
\item $B^{n+1}(x_0,R)$ is  the ball centered at $x_0$ with radius $R$ in dimension $(n+1)$.
\smallskip
\item $B(x_0,R)$ is the ball centered at $x_0$ with radius $R$ in dimension $n$.
\smallskip
\item $X=(x,y)$  represents a point in $\mathbb{R}_+^{n+1}.$
\smallskip
\item $\overline{f}$ is $s$-harmonic extension of function $f$ on $\mathbb{R}_+^{n+1}$.
\item $C$ is  a generic positive constant which may change from line to line.
\item $C(r)$ is a positive constant depending on $r$ and may change from line to line.
\item $\sigma$ is the $n$-dimensional Hausdorff measure restricted to $\partial B^{n+1}(x_0,r)$.
\end{itemize}

\section{Homogeneous Solutions}\label{sechs}
 In this section, we examine homogenous solutions of the form $f_\alpha(r,\theta)=\psi_\alpha(\theta) + (2\alpha-Q-1)s \log r$. Note that the methods and ideas are inspired by the ones used in \cite{ddw,fw,ddww,kwang}, but  considerably different arguments are applied in the proofs.
 \begin{thm}\label{homog}
Let $n>2s$ and
\begin{equation}\label{assum3}
\dfrac{\Gamma(\frac{n}{2})\Gamma(1+s)}{\Gamma(\frac{n-2s}{2})} \frac{Q(Q-1)}{2} > \frac{ \Gamma(\frac{n+2s}{4})^2  }{ \Gamma(\frac{n-2s}{4})^2}.
\end{equation}
 Then,  there is no homogeneous stable solution of the form of $f_\alpha(r,\theta)=\psi_\alpha(\theta) + (2\alpha-Q-1)s \log r$ for (\ref{main}).
 \end{thm}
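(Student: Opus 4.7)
My plan is to test the stability inequality \eqref{stability} with a carefully chosen ``highest-weight'' radial test function and combine it with the sharp fractional Hardy--Herbst inequality to contradict \eqref{assum3}.

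Writing $c_\alpha:=2\alpha-Q-1$, substitute the homogeneous ansatz $f_\alpha(r,\theta)=\psi_\alpha(\theta)+c_\alpha s\log r$ into \eqref{main}; both sides become homogeneous of degree $-2s$. Differentiating the Riesz identity $(-\Delta)^s|x|^\mu=\lambda(\mu)|x|^{\mu-2s}$ at $\mu=0$, where $\lambda(0)=0$, gives
$$(-\Delta)^s(s\log|x|)\;=\;sC_0\,|x|^{-2s},\qquad C_0\;:=\;-\frac{2^{2s-1}}{s}\,\frac{\Gamma(1+s)\,\Gamma(\tfrac n2)}{\Gamma(\tfrac{n-2s}{2})}.$$
Define the spherical moments $a_\alpha:=\int_{\mathbb{S}^{n-1}}e^{-(\psi_\alpha-\psi_{\alpha-1})}\,d\theta$ with the convention $a_1=a_{Q+1}=0$. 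Testing the $\alpha$-th equation of \eqref{main} against a radial $\Phi\in C_c^\infty(\mathbb{R}^n\setminus\{0\})$ and using $\int_{\mathbb{R}^n}(-\Delta)^s\Phi\,dx=0$ (since $\widehat{(-\Delta)^s\Phi}(0)=0$) together with the duality $\int\psi_\alpha\,(-\Delta)^s\Phi=\int(-\Delta)^s\psi_\alpha\cdot\Phi$ forces $\int_{\mathbb{S}^{n-1}}H_\alpha\,d\theta=0$, where $H_\alpha(\theta)$ is the angular profile of $(-\Delta)^s\psi_\alpha$. The resulting identity yields the finite-difference relation $a_{\alpha+1}-a_\alpha=c_\alpha sC_0\,\omega_{n-1}$, which telescopes to the closed form
$$a_\alpha\;=\;K\,(\alpha-1)(Q+1-\alpha),\qquad K\;:=\;-sC_0\,\omega_{n-1}\;=\;2^{2s-1}\omega_{n-1}\,\frac{\Gamma(1+s)\,\Gamma(\tfrac n2)}{\Gamma(\tfrac{n-2s}{2})}.$$

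Now test \eqref{stability} with $\phi_\alpha(x):=v_\alpha\,\eta(|x|)$, where $v_\alpha:=(-1)^{\alpha-1}\binom{Q-1}{\alpha-1}$ and $\eta\in C_c^\infty(\mathbb{R}^n\setminus\overline{B_{R_0}})$ is radial. The constraint $\sum_\alpha v_\alpha=0$ is automatic since $\sum_{\alpha=1}^Q(-1)^{\alpha-1}\binom{Q-1}{\alpha-1}=(1-1)^{Q-1}=0$. Pascal's rule gives $v_\alpha-v_{\alpha-1}=(-1)^{\alpha-1}\binom{Q}{\alpha-1}$, and the identities $j\binom{Q}{j}=Q\binom{Q-1}{j-1}$, $(Q-j)\binom{Q}{j}=Q\binom{Q-1}{j}$, followed by the Vandermonde convolution $\sum_{k=0}^{Q-2}\binom{Q-1}{k}\binom{Q-1}{k+1}=\binom{2Q-2}{Q}$, yield the key eigenvalue identity
$$\sum_{\alpha=2}^{Q}(v_\alpha-v_{\alpha-1})^2\,(\alpha-1)(Q+1-\alpha)\;=\;Q(Q-1)\sum_{\alpha=1}^{Q}v_\alpha^2.$$
Substituting into \eqref{stability}, the common factor $\sum v_\alpha^2$ cancels and the stability inequality collapses to
$$\int_{\mathbb{R}^n}|(-\Delta)^{s/2}\eta|^2\,dx\;\ge\;2^{2s-1}\,Q(Q-1)\,\frac{\Gamma(1+s)\,\Gamma(\tfrac n2)}{\Gamma(\tfrac{n-2s}{2})}\int_{\mathbb{R}^n}\frac{\eta^2}{|x|^{2s}}\,dx$$
for every admissible radial $\eta$.

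To finish, invoke the sharp fractional Hardy--Herbst inequality, whose best constant is $\Lambda_{n,s}=2^{2s}\Gamma(\tfrac{n+2s}{4})^2/\Gamma(\tfrac{n-2s}{4})^2$, near-attained by a dyadic annular approximation of $|x|^{-(n-2s)/2}$ concentrated at radii $R\gg R_0$. Inserting such $\eta$ into the displayed inequality forces
$$\frac{\Gamma(1+s)\,\Gamma(\tfrac n2)}{\Gamma(\tfrac{n-2s}{2})}\,\frac{Q(Q-1)}{2}\;\le\;\frac{\Gamma(\tfrac{n+2s}{4})^2}{\Gamma(\tfrac{n-2s}{4})^2},$$
contradicting \eqref{assum3}. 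I expect the primary technical obstacles to be (i) justifying the vanishing of the spherical mean of $(-\Delta)^s\psi_\alpha$, where boundedness but non-smoothness of $\psi_\alpha(x/|x|)$ at the origin requires a careful duality argument (Fourier, or mollification outside a small ball); and (ii) constructing the near-optimal Hardy--Herbst family within the admissible class $C_c^\infty(\mathbb{R}^n\setminus\overline{B_{R_0}})$, which reduces to dyadic rescaling of the standard Bliss-type extremizing family.
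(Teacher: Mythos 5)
Your proof follows essentially the same route as the paper's: (i) test the homogeneous Toda equation against a radial $\varphi\in C_c^\infty$ and use $(-\Delta)^s\log|x|=-\frac{2^{2s-1}}{s}\frac{\Gamma(\frac n2)\Gamma(1+s)}{\Gamma(\frac{n-2s}{2})}|x|^{-2s}$ together with $\int_{\mathbb R^n}(-\Delta)^s\varphi\,dx=0$ to obtain, after telescoping from $\alpha=1$, the spherical-moment identity $\int_{\mathbb{S}^{n-1}}e^{-(\psi_{\alpha+1}-\psi_\alpha)}\,d\theta=2^{2s-1}|\mathbb{S}^{n-1}|\frac{\Gamma(\frac n2)\Gamma(1+s)}{\Gamma(\frac{n-2s}{2})}\alpha(Q-\alpha)$; (ii) insert a separated test function $\phi_\alpha=c_\alpha\,\eta(|x|)$ with $\eta$ a logarithmically truncated Hardy quasi-extremizer $r^{-(n-2s)/2}\eta_\varepsilon(r)$ into the stability inequality and let $\varepsilon\to0$, giving $2^{2s-1}\frac{\Gamma(\frac n2)\Gamma(1+s)}{\Gamma(\frac{n-2s}{2})}\sum_{\alpha=1}^{Q-1}\alpha(Q-\alpha)(c_{\alpha+1}-c_\alpha)^2\le 2^{2s}\frac{\Gamma(\frac{n+2s}{4})^2}{\Gamma(\frac{n-2s}{4})^2}\sum_{\alpha=1}^{Q}c_\alpha^2$; (iii) optimize over $(c_\alpha)$ with $\sum_\alpha c_\alpha=0$. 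The only genuine deviation is in step (iii): where the paper simply cites Wang's Toda-system paper for the final discrete eigenvalue bound, you exhibit the explicit extremizer $v_\alpha=(-1)^{\alpha-1}\binom{Q-1}{\alpha-1}$ and prove the identity
$$
\sum_{\alpha=1}^{Q-1}\alpha(Q-\alpha)(v_{\alpha+1}-v_\alpha)^2=Q(Q-1)\sum_{\alpha=1}^{Q}v_\alpha^2
$$
via Pascal's rule, $j(Q-j)\binom{Q}{j}^2=Q^2\binom{Q-1}{j-1}\binom{Q-1}{j}$, and Vandermonde (both sides equal $Q^2\binom{2Q-2}{Q}$); I checked this directly for $Q=2,3,4$ and it is correct. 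This makes the combinatorial step self-contained where the paper defers to a reference, which is a modest but real improvement in transparency; otherwise the two arguments coincide, and your conclusion agrees with the paper's.
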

 \begin{proof}
 Let $(f_1,\cdots, f_Q)$ be a solution of \eqref{main}. For every $\alpha$ and for any radially symmetric function $\varphi \in C_c^\infty(\rn)$ we have
\begin{align*}
&\int_{\rn}\left[  e^{-\left(\psi_{\alpha}(\theta) -\psi_{\alpha-1}(\theta) \right) } -e^{-\left(\psi_{\alpha+1}(\theta) -\psi_{\alpha}(\theta) \right) }\right] e^{-2s\log|x|}\varphi dx
\\&=-\int_{\rn}(\psi_\alpha(\theta)+(2\alpha-Q-1)s\log|x|)(-\Delta)^s\varphi(x)dx\\
&=(Q+1-2\alpha)s\int_{\rn}\log|x| (-\Delta)^s\varphi(x)dx=\mathcal A_{n,s,\alpha,Q}\int_{\rn}\frac{\varphi}{|x|^{2s}}dx,
\end{align*}
where $$\mathcal A_{n,s,\alpha,Q} =  2^{2s-1}\dfrac{\Gamma(\frac{n}{2})\Gamma(1+s)}{\Gamma(\frac{n-2s}{2})}(2\alpha-1-Q).$$
Here, we used
\begin{equation*}
\int_{\rn}\psi_\alpha(\theta) \s\varphi dx=0\quad \mbox{for any radially symmetric function}~\varphi\in C_c^\infty(\rn) ,
\end{equation*}
and
\begin{equation*}
\s\log\frac{1}{|x|^{2s}}=A_{n,s}\frac{1}{|x|^{2s}}
=2^{2s}\dfrac{\Gamma(\frac{n}{2})\Gamma(1+s)}{\Gamma(\frac{n-2s}{2})}
\frac{1}{|x|^{2s}}.
\end{equation*}
Then,  we derive that
\begin{equation*}
0=\int_{\rn}(  e^{-\left(\psi_{\alpha}(\theta) -\psi_{\alpha-1}(\theta) \right) } -e^{-\left(\psi_{\alpha+1}(\theta) -\psi_{\alpha}(\theta) \right) }-\mathcal A_{n,s,\alpha,Q})\frac{\varphi}{|x|^{2s}}dx
\end{equation*}
Therefore,
\begin{equation*}
0=\int_0^\infty r^{n-1-2s}\varphi (r)\int_{\mathrm{S}^{n-1}} (  e^{-\left(\psi_{\alpha}(\theta) -\psi_{\alpha-1}(\theta) \right) } -e^{-\left(\psi_{\alpha+1}(\theta) -\psi_{\alpha}(\theta) \right) }-\mathcal A_{n,s,\alpha,Q})d\theta dr,
\end{equation*}
which implies
\begin{equation}\label{integ}
\int_{\mathbb{S}^{n-1}}  e^{-\left(\psi_{\alpha}(\theta) -\psi_{\alpha-1}(\theta) \right) } -e^{-\left(\psi_{\alpha+1}(\theta) -\psi_{\alpha}(\theta) \right) } d\theta=\mathcal  A_{n,s,\alpha,Q} |\b|.
\end{equation}
 Applying inductions arguments starting with $\alpha=1$, we get
\begin{equation}\label{integ}
\int_{\mathbb{S}^{n-1}}  e^{-\left(\psi_{\alpha+1}(\theta) -\psi_{\alpha}(\theta) \right) } d\theta= \lambda_{n,s,\alpha,Q}|\b|,
\end{equation}
 for all $\alpha$ when
 \begin{equation}
 \lambda_{n,s,\alpha,Q} =  2^{2s-1}\dfrac{\Gamma(\frac{n}{2})\Gamma(1+s)}{\Gamma(\frac{n-2s}{2})}\alpha(Q-\alpha).
 \end{equation}
 We now set a radially symmetric smooth cut-off function
\begin{equation*}
\eta(x)=\begin{cases}
1, \qquad &\mathrm{for}~|x|\leq 1,\\
\\
0, &\mathrm{for}~|x|\geq2,
\end{cases}
\end{equation*}
and set
$$\eta_\ep(x)=\left(1-\eta\left(\frac{2x}{\ep}\right)\right)\eta(\ep x).$$
It is not difficult to see that $\eta_\ep=1$ for $\ep<r<\ep^{-1}$ and $\eta_\ep=0$ for either $r<\frac{\ep}{2}$ or $r>\frac{2}{\ep}$. We test the stability condition \eqref{stability} on the function $\phi_\alpha(x)=c_\alpha r^{-\frac{n-2s}{2}}\eta_\ep(r)$ where $c_\alpha$ is a constant depending on $\alpha$ satisfying $\sum_{\alpha} c_\alpha=0$. Let $z=rt$ and note that
\begin{equation*}
\begin{aligned}
\int_{\rn}\frac{\phi_\alpha(x)-\phi_\alpha(z)}{|x-z|^{n+2s}}dz
&= c_\alpha r^{-\frac n2-s}\int_0^\infty\int_{\b}\frac{\eta_{\ep}(r)-t^{-\frac{n-2s}{2}}\eta_{\ep}(rt)}{(t^2+1-2t\langle \theta,\omega\rangle)^{\frac{n+2s}{2}}}t^{n-1}dtd\omega\\
&= c_\alpha r^{-\frac n2-s}\eta_{\ep}(r)\int_0^{\infty}\int_{\b}\frac{1-t^{-\frac{n-2s}{2}}}
{(t^2+1-2t\langle\theta,\omega\rangle)^{\frac{n+2s}{2}}}t^{n-1}dtd\omega\\
&\quad +c_\alpha r^{-\frac n2-s}\int_0^{\infty}\int_{\b}\frac{t^{n-1-\frac{n-2s}{2}}(\eta_{\ep}(r)-\eta_{\ep}(rt))}
{(t^2+1-2t\langle\theta,\omega\rangle)^{\frac{n+2s}{2}}}dtd\omega.
\end{aligned}
\end{equation*}
It is known that
\begin{equation*}
\Lambda_{n,s}=C_{n,s}\int_0^{\infty}\int_{\b}\frac{1-t^{-\frac{n-2s}{2}}}
{(t^2+1-2t\langle\theta,\omega\rangle)^{\frac{n+2s}{2}}}t^{n-1}dtd\omega,
\end{equation*}
where
\begin{equation}
 \Lambda_{n,s}:=2^{2s}\frac{ \Gamma(\frac{n+2s}{4})^2  }{ \Gamma(\frac{n-2s}{4})^2}.
 \end{equation}
Therefore,
\begin{equation*}
\begin{aligned}
C_{n,s}\int_{\rn}\frac{\phi_\alpha(x)-\phi_\alpha(z)}{|x-z|^{n+2s}}dz
=~&c_\alpha C_{n,s} r^{-\frac n2-s}\int_0^{\infty}\int_{\b}\frac{t^{n-1-\frac{n-2s}{2}}(\eta_{\ep}(r)-\eta_{\ep}(rt))}
{(t^2+1-2t\langle\theta,\omega\rangle)^{\frac{n+2s}{2}}}dtd\omega\\
&+c_\alpha \Lambda_{n,s}r^{-\frac n2-s}\eta_{\ep}(r).
\end{aligned}
\end{equation*}
Based on the above computations, we compute the left-hand side of the stability inequality \eqref{stability},
\begin{equation}
\label{3.exp-1}
\begin{aligned}
&\frac{C_{n,s}}{2}\int_{\rn}\int_{\rn}\frac{(\phi_\alpha(x)-\phi_\alpha(z))^2}{|x-z|^{n+2s}}dxdz\\
&=C_{n,s}\int_{\rn}\int_{\rn}\frac{(\phi_\alpha(x)-\phi_\alpha(z))\phi_\alpha(x)}{|x-z|^{n+2s}}dxdz\\
&=c_\alpha ^2 C_{n,s}\int_0^\infty\left[\int_0^\infty r^{-1}\eta_{\ep}(r)(\eta_{\ep}(r)-\eta_{\ep}(rt))dr\right]
\int_{\b}\int_{\b}\frac{t^{n-1-\frac{n-2s}{2}}}
{(t^2+1-2t\langle\theta,\omega\rangle)^{\frac{n+2s}{2}}}d\omega d\theta dt\\
&\quad+c_\alpha ^2 \Lambda_{n,s}|\b|\int_0^\infty r^{-1}\eta_{\ep}^2(r)dr.
\end{aligned}
\end{equation}
We compute the right-hand side of  the stability inequality \eqref{stability} for the test function $\phi_\alpha(x)=c_\alpha r^{-\frac{n}{2}+s}\eta_{\ep}(r)$ and $f_\alpha=\psi_\alpha(\theta) + (2\alpha-Q-1)s \log r$,
\begin{equation}
\label{3.exp-2}
\begin{aligned}
\int_{\mathbb R^n}  e^{-(f_\alpha-f_{\alpha-1})} (\phi_\alpha - \phi_{\alpha-1})^2
=& \int_0^\infty\int_{\b} \eta_{\ep}^2(r) r^{-2s}r^{-(n-2s)} r^{n-1} e^{-(\psi_\alpha-\psi_{\alpha-1})} (c_\alpha - c_{\alpha-1})^2 drd\theta\\
=& \int_0^\infty r^{-1}\eta^2_{\ep}(r)dr \int_{\b} e^{-(\psi_\alpha-\psi_{\alpha-1})} (c_\alpha - c_{\alpha-1})^2 d\theta.
\end{aligned}
\end{equation}
From the definition of the function $\eta_{\ep}$, we have
\begin{equation*}
\int_0^\infty r^{-1}\eta_{\ep}^2(r)dr=\log\frac{2}{\ep}+O(1).
\end{equation*}
One can see that both the first term on the right-hand side of \eqref{3.exp-1} and the right-hand side of \eqref{3.exp-2} carry the term $\int_0^\infty r^{-1}\eta_{\ep}^2(r)dr$ and it tends to $\infty$ as $\ep\to0$. Next, we claim that
\begin{equation}
\label{3.exp-3}
g_{\ep}(t):=\int_0^\infty r^{-1}\eta_{\ep}(r)(\eta_{\ep}(r)-\eta_{\ep}(rt))dr =O(\log t).
\end{equation}
From the definition of $\eta_{\ep},$ we have
\begin{equation*}
g_{\ep}(t)=\int_{\frac{\ep}{2}}^{\frac{2}{\ep}}r^{-1}\eta_{\ep}(r)(\eta_{\ep}(r)-\eta_{\ep}(rt))dr.
\end{equation*}
Notice that
\begin{equation*}
\eta_{\ep}(rt)=\begin{cases}
1, \quad &\mathrm{for}~\frac{\ep}{t}<r<\frac{1}{t\ep},\\
\\
0, \quad &\mathrm{for~either}~r<\frac{\ep}{2t}~\mathrm{or}~r>\frac{2}{t\ep}.
\end{cases}
\end{equation*}
Now we consider various ranges of value of $t\in(0,\infty)$ to establish the claim \eqref{3.exp-3}. Notice that
\begin{equation*}
g_{\ep}(t)\approx\begin{cases}
-\int_{\frac{\ep}{2}}^{\frac{2}{t\ep}}r^{-1}dr+\int_{\frac{\ep}{2}}^{\frac{2}{\ep}}r^{-1}dr\approx \log\ep=O(\log t),\quad~&\mathrm{if}~\frac{1}{t\ep}<\ep,\\
\\
-\int_{\frac{\ep}{2}}^{\ep}r^{-1}dr+\int_{\frac{1}{\ep t}}^{\frac{2}{\ep}}r^{-1}dr\approx \log t, \quad &\mathrm{if}~\frac{\ep}{t}<\ep<\frac{1}{\ep t},\\
\\
\int_{\frac{\ep}{2}}^{\frac{\ep}{t}}r^{-1}dr-\int_{\frac{1}{\ep}}^{\frac{2}{\ep }}r^{-1}dr\approx \log t,\quad & \mathrm{if}~\ep<\frac{\ep}{t}<\frac{1}{\ep},\\
\\
\int_{\frac{\ep}{2}}^{\frac{2}{\ep}}r^{-1}dr-\int_{\frac{\ep}{2t}}^{\frac{2\ep}{t}}r^{-1}dr\approx \log\ep=O(\log t),
\quad &\mathrm{if}~\frac{1}{\ep}<\frac{\ep}{t}.
\end{cases}
\end{equation*}
The other cases can be treated similarly. From this one can see that
\begin{equation*}
\begin{aligned}
&\int_0^{\infty}\left[\int_0^\infty r^{-1}\eta_{\ep}(r)(\eta_{\ep}(r)-\eta_{\ep}(rt))\right]\int_{\b}\int_{\b}
\dfrac{t^{n-1-\frac{n-2s}{2}}}
{(t^2+1-2t\langle\theta,\omega\rangle)^{\frac{n+2s}{2}}}d\omega d\theta dt\\
&\approx \int_0^\infty\int_{\b}\int_{\b}\dfrac{t^{n-1-\frac{n-2s}{2}}\log t}
{(t^2+1-2t\langle\theta,\omega\rangle)^{\frac{n+2s}{2}}}d\omega d\theta dt = O(1).
\end{aligned}
\end{equation*}
Collecting the higher order term $(\log\ep)$, we get
\begin{equation}
\label{Ans3}
\sum_{\alpha=1}^{Q-1} \int_{\b} e^{-(\psi_{\alpha+1}-\psi_{\alpha})} d\theta (c_{\alpha+1} - c_{\alpha})^2  \le \Lambda_{n,s} |\b| \sum_{\alpha=1}^Q c_\alpha ^2.
\end{equation}
Combining \eqref{integ} and \eqref{Ans3}, we conclude that
\begin{equation*}
2^{2s-1}\dfrac{\Gamma(\frac{n}{2})\Gamma(1+s)}{\Gamma(\frac{n-2s}{2})} \sum_{\alpha=1}^{Q-1}  \alpha(Q-\alpha) (c_{\alpha+1} - c_{\alpha})^2  \le 2^{2s}\frac{ \Gamma(\frac{n+2s}{4})^2  }{ \Gamma(\frac{n-2s}{4})^2}
 \sum_{\alpha=1}^Q c_\alpha ^2.
\end{equation*}
Applying the arguments in \cite{kwang}, this contradicts \eqref{assum3}. Therefore, such homogeneous solution does not exist and we finish the proof.	
\end{proof}

\section{Integral Estimates}\label{secen}
In this section we use the notion of stability to derive  energy estimates on $V_\alpha=e^{-(f_{\alpha+1}-f_\alpha)}$ and  to develop the integral representation formula of $f_\alpha$.
\begin{lemma}
\label{le4.1}
Let $\bff=(f_1,\cdots,f_Q)$ be a solution to \eqref{main} for some $n>2s$. Suppose that $\bff$ is stable outside a compact set of $\rn$. Then
\begin{equation}
\label{4.1}
\int_{B_r}V_\alpha dx\leq Cr^{n-2s},\quad \forall r\geq1,~\alpha=1,\cdots,Q.
\end{equation}
\end{lemma}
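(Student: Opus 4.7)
The plan is to test the stability inequality \eqref{stability} against vectors of the form $\phi_\alpha = c_\alpha \zeta_r$, where $(c_\alpha)_{\alpha=1}^Q$ are scalars chosen to satisfy $\sum_{\alpha=1}^Q c_\alpha = 0$ (to respect the constraint $\sum \phi_\alpha = 0$) while $c_{\alpha+1}-c_\alpha \neq 0$ for every $\alpha = 1,\dots,Q-1$, and $\zeta_r$ is a radial cutoff of scale $r$ supported in $B_{2r}\setminus\overline{B_{R_0}}$. The natural choice is $c_\alpha := \alpha - (Q+1)/2$, for which $\sum c_\alpha = 0$ and $c_{\alpha+1}-c_\alpha = 1$, and clearly $\sum_\alpha c_\alpha^2$ is a finite constant depending only on $Q$.

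For the cutoff I would fix $\eta \in C_c^\infty(\mathbb R^n)$ radial with $\eta \equiv 1$ on $B_1$ and $\eta \equiv 0$ off $B_2$, and $\tilde\eta \in C^\infty(\mathbb R^n)$ vanishing on $B_{R_0}$ and equal to $1$ off $B_{2R_0}$. Setting $\zeta_r(x) := \eta(x/r)\,\tilde\eta(x)$ yields $\zeta_r \in C_c^\infty(\mathbb R^n\setminus \overline{B_{R_0}})$, with $\zeta_r = 1$ on $B_r\setminus B_{2R_0}$ and $\operatorname{supp}\zeta_r \subset B_{2r}$. A standard scaling argument for the Gagliardo seminorm gives $\int_{\mathbb R^n} |(-\Delta)^{s/2}\zeta_r|^2 \, dx \leq C r^{n-2s}$ for all $r \geq 4R_0$, with $C$ independent of $r$.

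Plugging $\phi_\alpha = c_\alpha \zeta_r$ into \eqref{stability} turns the left-hand side into $(\sum_\alpha c_\alpha^2)\int |(-\Delta)^{s/2}\zeta_r|^2$ and the right-hand side into $\sum_{\alpha=1}^{Q-1} (c_{\alpha+1}-c_\alpha)^2 \int V_\alpha \zeta_r^2$, where the $\alpha=1$ index-shift boundary term (which would involve $V_0 = e^{-(f_1 - f_0)}$) vanishes because $f_0 = -\infty$. Consequently
\begin{equation*}
\sum_{\alpha=1}^{Q-1} \int_{\mathbb R^n} V_\alpha \, \zeta_r^2 \, dx \leq C \, r^{n-2s}.
\end{equation*}
Since each $V_\alpha \geq 0$ and $\zeta_r \equiv 1$ on $B_r\setminus B_{2R_0}$, this produces $\int_{B_r\setminus B_{2R_0}} V_\alpha \leq C r^{n-2s}$ for each $\alpha = 1,\dots,Q-1$. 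The contribution from the fixed compact set $\overline{B_{2R_0}}$ is an $r$-independent constant (since $f_\alpha \in C^{2\gamma}$ makes $V_\alpha$ bounded on compacts), so $\int_{B_r} V_\alpha \leq C r^{n-2s}$ follows for $r \geq 4R_0$; the range $r\in[1,4R_0]$ is absorbed by the same compact bound, and $\alpha=Q$ is trivial since $V_Q \equiv 0$.

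The only non-bookkeeping ingredient is the cutoff estimate $\int |(-\Delta)^{s/2}\zeta_r|^2 \lesssim r^{n-2s}$. This is classical for the single-scale bump $\eta(\cdot/r)$ via the scaling identity $(-\Delta)^{s/2}[\eta(\cdot/r)](x) = r^{-s}\bigl[(-\Delta)^{s/2}\eta\bigr](x/r)$; the extra factor $\tilde\eta$ contributes only an $r$-independent error because it is supported in a fixed compact set and smooth. Beyond this, the Toda-specific structure reduces to the elementary algebraic fact that the linear functionals $c_\alpha \mapsto c_{\alpha+1}-c_\alpha$ can be made simultaneously nonzero subject to $\sum c_\alpha = 0$, which is trivial for $Q \geq 2$. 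I do not anticipate a serious obstacle.
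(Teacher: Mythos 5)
Your proof is correct and uses essentially the same mechanism as the paper: test the stability inequality against cutoffs $\phi_\alpha = c_\alpha\zeta_r$ with $\sum_\alpha c_\alpha = 0$ and invoke the $r^{n-2s}$ scaling of the Gagliardo seminorm. The only cosmetic difference is that the paper fixes one $\alpha$, takes $\phi_\alpha=-\phi_{\alpha+1}=\eta_R\varphi(\cdot/r)$ and all other $\phi_\beta=0$, and repeats for each $\alpha$, whereas you pick a single universal vector $c_\alpha = \alpha-(Q+1)/2$ with unit gaps and obtain all $\alpha$ at once by nonnegativity of each summand $V_\alpha\zeta_r^2$; both are valid and of the same depth.
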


\begin{proof}
Let $R\gg1$ be a number such that $u$ is stable on $\rn\setminus B_R$. We define two smooth cut-off functions $\eta_R$ and $\varphi$ in $\rn$ such that
\begin{align*}
\eta_R(x)=\left\{\begin{array}{ll}
0\quad&\text{for }|x|\leq R\\
\\
1\quad&\text{for }|x|\geq 2R
\end{array}\right. ,\quad
\varphi(x)=\left\{\begin{array}{ll}
1\quad&\text{for }|x|\leq 1\\
\\
0\quad&\text{for }|x|\geq 2  \end{array}\right..
\end{align*}
Setting $\phi_\alpha=-\phi_{\alpha+1}=\eta_R(x)\varphi(\frac{x}{r})$ with $r\geq1$ and other $\phi_\beta$ to be zero if $\beta\neq\alpha,\alpha+1$. It is easy to see that
$$\sum_{\beta=1}^Q\phi_\beta=0.$$
Then $(\phi_1,\cdots,\phi_Q)$ is a good test function for the stability condition \eqref{stability}. Hence,
\begin{equation*}
\int_{B_r}V_\alpha dx\leq C+\int_{\rn}\left(|\s\phi_\alpha|^2+|\s\phi_{\alpha+1}|^2\right)dx
\leq C+Cr^{n-2s}\leq Cr^{n-2s},
\end{equation*}
where $n>2s$ and $r\geq1$ is used.
\end{proof}
As a direct consequence of the above lemma, we have the following estimate.
\begin{corollary}
\label{cr4.1}
Suppose $n>2s$ and $\bff=(f_1,\cdots,f_Q)$ is a solution of \eqref{main} which is stable outside a compact set. Then there exists $C>0$ such that
\begin{equation}
\label{4.2}
\int_{B_r}V_\alpha^\l\leq Cr^{n-2s}\quad \forall \l\geq1,~r\geq1,~\alpha=1,\cdots,Q , 
\end{equation}
where $V_\alpha^\l=e^{-(\fb^\l-\f^\l)}$.
\end{corollary}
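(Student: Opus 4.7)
The plan is to reduce the claim directly to Lemma \ref{le4.1} via the natural scaling invariance of the fractional Toda system. Because the fractional Laplacian satisfies $(-\Delta)^s[u(\lambda\,\cdot)](x)=\lambda^{2s}((-\Delta)^s u)(\lambda x)$, the rescaled family
\[
f_\alpha^\lambda(x):=f_\alpha(\lambda x)+(Q+1-2\alpha)s\log\lambda,\qquad \lambda\geq 1,
\]
solves \eqref{main} again; the linear-in-$\alpha$ logarithmic shift is exactly what is needed so that the differences $f_{\alpha+1}^\lambda-f_\alpha^\lambda=f_{\alpha+1}(\lambda\,\cdot)-f_\alpha(\lambda\,\cdot)-2s\log\lambda$ produce an extra factor $\lambda^{2s}$ to balance the LHS. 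Moreover, $\sum_\alpha f_\alpha^\lambda=0$ is preserved by the symmetric choice of shifts.

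From the scaling I directly compute
\[
V_\alpha^\lambda(x)=e^{-(f_{\alpha+1}^\lambda(x)-f_\alpha^\lambda(x))}=\lambda^{2s}\,V_\alpha(\lambda x),
\]
so changing variables $y=\lambda x$ gives
\[
\int_{B_r}V_\alpha^\lambda\,dx=\lambda^{2s-n}\int_{B_{\lambda r}}V_\alpha(y)\,dy.
\]
Since $\lambda,r\geq 1$ we have $\lambda r\geq 1$, so Lemma \ref{le4.1} (applied to the original solution $\mathbf{f}$ at radius $\lambda r$) bounds the right-hand integral by $C(\lambda r)^{n-2s}$. The factors of $\lambda$ cancel and the desired estimate $\int_{B_r}V_\alpha^\lambda\leq Cr^{n-2s}$ follows, with the same constant $C$ as in Lemma \ref{le4.1}, independent of both $\lambda$ and $r$.

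There is no real obstacle here beyond book-keeping; the only subtle point is choosing the logarithmic shifts so that the rescaled tuple again solves \eqref{main} (this fixes the exponent $(Q+1-2\alpha)s$, consistent with the homogeneous profile $(2\alpha-Q-1)s\log r$ that appears throughout Section \ref{sechs} and in the monotonicity formula \eqref{energyEs1}). Once the scaling identity $V_\alpha^\lambda(x)=\lambda^{2s}V_\alpha(\lambda x)$ is in hand, the corollary is an immediate rescaled form of Lemma \ref{le4.1}.
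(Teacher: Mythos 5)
Your proof is correct and is exactly the argument the paper leaves implicit when it states the corollary is ``a direct consequence of the above lemma'': the shift $(Q+1-2\alpha)s\log\lambda$ is precisely the one that makes $\mathbf{f}^\lambda$ solve \eqref{main} again, yielding $V_\alpha^\lambda(x)=\lambda^{2s}V_\alpha(\lambda x)$, after which a change of variables and Lemma \ref{le4.1} applied to the original $\mathbf{f}$ at radius $\lambda r\ge 1$ give the bound with the $\lambda$-powers cancelling. Your choice to invoke Lemma \ref{le4.1} for $\mathbf{f}$ directly (rather than re-proving stability for $\mathbf{f}^\lambda$) is a clean way to sidestep any bookkeeping about how the stability radius rescales.
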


Considering above decay estimate on $V_\alpha^\l$, together with the arguments applied in \cite[Lemma 2.3]{hy},  we conclude the following estimate.
\begin{lemma}
\label{le4.2}
For $\delta>0$ there exists $C=C(\delta)>0$ such that
$$\int_{\rn}\frac{V_\alpha^\l}{1+|x|^{n-2s+\delta}}dx\leq C,\quad \forall \l\geq 1,~\alpha=1,\cdots,Q.$$
\end{lemma}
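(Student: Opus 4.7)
The plan is to obtain the weighted integrability bound by a straightforward dyadic decomposition of $\mathbb{R}^n$, using the uniform volume growth estimate from Corollary~\ref{cr4.1} as the only real input. Because Corollary~\ref{cr4.1} already provides $\int_{B_r} V_\alpha^\lambda\,dx \leq Cr^{n-2s}$ for all $r\geq 1$ and $\lambda\geq 1$ with $C$ independent of $\lambda$, the decay $|x|^{-(n-2s+\delta)}$ should give an extra factor $|x|^{-\delta}$ on each annulus, which is summable.

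More concretely, I would first split the integral as
\begin{equation*}
\int_{\mathbb{R}^n}\frac{V_\alpha^\lambda}{1+|x|^{n-2s+\delta}}\,dx
= \int_{B_1}\frac{V_\alpha^\lambda}{1+|x|^{n-2s+\delta}}\,dx + \sum_{k=0}^{\infty}\int_{A_k}\frac{V_\alpha^\lambda}{1+|x|^{n-2s+\delta}}\,dx,
\end{equation*}
where $A_k := B_{2^{k+1}}\setminus B_{2^k}$. For the ball $B_1$, I would simply bound the weight $(1+|x|^{n-2s+\delta})^{-1}\leq 1$ and apply Corollary~\ref{cr4.1} with $r=1$ to get a $\lambda$-uniform bound. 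For each annulus $A_k$, I would use $|x|\geq 2^k$ to estimate $1+|x|^{n-2s+\delta}\geq 2^{k(n-2s+\delta)}$, so that
\begin{equation*}
\int_{A_k}\frac{V_\alpha^\lambda}{1+|x|^{n-2s+\delta}}\,dx
\leq 2^{-k(n-2s+\delta)}\int_{B_{2^{k+1}}} V_\alpha^\lambda\,dx
\leq C\,2^{-k(n-2s+\delta)}\,(2^{k+1})^{n-2s}
\leq C'\,2^{-k\delta},
\end{equation*}
by Corollary~\ref{cr4.1} applied with $r=2^{k+1}\geq 1$. Summing the geometric series $\sum_{k\geq 0} 2^{-k\delta}$ (convergent since $\delta>0$) yields a finite constant $C(\delta)$ independent of $\lambda\geq 1$ and $\alpha$.

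Since all the constants produced in the above steps come from Corollary~\ref{cr4.1} (whose right-hand side is $\lambda$-uniform) and from the geometric series, the desired estimate follows. There is no real obstacle here; the only point to watch is that the growth estimate $\int_{B_r} V_\alpha^\lambda \leq C r^{n-2s}$ is stated for $r\geq 1$, so the region $|x|<1$ must be treated separately, but on this small region the weight is bounded below by a positive constant and the same corollary gives the bound.
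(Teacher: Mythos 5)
Your proof is correct and uses the standard dyadic decomposition based on Corollary~\ref{cr4.1}, which is exactly the route the paper intends (the paper itself only cites Lemma~2.3 of Hyder--Yang rather than writing out the argument). The computation on each annulus is accurate, the resulting geometric series converges for $\delta>0$, the constants inherit $\lambda$-uniformity from Corollary~\ref{cr4.1}, and the region $|x|<1$ is handled correctly.
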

For any $\alpha=1,\cdots,Q-1$, we set
\begin{equation*}
u_\alpha^\lambda :=f_\alpha^\lambda - f_{\alpha+1}^\lambda.
\end{equation*}
From equation \eqref{main},  we have
\begin{equation}
\label{4.3}
\s u_\alpha^\l=2e^{\f^\l-\fb^\l}-e^{\fa^\l-\f^\l}-e^{\fb^\l-f_{\alpha+2}^\l}
=2V_\alpha^\l-V_{\alpha-1}^\l-V_{\alpha+1}^\l,
\end{equation}
where $V_{0}^\l\equiv V_{Q}^\l\equiv 0$. Now, let
\begin{equation}
\label{4.4}
v_\alpha^\l:= c(n,s)\int_{\rn}\left(\frac{1}{|x-z|^{n-2s}}-\frac{1}{(1+|z|)^{n-2s}}\right)
\left(2V_\alpha^\l(z)-V_{\alpha-1}^\l(z)-V_{\alpha+1}^\l(z)\right)dz,
\end{equation}
where $c(n,s)$ is chosen such that
$$c(n,s)\s\frac{1}{|x-z|^{n-2s}}=\delta(x-z).$$
It is straightforward to notice that $v_{\alpha}^\l\in L_{\mathrm{loc}}^1(\rn)$. In what follows we show that $v_{\alpha}^\l\in L_s(\rn)$.

\begin{lemma}
\label{le4.3}
We have
\begin{equation}
\label{4.5}
\int_{\rn}\frac{|v_\alpha^\l|}{1+|x|^{n+2s}}dx\leq C,\quad \forall\l\geq1.
\end{equation}
\end{lemma}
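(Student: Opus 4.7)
The plan is to exchange the order of integration via Fubini--Tonelli, reducing matters to a pointwise estimate on a kernel integral, and then to invoke Lemma \ref{le4.2}. Writing $W(z):=2V_\alpha^\l(z)+V_{\alpha-1}^\l(z)+V_{\alpha+1}^\l(z)$ and moving the absolute value inside \eqref{4.4}, one obtains
\[
\int_{\rn}\frac{|v_\alpha^\l(x)|}{1+|x|^{n+2s}}\,dx\;\leq\;c(n,s)\int_{\rn}W(z)\,K(z)\,dz,
\]
where
\[
K(z)\;:=\;\int_{\rn}\left|\frac{1}{|x-z|^{n-2s}}-\frac{1}{(1+|z|)^{n-2s}}\right|\frac{dx}{1+|x|^{n+2s}}.
\]
Given the weighted decay supplied by Lemma \ref{le4.2}, it therefore suffices to establish a pointwise bound of the form $K(z)\leq C(1+|z|)^{-(n-2s+\delta_0)}$ for some $\delta_0>0$.

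To obtain such a bound I treat $|z|\leq 2$ by a direct continuity and finiteness argument, and focus on $|z|>2$, splitting the $x$-integral at $|x|=|z|/2$. In the inner region $\{|x|\leq|z|/2\}$, where $|x-z|\asymp|z|$, applying the Mean Value Theorem to $t\mapsto t^{-(n-2s)}$ on the interval between $|x-z|$ and $1+|z|$ yields
\[
\left|\frac{1}{|x-z|^{n-2s}}-\frac{1}{(1+|z|)^{n-2s}}\right|\;\leq\; \frac{C(|x|+1)}{(1+|z|)^{n-2s+1}},
\]
and integration against $(1+|x|^{n+2s})^{-1}$ over $\{|x|\leq|z|/2\}$ gives a contribution bounded by $C(1+|z|)^{-(n-2s+\delta_0)}$ with $\delta_0=\min\{1,2s\}$ (the minimum appears because the moment $\int|x|(1+|x|^{n+2s})^{-1}dx$ may diverge for $s\leq 1/2$, in which case one truncates at $|z|/2$). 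In the outer region $\{|x|>|z|/2\}$, I drop the cancellation and estimate each of the two kernel terms separately, further splitting into $\{|z|/2<|x|<2|z|\}$ (controlling the integrable singularity at $x=z$ via the Riesz-type bound $\int_{B(z,3|z|)}|x-z|^{-(n-2s)}dx\leq C|z|^{2s}$) and $\{|x|>2|z|\}$ (where $|x-z|\asymp|x|$, so the integrand is dominated by $|x|^{-2n}$); a routine computation shows both subregions contribute $\leq C|z|^{-n}$, which is absorbed by the inner-region bound.

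Combining the two regions yields the desired kernel estimate, and then Lemma \ref{le4.2} applied to each of $V_\alpha^\l, V_{\alpha-1}^\l, V_{\alpha+1}^\l$ with $\delta=\delta_0$ bounds $\int W(z)\,K(z)\,dz$ uniformly in $\l\geq 1$. The main obstacle is the inner-region estimate: without exploiting the cancellation between the two kernel pieces via the MVT one would only obtain the critical decay $(1+|z|)^{-(n-2s)}$, which falls exactly short of what Lemma \ref{le4.2} provides. The subtraction of the constant $(1+|z|)^{-(n-2s)}$ in the definition \eqref{4.4} of $v_\alpha^\l$ is precisely the regularization that buys the necessary extra $\delta_0$-worth of decay at infinity in $z$.
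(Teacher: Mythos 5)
Your proof is correct and, in substance, follows the same route the paper takes: reduce to a bound on the kernel
\[
K(z)=\int_{\rn}\Bigl|\,|x-z|^{-(n-2s)}-(1+|z|)^{-(n-2s)}\Bigr|\frac{dx}{1+|x|^{n+2s}},
\]
show that the subtracted constant buys polynomial decay strictly faster than $(1+|z|)^{-(n-2s)}$ via the mean value theorem, and feed the result into Lemma~\ref{le4.2}. The paper, however, does not carry out this kernel estimate explicitly: it first decomposes $v_\alpha^\l=2w_\alpha^\l-w_{\alpha-1}^\l-w_{\alpha+1}^\l$ with each $w_\beta^\l$ built from a single nonnegative density $V_\beta^\l$, and then simply cites the scalar estimate of Hyder--Yang [Lemma~2.4,~\cite{hy}] for the bound $\int_{\rn}|w_\beta^\l|(1+|x|^{n+2s})^{-1}dx\le C$; the triangle inequality then finishes. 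Your version is self-contained (you bundle the three $V$'s into $W$ at the outset and never leave the present paper), which is arguably a gain in transparency; what you lose is brevity, since you re-derive what is being imported. One small imprecision worth fixing: at $s=\tfrac12$ the truncated moment $\int_{|x|\le|z|/2}(|x|+1)(1+|x|^{n+2s})^{-1}dx$ picks up a $\log|z|$ factor, so the claimed $\delta_0=\min\{1,2s\}$ fails by a logarithm at that endpoint. Since Lemma~\ref{le4.2} holds for every $\delta>0$, it suffices to take any $\delta_0<\min\{1,2s\}$, and the logarithm is absorbed; but as written the pointwise kernel bound should be stated with a strict inequality for $\delta_0$ rather than with the minimum.
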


\begin{proof}
Let
\begin{equation}
\label{4.6}
w_\alpha^\l(x):=c(n,s)\int_{\rn}\left(\frac{1}{|x-z|^{n-2s}}-\frac{1}{(1+|z|)^{n-2s}}\right)V_\alpha^\l(z)dz.
\end{equation}
From this and the definition of $v_\alpha^\l$, we have
$$v_\alpha^\l=2w_{\alpha}^\l-w_{\alpha-1}^\l-w_{\alpha+1}^\l.$$
On the other hand, applying the arguments of \cite[Lemma 2.4]{hy} one can conclude  that
\begin{equation*}
\int_{\rn}\frac{|w_{\alpha}^\l|}{1+|x|^{n+2s}}dx\leq C,\quad \forall \l\geq1,~\alpha=1,\cdots,Q.
\end{equation*}
As a direct consequence of this, we get
\begin{equation*}
\int_{\rn}\frac{|v_\alpha^\l|}{1+|x|^{n+2s}}dx\leq \int_{\rn}\frac{2|w_{\alpha}^\l|+|w_{\alpha-1}^\l|+|w_{\alpha+1}^\l|}{1+|x|^{n+2s}}dx\leq C.
\end{equation*}
This  finishes the proof.
\end{proof}
We now derive the following representation formula of $u_\alpha^\l$ that is an essential estimate in this section.
\begin{lemma}
\label{le4.4}
For any $\alpha=1,\cdots,Q-1,$ there exists constant $d_\alpha^\l\in\mathbb{R}$ such that
\begin{equation}
\label{4.7}
u_{\alpha}^\l(x)=c(n,s)\int_{\rn}\left(\frac{1}{|x-z|^{n-2s}}-\frac{1}{(1+|z|)^{n-2s}}\right)\left(2V_\alpha^\l(z)-V_{\alpha-1}^\l(z)-V_{\alpha+1}^\l(z)\right)dz+d_\alpha^\l.
\end{equation}
In addition, $d_\alpha^\l$ is bounded above for any $\l\geq1$ and $\alpha=1,\cdots,Q-1.$
\end{lemma}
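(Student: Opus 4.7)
The plan is to identify $u_\alpha^\lambda$ with the Riesz-potential integral in \eqref{4.7} up to an additive constant by comparing both functions through their common fractional Laplacian and invoking a Liouville theorem for $s$-harmonic functions. Define $v_\alpha^\lambda$ as in \eqref{4.4} and let $h_\alpha^\lambda := u_\alpha^\lambda - v_\alpha^\lambda$. The normalization of $c(n,s)$, together with the integrability provided by Lemma \ref{le4.3}, yields $\s v_\alpha^\lambda = 2V_\alpha^\lambda - V_{\alpha-1}^\lambda - V_{\alpha+1}^\lambda$ in $\rn$ distributionally, while the rescaling of the integrability hypothesis on the $f_\alpha$'s places $u_\alpha^\lambda \in L_s(\rn)$, and \eqref{4.3} says $u_\alpha^\lambda$ satisfies the very same equation. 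Consequently $h_\alpha^\lambda \in L_s(\rn)$ with $\s h_\alpha^\lambda = 0$ in $\rn$, and the fractional Liouville theorem forces $h_\alpha^\lambda \equiv d_\alpha^\lambda$ for some $d_\alpha^\lambda \in \R$, giving the representation \eqref{4.7}.

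It remains to prove that $d_\alpha^\lambda$ is bounded above uniformly for $\lambda \geq 1$. This is the delicate point and, as noted in the introduction, the main technical novelty relative to the scalar Gelfand--Liouville setting: since the combination $2V_\alpha^\lambda - V_{\alpha-1}^\lambda - V_{\alpha+1}^\lambda$ is sign-changing, the pointwise bounds on the Riesz potential used in \cite{hy,fwy} are not available. I would argue via a two-set pigeonhole. First, Corollary \ref{cr4.1} applied on $B_1$, combined with $V_\alpha^\lambda = e^{-u_\alpha^\lambda}$ and Chebyshev's inequality, produces a set $E \subset B_1$ with $|E| \geq \delta_0 > 0$ independent of $\lambda$, on which $u_\alpha^\lambda \leq K_2$. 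Second, from the weighted $L^1$ bound on $v_\alpha^\lambda$ of Lemma \ref{le4.3}, refined by splitting the domain of the $z$-integration into near-field ($|z-x|\leq 1$), intermediate ($1\leq |z-x| \leq |x|$), and far-field ($|z-x|\geq |x|$) regions---and keeping the sign-changing combination grouped together inside each integral to exploit cancellation---one produces, via a pigeonhole over $x \in B_1$, a set $F \subset B_1$ with $|F| \geq |B_1| - \delta_0/2$ on which $|v_\alpha^\lambda| \leq K_1$ uniformly in $\lambda$. Evaluating the identity $d_\alpha^\lambda = u_\alpha^\lambda(x) - v_\alpha^\lambda(x)$ at any $x \in E \cap F$ (which has positive measure by construction) then gives $d_\alpha^\lambda \leq K_1 + K_2$.

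The hardest step will be producing the set $F$ on which $|v_\alpha^\lambda|$ is controlled uniformly in $\lambda$. A term-by-term treatment of the three Riesz potentials $w_{\alpha-1}^\lambda, w_\alpha^\lambda, w_{\alpha+1}^\lambda$ introduced in \eqref{4.6} would be too lossy because each $w_\beta^\lambda$ can be individually unbounded; one must retain $2V_\alpha^\lambda - V_{\alpha-1}^\lambda - V_{\alpha+1}^\lambda$ as a single integrand, so that contributions cancel region by region against the Riesz kernel difference $|x-z|^{2s-n} - (1+|z|)^{2s-n}$. This is precisely the refined estimation alluded to in the introduction---replacing a pointwise bound by boundedness on a positive-measure set---and it is the step that allows the argument for the scalar Gelfand--Liouville equation to be extended to the vector-valued Toda system.
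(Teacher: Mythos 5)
Your proof has a genuine gap at the Liouville step. For an $s$-harmonic function $h$ on $\rn$ with $h\in L_s(\rn)$, the fractional Liouville theorem only yields that $h$ is a polynomial of degree at most one; when $s>1/2$ a nontrivial linear function does lie in $L_s(\rn)$, so constancy does not follow automatically. Ruling out the linear alternative is precisely the main technical content of this lemma, and the paper devotes the bulk of the proof to it. The argument there establishes $\int_{B_{2r}\setminus B_r}|w_\beta^\lambda|\,dx \leq C r^n\log r$ by decomposing the Riesz-potential integral \eqref{4.6} into the regions $|z|\ge|x|^2$, $|z-x|\le|x|/2$ and $|z|\le|x|^2$; Chebyshev's inequality then produces a subset of the annulus $B_{2r}\setminus B_r$ of measure $\geq Cr^n$ on which $|w_\beta^\lambda|\le r^{1/2}$ for all relevant $\beta$. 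If $h_\alpha^\lambda$ were linear, then on a positive-proportion piece of that subset one would have $u_\alpha^\lambda = h_\alpha^\lambda + v_\alpha^\lambda \geq Cr - Cr^{1/2}$, hence $\int_{B_{2r}\setminus B_r} e^{u_\alpha^\lambda}\,dx \geq Cr^n$, contradicting the $O(r^{n-2s})$ bound of Corollary \ref{cr4.1}. Your proposal skips this entirely, so as written the representation formula is not established.

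Your pigeonhole argument for the upper bound on $d_\alpha^\lambda$ is correct in outline and close to what the paper intends, but the refinement you propose --- keeping $2V_\alpha^\lambda-V_{\alpha-1}^\lambda-V_{\alpha+1}^\lambda$ grouped as a single integrand to exploit cancellation --- is unnecessary and not what the paper does. Lemma \ref{le4.3} already gives $\int_{B_1}|v_\alpha^\lambda|\,dx\le C$ uniformly in $\lambda$, and its proof proceeds term-by-term on the $w_\beta^\lambda$; Chebyshev then produces your set $F$ directly, and Corollary \ref{cr4.1} gives the set $E$ on which $e^{u_\alpha^\lambda}=V_\alpha^\lambda$ is controlled. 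The ``positive-measure instead of pointwise'' device you correctly identify as the key novelty is indeed applied term-by-term to the $w_\beta^\lambda$, and it is most essential in the omitted degree-one-polynomial step rather than in the bound on $d_\alpha^\lambda$.
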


\begin{proof}
According to the definition of $v_\alpha^\l$, one can easily see that
$$h_\alpha^\l:=u_\alpha^\l-v_\alpha^\l , $$
is a $s$-harmonic function in $\rn$.	Following ideas in \cite[Lemma 2.4]{h}, applied also in \cite{fwy,hy}, one can show that $h_\alpha^\l$ is either a constant, or a polynomial of degree one.

Next, we shall prove that the difference function $h_\alpha^\l$ must be constant. First, we give an estimation for the term $\int_{B_{2r}\setminus B_r}|w_\alpha^\l|dx$ in the ball $B_r$ with $r$ very large. By \eqref{4.6}, for $|x|$ large enough,  we have
\begin{equation}
\label{4.8}
\begin{aligned}
|w_\alpha^\l(x)|\leq~&C\int_{|z|\geq |x|^2}\left|\frac{1}{|x-z|^{n-2s}}-\frac{1}{(1+|z|)^{n-2s}}\right|V_\alpha^\l(z)dz +C\int_{|z-x|\leq \frac{|x|}{2}}\frac{1}{|x-z|^{n-2s}}V_\alpha^\l (z)dz\\
&+C\int_{|z|\leq |x|^2} \frac{1}{(1+|z|)^{n-2s}}V_\alpha^\l(z)dz,
\end{aligned}
\end{equation}
where we used $|x-z|\geq C|z|$ when $|z-x|\geq\frac{|x|}{2}$. Using Lemma \ref{le4.2},  we obtain
\begin{equation}
\label{4.9}
\begin{aligned}
\int_{|z|\geq |x|^2}\left|\frac{1}{|x-z|^{n-2s}}-\frac{1}{(1+|z|)^{n-2s}}\right|V_\alpha^\l(z)dz\leq~& C\int_{|z|\geq |x|^2}\frac{|x|}{1+|z|^{n-2s+1}}V_\alpha^\l(z) dz\\
\leq~& C\int_{|z|\geq |x|^2}\frac{|x|}{|z|^{\frac12+\delta}}
\frac{1}{1+|z|^{n-2s+\frac12-\delta}}V_\alpha^\l(z) dz\\
\leq~& C\int_{|z|\geq |x|^2}\frac{1}{1+|z|^{n-2s+\frac12-\delta}}V_\alpha^\l(z) dz\leq C,
\end{aligned}
\end{equation}
where $\delta$ is a small positive constant. While for the third term in the right-hand side of \eqref{4.8}, in the light of equation \eqref{4.2} and Lemma \ref{le4.2},  we have
\begin{equation}
\label{4.10}
\begin{aligned}
\int_{|z|\leq |x|^2} \frac{1}{(1+|z|)^{n-2s}}V_\alpha^\l(z)dz~&\leq C+C\left(1+\int_{2\leq|z|\leq|x|^2}\frac{1}{(1+|z|)^{n-2s}}V_\alpha^\l(z)dz\right)\\
~&\leq C+C\sum_{i=1}^{2[\log_2|x|]}\int_{2^i\leq |z|\leq 2^{i+1}}\frac{1}{(1+|z|)^{n-2s}}V_\alpha^\l(z)dz\\
~&\leq C+2C\log|x|.
\end{aligned}
\end{equation}
Combining \eqref{4.8}-\eqref{4.10}, we get
\begin{equation}
\label{4.11}
|w_\alpha^\l(x)|\leq C+C\log|x|+C\int_{|z-x|\leq \frac{|x|}{2}}\frac{1}{|x-z|^{n-2s}}V_\alpha^\l (z)dz\quad\mbox{for}~x~\mbox{large}.
\end{equation}
Then,
\begin{equation}
\label{4.12}
\begin{aligned}
\int_{B_{2r}\setminus B_r}|v_\alpha^\l(x)|dx\leq~&C\int_{B_{2r}\setminus B_r}(|w_\alpha^\l(x)|+|w_{\alpha-1}^\l(x)|+|w_{\alpha+1}^\l(x)|)dx\\
\leq~& C\int_{B_{2r}\setminus B_r}(1+\log(|x|+1))dx
+C\int_{r\leq|x|\leq 2r}\int_{|z-x|\leq\frac{|x|}{2}}\frac{1}{|x-z|^{n-2s}}V_\alpha^\l(z)dzdx\\
\leq~& Cr^n\log r,
\end{aligned}
\end{equation}
where $w_0^\l(x)\equiv w_Q^\l(x)=0.$ As a consequence, for $x\in B_{2r}\setminus B_r$,  we have
\begin{equation}
\label{4.13}
\left|\{x\mid r^{\frac12}\leq|w_\alpha^\l(x)|\}\cap\{x\mid r\leq|x|\leq 2r\}\right|\leq Cr^{n-\frac12}\log r.
\end{equation}
If $h_\alpha^\l$ is a polynomial of degree one, then we can find a subset $E_{r}$ of $B_{2r}\setminus B_r$ with measure greater than $Cr^n$ such that
$$u_\alpha^\l(x)\geq h_\alpha^\l-2|w_\alpha^\l|-|w_{\alpha-1}^\l|-|w_{\alpha+1}^\l|\geq Cr.$$
Therefore,
\begin{equation}
\label{4.14}
\int_{B_{2r}\setminus B_r}e^{u_\alpha^\l}dx\geq Cr^{n},
\end{equation}
which is in contradiction with Corollary \eqref{cr4.1}. Thus, $h_\alpha^\l$ must be constant, denoted by $d_\alpha^\l.$ Furthermore, using the fact that $v_\alpha^\l\in L_{\mathrm{loc}}^1(\rn)$ and applying Corollary \eqref{cr4.1},  one can show that $d_\alpha^\l$ is bounded above for any $\l\geq 1.$ This completes the proof.
\end{proof}

Now we prove a higher-order integrability of the nonlinearity of $V_\alpha$ on the region where $\bff=(f_1,\cdots,f_Q)$ is stable. More precisely, applying the stability inequality with appropriate test functions and using the $s$-extension arguments,  we establish the following $L^p$-estimates. The arguments are motivated by the ones established in \cite{df,CR} for the classical Liouville equation and in \cite{kwang} for the classical Toda system.

\begin{proposition}
\label{pr4.1}	
Let $f_\alpha\in C^{2\gamma}(\rn)\cap L^1(\rn,(1+|x|^{n+2s})dx)$	be a solution to \eqref{main}. Assume that $\bff$ is stable in $\rn\setminus B_R$ for some $R>0$. Then,  for every $p\in[1,\min\{5,1+\frac{n}{2s}\})$ there exists
$C=C(p)>0$ such that for $r$ large
\begin{equation}
\label{4.15}
\int_{B_{2r}\setminus B_r}e^{-p(f_{\alpha+1}-f_{\alpha})}dx\leq Cr^{n-2ps}.
\end{equation}
In particular,
\begin{itemize}
\item[(i)] for $|x|$ large,
\begin{align}
\label{4.16}
\int_{B_{|x|/2}(x)}e^{-p(f_{\alpha+1}(z)-f_{\alpha}(z))}dz
\leq C(p)|x|^{n-2ps}, \quad\forall p\in[1,\min\{5,1+\frac{n}{2s}\}),
\end{align}
\item[(ii)] for $r$ large
\begin{align}
\label{4.17}
\int_{B_r\setminus B_{2R}}e^{-p(f_{\alpha+1}(y)-f_{\alpha}(y))}dx\leq C(p)r^{n-2ps}, \quad\forall p\in[1,\min\{5, \frac{n}{2s}\}).
\end{align}
\end{itemize}	
\end{proposition}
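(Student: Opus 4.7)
The plan is to run a Moser-type bootstrap starting from the base case $p=1$ provided by Corollary \ref{cr4.1}, raising the integrability exponent one step at a time until both thresholds encoded in $\min\{5,1+\frac{n}{2s}\}$ are reached. This follows the scheme of Dancer--Farina \cite{df} for the fractional Gelfand--Liouville equation, suitably adapted to the Toda coupling as in \cite{kwang}.

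The first step is to test the stability inequality \eqref{stability} with the \emph{dipole} choice $\phi_\alpha = g\,\eta$, $\phi_{\alpha+1} = -g\,\eta$, and $\phi_\beta = 0$ for $\beta\notin\{\alpha,\alpha+1\}$, which automatically satisfies the constraint $\sum_\beta\phi_\beta = 0$. Here $\eta\in C_c^\infty(\mathbb R^n\setminus\overline{B_{R_0}})$ is a standard cutoff and $g = V_\alpha^{t/2}$ for a parameter $t>0$ to be optimized. Of the three terms produced on the right-hand side of \eqref{stability}, only the middle one $V_\alpha(\phi_{\alpha+1}-\phi_\alpha)^2$ matters, yielding the clean lower bound $2\int|(-\Delta)^{s/2}(V_\alpha^{t/2}\eta)|^2 \ge 4\int V_\alpha^{t+1}\eta^2$. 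Splitting the Gagliardo seminorm of the product via the algebraic identity $(ab-cd)^2\le 2a^2(b-d)^2+2d^2(a-c)^2$, I reduce the left-hand side to $C\int\eta^2|(-\Delta)^{s/2}V_\alpha^{t/2}|^2$ plus a cutoff error of the form $\int V_\alpha^t|(-\Delta)^{s/2}\eta|^2$, which is tame under a dyadic scaling $\eta(x)=\chi(x/r)$ and the inductive hypothesis.

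The second step is to estimate the core term $\int\eta^2|(-\Delta)^{s/2}V_\alpha^{t/2}|^2$. Writing $V_\alpha^{t/2}=e^{tu_\alpha/2}$ with $u_\alpha=f_\alpha-f_{\alpha+1}$ and passing to the Caffarelli--Silvestre extension \eqref{maine}, I apply a pointwise fractional Kato/chain-rule inequality to obtain, in the weak sense, the bound $|(-\Delta)^{s/2}e^{tu_\alpha/2}|^2 \le C_t\,V_\alpha^t\,(-\Delta)^s u_\alpha$. Substituting the equation $(-\Delta)^s u_\alpha = 2V_\alpha - V_{\alpha-1} - V_{\alpha+1}$ and discarding the nonpositive cross terms $-V_{\alpha\pm 1}V_\alpha^t$, then chaining with the first step, yields the absorption inequality $(4 - 4C_t)\int V_\alpha^{t+1}\eta^2 \le C\int V_\alpha^t|(-\Delta)^{s/2}\eta|^2 + (\text{long-range terms})$. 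The coefficient $4-4C_t$ is strictly positive precisely for $t+1<5$, and this is where the Joseph--Lundgren-type threshold $p<5$ originates. The cutoff-generated errors, after dyadic scaling of $\eta$, contribute $Cr^{n-2s(t+1)}$ provided the $L^t$ bound on $V_\alpha$ already holds at this scale, which forces the second threshold $p<1+n/(2s)$.

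The main obstacle will be the bookkeeping in the absorption step: the Toda coupling simultaneously produces the cross terms $V_{\alpha\pm 1}V_\alpha^t$ whose favorable sign is essential, and the nonlocal splitting of $\|V_\alpha^{t/2}\eta\|_{\dot H^s}$ generates long-range interactions between $\{\eta\neq 1\}$ and the region where $V_\alpha$ may be large, which must be controlled through Lemma \ref{le4.2}. Once \eqref{4.15} is established, the two corollaries follow by elementary covering arguments: for (i), $B_{|x|/2}(x)$ is contained in a single dyadic annulus of scale $|x|$ once $|x|$ is large; for (ii), summing \eqref{4.15} over dyadic annuli $B_{2^{k+1}}\setminus B_{2^k}$ with $2R\le 2^k\le r$ gives, in the range $p<n/(2s)$, a geometric series dominated by its last term $\sim r^{n-2ps}$, which is precisely the claimed bound.
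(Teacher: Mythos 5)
Your high-level strategy matches the paper's: a Moser iteration driven by the stability inequality with dipole test functions $\phi_{\alpha+1}=-\phi_\alpha = V_\alpha^q\varphi$, combined with a chain-rule identity for the exponential nonlinearity, discarding the favorable cross terms $-V_{\alpha\pm 1}V_\alpha^{2q}$, absorbing the leading order, and obtaining the thresholds $p<5$ and $p<1+\frac{n}{2s}$ from the absorption constant and the cutoff error respectively. The endgame via dyadic covering for (i) and (ii) is also the paper's route.

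Where you diverge, and where your write-up has a genuine problem, is in how the chain-rule step is implemented. You claim a \emph{pointwise} fractional Kato/chain-rule inequality
$$|(-\Delta)^{s/2}e^{tu_\alpha/2}|^2 \le C_t\,V_\alpha^t\,(-\Delta)^s u_\alpha.$$
This cannot hold pointwise: the left side is nonnegative, while $(-\Delta)^s u_\alpha = 2V_\alpha - V_{\alpha-1}-V_{\alpha+1}$ changes sign. What is true is an \emph{integrated} Stroock--Varopoulos-type inequality, and indeed with the sharp constant $C_t = t/4$ the arithmetic does give $p = t+1 < 5$. But localizing that integral inequality with cutoffs is precisely the delicate part: after splitting $\|(-\Delta)^{s/2}(V_\alpha^{t/2}\eta)\|^2$ via $(ab-cd)^2\le 2a^2(b-d)^2 + 2d^2(a-c)^2$ and symmetrizing the resulting double integral, one lands not on $\int V_\alpha^t\eta^2\,(-\Delta)^s u_\alpha$ but on a close relative that differs by an extra nonlocal commutator $\frac12\iint\frac{(V_\alpha^t(x)+V_\alpha^t(y))(\eta^2(x)-\eta^2(y))(u_\alpha(x)-u_\alpha(y))}{|x-y|^{n+2s}}$, which has no sign and must be controlled separately. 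Your proposal gestures at this as ``bookkeeping'' but it is actually the crux of the argument.

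The paper sidesteps all of this by working in the Caffarelli--Silvestre extension with the auxiliary object $\overline V_\alpha := e^{-(\overline f_{\alpha+1}-\overline f_\alpha)}$, which solves the degenerate PDE \eqref{mainev} with the explicit source $y^{1-2s}\overline V_\alpha|\nabla(\overline f_\alpha-\overline f_{\alpha+1})|^2$. Multiplying by $\overline V_\alpha^{2q-1}\Phi^2$ and using the \emph{exact} local chain rule $\nabla(\overline f_\alpha-\overline f_{\alpha+1}) = \nabla\overline V_\alpha/\overline V_\alpha$ produces the energy identity \eqref{4.23}, which then combines with the stability bound \eqref{4.24} to leave the clean absorption constant $(2-q)$ — an equality-based computation with classical Caccioppoli-type cutoff errors, no fractional chain-rule constants, and no long-range nonlocal commutators to estimate. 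This is Lemma \ref{le4.5}, and it is the technical heart of the proof you would need to supply. If you want to pursue your intrinsic route you must (a) replace the pointwise claim with the correct integrated Stroock--Varopoulos bound including its sharp constant, and (b) estimate the nonlocal commutator above using the a priori decay from Lemma \ref{le4.2}; otherwise, adopting the extension-based Lemma \ref{le4.5} is the shorter path.
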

The rest of this section is devoted to the proof of Proposition \ref{pr4.1}. Note that if $\bff$ is stable in $\Omega$ then
\begin{equation}
\label{4.18}
\sum_{\alpha}\int_{\r}y^{1-2s}|\nabla\Phi_\alpha|^2dxdy\geq
\kappa_s\sum_{\alpha}\int_{\rn}e^{-(f_\alpha-f_{\alpha-1})}(\phi_\alpha-\phi_{\alpha-1})^2dx,
\end{equation}
for every $\Phi_\alpha\in C_c^\infty(\overline{\r})$ satisfying $\phi_\alpha(\cdot):=\Phi_\alpha(\cdot,0)\in C_c^\infty(\Omega)$ for any $\alpha=1,\cdots,Q.$ Indeed, if $\overline{\phi}_\alpha$ is the $s$-harmonic extension of $\phi_\alpha$, we have
\begin{equation}
\label{4.19}
\begin{aligned}
\sum_\alpha\int_{\r}y^{1-2s}|\nabla\Phi_\alpha|^2dxdy
\geq~&\sum_\alpha\int_{\r}y^{1-2s}|\nabla\overline{\phi}_\alpha|^2dxdy\\
=~&\kappa_s\sum_\alpha\int_{\rn}\phi_\alpha\s\phi_\alpha dx\\
\geq~&\kappa_s\sum_{\alpha}\int_{\rn}e^{-(f_\alpha-f_{\alpha-1})}(\phi_\alpha-\phi_{\alpha-1})^2dx.
\end{aligned}
\end{equation}

In order to derive the higher-order integrability of $V_\alpha$, we need to study the $s$-extension of $V_\alpha$. In the light of  \eqref{maine}, one can show that the $s$-extension $\ov$ verifies the following equation,
\begin{equation}
\label{mainev}
\left\{
\begin{aligned}
\nabla\cdot(y^{1-2s}\nabla \ov)&=y^{1-2s}e^{-(\ofb-\of)}|\nabla(\of-\ofb)|^2&\quad&\text{in $\R^{n+1}_{+}$,}\\
  \ov&=  e^{-(\fb-\f)}&\quad&\text{on $\br$,}\\
-\lim_{y\to0} y^{1-2s}\partial_{y} \ov&= \kappa_sV_\alpha(\s\f-\s\fb)  &\quad&\text{on $\br$.}
\end{aligned}
\right.
\end{equation}

We use the above equation to establish the following lemma that is crucial for the proof of Proposition \ref{pr4.1}.

\begin{lemma}
\label{le4.5}
Let $f_\alpha\in C^{2\gamma}(\rn)\cap L^1(\rn,(1+|x|^{n+2s})dx)$ be a solution to \eqref{main}. Assume that $\bff$ is stable in $\Omega\subset\rn.$ Let $\Phi\in C_c^\infty(\overline{\r})$ be of the form $\Phi(x,y)=\varphi(x)\eta(y)$ for some $\varphi\in C_c^\infty(\Omega)$ and $\eta=1$ for $y\in[0,1]$. Then,  for every $0<q<2$ we have
\begin{equation}
\label{4.21}
\int_{\rn}V_\alpha^{2q+1}\varphi^2dx\leq C\int_{\r}y^{1-2s}\ov^{2q}|\nabla\Phi|^2dxdy+C\left|\int_{\r}\ov^{2q}\nabla\cdot(y^{1-2s}\nabla\Phi^2)dxdy\right|.
\end{equation}
\end{lemma}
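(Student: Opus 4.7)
The plan is to combine two identities: one arising from the extension equation \eqref{mainev} satisfied by $\ov$, and one from the stability inequality \eqref{4.18} applied to a test function built from $\ov^q\Phi$. Both identities will feature the same positive quantity $I:=\int_{\r}y^{1-2s}\ov^{2q}|\nabla(\of-\ofb)|^2\Phi^2\,dxdy$, which I then eliminate between them.

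\emph{Equation side.} Set $W:=\ov^q$. Since $\ov=e^{\of-\ofb}$ one has $|\nabla\ov|^2=\ov^2|\nabla(\of-\ofb)|^2$, and a direct computation from \eqref{mainev} gives
$$\nabla\cdot(y^{1-2s}\nabla W)=q^2\,y^{1-2s}W|\nabla(\of-\ofb)|^2\text{ in }\r,\quad -\lim_{y\to 0}y^{1-2s}\partial_y W=q\kappa_s V_\alpha^q(2V_\alpha-V_{\alpha-1}-V_{\alpha+1}).$$
Multiplying by $W\Phi^2$, integrating over $\r$, using $W\nabla W=\tfrac12\nabla(\ov^{2q})$, and performing a further integration by parts against $\nabla(\Phi^2)$ (whose boundary contribution at $y=0$ vanishes because $\eta\equiv 1$ on $[0,1]$ forces $\partial_y(\Phi^2)|_{y=0}=0$) yields
$$2q^2 I=\tfrac12\int_{\r}\ov^{2q}\nabla\cdot(y^{1-2s}\nabla\Phi^2)\,dxdy+2q\kappa_s\int_{\rn}V_\alpha^{2q+1}\varphi^2\,dx-q\kappa_s\int_{\rn}V_\alpha^{2q}(V_{\alpha-1}+V_{\alpha+1})\varphi^2\,dx.$$

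\emph{Stability side.} In \eqref{4.18} I take $\Phi_\alpha=\ov^q\Phi$, $\Phi_{\alpha+1}=-\ov^q\Phi$, and $\Phi_\beta\equiv 0$ otherwise; the sum-zero constraint is satisfied. The right-hand side of \eqref{4.18} telescopes to $\kappa_s\int(V_{\alpha-1}+4V_\alpha+V_{\alpha+1})V_\alpha^{2q}\varphi^2\,dx$, and expanding
$$|\nabla(\ov^q\Phi)|^2=q^2\ov^{2q}|\nabla(\of-\ofb)|^2\Phi^2+\tfrac12\nabla(\ov^{2q})\cdot\nabla(\Phi^2)+\ov^{2q}|\nabla\Phi|^2$$
followed by integration by parts on the middle term produces
$$\kappa_s\int(V_{\alpha-1}+4V_\alpha+V_{\alpha+1})V_\alpha^{2q}\varphi^2\,dx\le 2q^2 I-\int_{\r}\ov^{2q}\nabla\cdot(y^{1-2s}\nabla\Phi^2)\,dxdy+2\int_{\r}y^{1-2s}\ov^{2q}|\nabla\Phi|^2\,dxdy.$$
Substituting the Step-1 identity into this inequality eliminates $I$. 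The net coefficient of $\int V_\alpha^{2q+1}\varphi^2\,dx$ on the resulting left-hand side is $(4-2q)\kappa_s$, which is strictly positive precisely when $q<2$, while the coefficient of the cross term $\int V_\alpha^{2q}(V_{\alpha-1}+V_{\alpha+1})\varphi^2\,dx$ becomes $(1+q)\kappa_s>0$ and may simply be dropped. This delivers \eqref{4.21} with $C=C(q)\sim(4-2q)^{-1}$.

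\emph{Main obstacle.} The principal technical difficulty is making the above integrations by parts rigorous: since $\ov$ need not be bounded on $\overline{\r}$, the test function $\ov^q\Phi$ may fail to lie in the natural weighted Sobolev space, and the integrals in each step are not a priori finite. The standard remedy is to work throughout with the truncation $\ov_M:=\min(\ov,M)$, where all quantities are finite thanks to the compact support of $\Phi$ and the boundedness of $\ov_M$; since the algebraic coefficient comparison of the final step is preserved at each truncation level, passing $M\to\infty$ by monotone convergence -- using the a priori bounds on $V_\alpha$ furnished by Lemma~\ref{le4.1} and Corollary~\ref{cr4.1} -- yields \eqref{4.21} with the admissible range $0<q<2$ intact.
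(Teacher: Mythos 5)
Your proof is correct and follows essentially the same route as the paper's: multiply the extension equation for $\ov$ (equivalently, derive the PDE for $W=\ov^q$) by $\ov^{2q-1}\Phi^2$ to get one identity, apply the stability inequality \eqref{4.18} with $\Phi_\alpha=-\Phi_{\alpha+1}=\ov^q\Phi$ to get a second, and eliminate the common quadratic term $\int y^{1-2s}\ov^{2q}|\nabla(\of-\ofb)|^2\Phi^2$. The only cosmetic differences are that you retain the cross terms $V_\alpha^{2q}(V_{\alpha-1}+V_{\alpha+1})\varphi^2$ from the stability side before discarding them (the paper drops them already in \eqref{4.24}), and you make explicit the truncation step $\ov_M=\min(\ov,M)$ needed to justify the integrations by parts, a point the paper leaves implicit.
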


\begin{proof}
Multiplying \eqref{mainev} by $\ov^{2q-1}\Phi^2$ and integration by parts leads to
\begin{equation}
\label{4.22}
\begin{aligned}
&(2q-1)\int_{\r}y^{1-2s}|\nabla\ov|^2\ov^{2q-2}\Phi^2dxdy+\int_{\r}y^{1-2s}\ov^{2q}|\nabla(\of-\ofb)|^2\Phi^2dxdy\\
&=\kappa_s\int_{\rn}V_\alpha^{2q}(2V_\alpha-V_{\alpha+1}-V_{\alpha-1})\varphi^2dx+\frac{1}{2q}\int_{\r}\ov^{2q}\nabla\cdot(y^{1-2s}\nabla\Phi^2)dxdy .
\end{aligned}
\end{equation}
We notice that $\nabla(\of-\ofb)=\frac{\nabla\ov}{\ov}$.  Therefore,  \eqref{4.22} can be written as
\begin{equation}
\label{4.23}
\begin{aligned}
2q\int_{\r}y^{1-2s}|\nabla\ov|^2\ov^{2q-2}\Phi^2dxdy=~&\kappa_s\int_{\rn}V_\alpha^{2q}(2V_\alpha-V_{\alpha+1}-V_{\alpha-1})\varphi^2dx\\
&+\frac{1}{2q}\int_{\r}\ov^{2q}\nabla\cdot(y^{1-2s}\nabla\Phi^2)dxdy.
\end{aligned}
\end{equation}
On the other hand, substituting $\phi_{\alpha+1}=-\phi_{\alpha}=V_\alpha^q\varphi$, $\phi_\beta=0$ if $\beta\neq\alpha,\alpha+1$ and $\Phi_\alpha=\phi_\alpha\eta$ into \eqref{4.18} we get
\begin{equation}
\label{4.24}
\begin{aligned}
2\kappa_s\int_{\rn}V_\alpha^{2q+1}\varphi^2dx\leq~& q^2\int_{\r}y^{1-2s}\ov^{2q-2}|\nabla\ov|^2\Phi^2dxdy+\int_{\r}y^{1-2s}\ov^{2q}|\nabla\Phi|^2dxdy\\
~&-\frac12\int_{\r}\ov^{2q}\nabla\cdot(y^{1-2s}\nabla\Phi^2)dxdy .
\end{aligned}
\end{equation}
Combining \eqref{4.23} and \eqref{4.24} for any $q\in[0,2)$ there exists a constant $C(q)$, depending only on $q$, such that for any $\varphi\in C_c^\infty(\Omega)$,
\begin{equation}
\label{4.25}
\int_{\rn}{{V_\alpha^2\varphi^2}}dx\leq C(q)\left(\int_{\r}y^{1-2s}\ov^{2q}|\nabla\Phi|^2dxdy+\left|\int_{\r}\ov^{2q}\nabla\cdot(y^{1-2s}\nabla\Phi^2)dxdy\right|\right).
\end{equation}
This completes the proof.
\end{proof}

Before we give the proof of Proposition \ref{pr4.1}, we present the following $L^1_{\mathrm{loc}}$-estimate for the $s$-extension function $ y^{1-2s}\ov^q$.

\begin{lemma}
\label{lem-2.7}
Let $V_\alpha^q\in L^1( \Omega) $ for some $\Omega\subset \B$. Then $y^{1-2s}\ov^q\in L^1_{\mathrm{loc}}(\Omega\times [0, \infty))$.
\end{lemma}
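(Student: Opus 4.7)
\medskip
\textbf{Proof plan.} My plan is to exploit the pointwise identity $\ov = e^{\of - \ofb}$ (implicit in the derivation of \eqref{mainev}) together with the continuity of the $s$-harmonic extensions and the local integrability of the Muckenhoupt weight $y^{1-2s}$ near the flat boundary. First I would invoke the standing hypothesis $f_\alpha\in C^{2\gamma}(\rn)\cap L^1(\rn,(1+|x|^{n+2s})^{-1}dx)$, under which the Caffarelli--Silvestre Poisson formula
\[
\of(x,y)= c_{n,s}\int_{\rn} \frac{y^{2s}}{(|x-z|^2+y^2)^{(n+2s)/2}} f_\alpha(z)\,dz
\]
is well defined and yields $\of\in C^2(\r)\cap C(\super\r)$ with boundary trace $f_\alpha$. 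Continuity up to $\br$ at a point $(x_0,0)$ would be verified by splitting the integral defining $\of(x,y)-f_\alpha(x_0)$ into a small ball around $x_0$, controlled by the modulus of continuity of $f_\alpha$, and its complement, where the $O(y^{2s})$ decay of the Poisson kernel combined with the weighted integrability of $f_\alpha$ forces the contribution to vanish as $(x,y)\to(x_0,0)$. Consequently $\ov = e^{\of-\ofb}\in C(\super\r)$.

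Next, for an arbitrary compact $K\subset \Omega\times[0,\infty)$, I would enclose it in a product $\overline{B}\times[0,h]$ with $\overline{B}\subset \Omega$ bounded. Continuity of $\ov$ yields the uniform bound $M_K:=\sup_K \ov^q<\infty$, while the assumption $s\in(0,1)$ gives $1-2s>-1$, so the weight $y^{1-2s}$ is integrable on $[0,h]$. Combining these,
\[
\int_K y^{1-2s}\ov^q\,dx\,dy \;\leq\; M_K|\overline{B}|\int_0^h y^{1-2s}\,dy \;=\; \frac{M_K|\overline{B}|\,h^{2-2s}}{2-2s}\;<\;\infty,
\]
which is exactly the claim $y^{1-2s}\ov^q\in L^1_{\mathrm{loc}}(\Omega\times[0,\infty))$.

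The only delicate ingredient will be the continuity of $\of$ up to $\br$, which is a classical consequence of the Poisson representation for continuous functions in the weighted $L^1$ class but worth emphasizing because $f_\alpha$ itself need not be bounded at infinity. The hypothesis $V_\alpha^q\in L^1(\Omega)$ enters only as a compatibility condition: it guarantees that the (continuous) boundary trace of $\ov^q$ on $\Omega$ is locally $L^1$, which is consistent with---indeed subsumed by---the $L^1_{\mathrm{loc}}$ conclusion on $\Omega\times[0,\infty)$ obtained above.
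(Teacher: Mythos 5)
Your argument is valid as a proof of the literal conclusion, but it is genuinely different from the paper's, and the difference matters. You observe (correctly) that $\ov = e^{\of-\ofb}$ is continuous on $\super\r$, hence bounded on compact subsets, and that $y^{1-2s}$ is locally integrable, so the product is in $L^1_{\mathrm{loc}}$. The hypothesis $V_\alpha^q\in L^1(\Omega)$ then appears superfluous, and you dismiss it as a ``compatibility condition.'' That dismissal is the tell that your proof, though sound, captures less than the paper's.

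The paper instead uses the Poisson representation of $\of - \ofb$ on a subdomain and Jensen's inequality applied with respect to the probability measure $P(X,z)\,dz / g(x,z)$ (where $g(x,z)=\int_\Omega P(X,z)\,dz$ is bounded above and below on $\Omega_0\Subset\Omega$, $y\in(0,R)$). This converts the boundary datum $\int_\Omega V_\alpha^q$ directly into the interior estimate
$\int_{\Omega_0} \ov^q(\cdot,y)\,dx \le C\bigl(1+\int_\Omega V_\alpha^q\bigr)$ with $C=C(R,\Omega_0,\Omega)$ independent of $y$, and integration in $y$ finishes. The payoff is that the bound depends only on $\|V_\alpha^q\|_{L^1(\Omega)}$ and not on the modulus of continuity of $\of,\ofb$. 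Your bound, by contrast, is $\sup_K\ov^q$, which is finite but uncontrolled: when the lemma is applied to the rescaled family $\bff^\lambda$ in the blow-down analysis, the sup-norm of $\of^\lambda-\ofb^\lambda$ on a fixed cylinder has no uniform control in $\lambda$, whereas $\int V_\alpha^q$ is precisely the quantity bounded uniformly in $\lambda$ by Corollary \ref{cr4.1} and Proposition \ref{pr4.1}. So the $L^1$ hypothesis is not decorative; it is the input the paper's argument actually uses, and the resulting scale-stable estimate is the content of the lemma even though only the qualitative conclusion is recorded in the statement.
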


\begin{proof}
Let $\Omega_0\Subset\Omega$ be fixed.  Since $f_\alpha\in  L_s(\B)$, for $x\in\Omega_0$ and $y\in(0,R)$ we have
\begin{align*}\of(x,y)-\ofb(x,y)\leq~& C+\int_{\Omega}(\f(z)-\fb(z))P(X,z)dz\\
=~&C+\int_{\Omega}g(x,z)(\f(z)-\fb(z))\frac{P(X,z)dz}{g(x,z)},
\end{align*}
where
$1\geq  g(x,z):=\int_{\Omega}P(X,z)dz\geq C$ for some positive constant $C$ depending on $R$, $\Omega_0$ and $\Omega$ only. Therefore, by Jensen's inequality
\begin{align*}
\int_{\Omega_0}e^{q(\of(x,y)-\ofb(x,y))}dx
&\leq C \int_{\Omega_0}\int_{\Omega}e^{q g(x,z)f_\alpha(z)-f_{\alpha+1}(z)}P(X,z)dzdx\\
&\leq C\int_{\Omega}\max\{e^{q(\f-\fb)},1\}\int_{\Omega_0}P(X,z)dxdz
\\&\leq C+C\int_{\Omega}e^{q(\f(z)-\fb(z))}dz,
\end{align*}
where the constant $C$ depends on $R,~\Omega_0$ and $\Omega$, but not on $y$. Hence,
\begin{align*}
\int_{\Omega_0\times(0,R)} y^{1-2s}e^{q(\of(x,y)-\ofb(x,y))}dxdy \leq
\int_0^R y^{1-2s}\int_{\Omega_0}e^{q(\of(x,y)-\ofb(x,y))}dxdy <\infty.
\end{align*}
This finishes the proof.
\end{proof}

Now we are ready to prove Proposition \ref{pr4.1}.

\begin{proof}[Proof of Proposition \ref{pr4.1}.]
From the H\"older's inequality and Lemma \ref{le4.1},  for $p\in(0,1)$ we get
\begin{equation*}
\int_{B_{2r}\setminus B_r}V_\alpha^pdx\leq \left(\int_{B_{2r}\setminus B_r}V_\alpha dx \right)^p\left(\int_{B_{2r}\setminus B_r} 1dx\right)^{1-p}
\leq Cr^{n-2ps}.
\end{equation*}
Next, we claim that if
\begin{equation}
\label{4.26}
\int_{B_{2r}\setminus B_r}V_\alpha^pdx\leq Cr^{n-2ps}\quad\mathrm{for}~r>2R,
\end{equation}
for some $p\in(0,\min\{\frac{n}{2s},4\})$, then
\begin{equation}
\label{4.27}
\int_{B_{2r}\setminus B_r}V_\alpha^{p+1}dx\leq Cr^{n-2(1+p)s}\quad\mathrm{for}~r>3R.
\end{equation}
By \eqref{4.26}, it is straightforward  to show that
\begin{equation}
\label{4.28}
\int_{B_r\setminus B_{2R}}V_\alpha^pdx\leq Cr^{n-2ps}\quad \text{for  }r> 2R.
\end{equation}
Indeed, for  $r>2R$ of the form $r=2^{N_1}$ with some positive integer $N_1$,  and taking  $N_2$ to be the smallest integer such that $2^{N_2}\geq 2R$,  by  \eqref{4.26} we deduce
\begin{equation}
\label{4.29}
\begin{aligned}
\int_{B_{2^{N_1}}\setminus B_{2R}}V_\alpha^pdx=
&\int_{B_{2^{N_2}}\setminus B_{2R}}V_\alpha^p dx+\sum_{\ell=1}^{N_1-N_2}\int_{B_{2^{N_2+\ell}}\setminus B_{2^{N_1+\ell}}}V_\alpha^p dx\\
\leq & ~C+ C\sum_{\ell=1}^{N_1-N_2}(2^{N_2+\ell})^{n-2p s}
\leq C2^{N_1(n-2p s)},
\end{aligned}
\end{equation}
where we used $n-2p s>0.$ Then using the hypothesis \eqref{4.26},   we derive the following decay estimate
\begin{equation}
\label{4.30}
\begin{aligned}
\int_{|z|\geq r}\frac{V_\alpha^p}{|z|^{n+2s}}dz=
\sum_{i=0}^\infty\int_{2^{i+1}r\geq |z|\geq 2^ir}\frac{V_\alpha^p}{|z|^{n+2s}}dz
\leq \frac{ C}{r^{2s+2p s}}\sum_{i=0}^\infty\frac{1}{2^{(2+2p)si}}
\leq Cr^{-2s-2ps}.
\end{aligned}
\end{equation}
On the other hand, from the Poisson's formula for $|x|>\frac52R$ we have
\begin{align*}
(\of(x,y)-\ofb(x,y)) \leq~& C\frac{y^{2s}}{(R+y)^{n+2s}}\int_{|z|\leq 2R}(\f(z)-\fb(z))^+(z)dz\\
&+\int_{\B}\chi_{\B\setminus B_{2R}}(z)(\f(z)-\fb(z))P(X,z)dz\\
\leq ~&C+\int_{\B}\chi_{\B\setminus B_{2R}}(z)(\f(z)-\fb(z))P(X,z)dz,
\end{align*}
where $\chi_A$ denotes   the characteristic function of a set $A$. Using the Jensen's inequality, we obtain
\begin{align}
\label{4.31}
\ov^p(x,y)\leq C\int_{\B}\left(V_\alpha^{p}\chi_{\B\setminus B_{2R}}(z)+\chi_{B_{2R} }(z) \right)P(X,z)dz \quad\text{for }|x|\geq 3R.
\end{align}
 Integrating both sides of the inequality \eqref{4.31}, with respect to $x$, on $B_{r}\setminus B_{3R}$ for $r>3R$ and $y\in(0,r)$, we get
\begin{align*}
\int_{B_{r}\setminus B_{3R}}\ov^pdx
&\leq C \int_{|z|\leq 2R}\int_{|x|\leq r}P(X,z)dxdz+C\int_{|z|\geq 2r}\int_{|x|\leq r}V_\alpha^pP(X,z)dxdz\\
&\quad+C\int_{2R\leq |z|\leq 2r}V_\alpha^p(z)\int_{|x|\leq r}P(X,z)dxdz \\ &\leq CR^n +Cr^{n+2s}\int_{|z|\geq 2r}\frac{V_\alpha^p(z)}{|z|^{n+2s}}dz+C\int_{2R\leq |z|\leq 2r}V_\alpha^p(z)dz\\
&\leq C+Cr^{n-2ps}\leq Cr^{n-2ps},
\end{align*}
where we used \eqref{4.26}, \eqref{4.29} and \eqref{4.30}.  Now we fix non-negative smooth functions $\varphi$ and $\eta$ on $\B$ and $ [0,\infty)$, respectively, such that
\begin{align*}
\varphi(x)=\left\{\begin{array}{ll}
1 &\quad\text{in } B_{2}\setminus B_1\\
\\
0&\quad\text{in }B_{2/3}\cup B_{3}^c
\end{array}\right.,\quad
\eta(t)=\left\{\begin{array}{ll}
1 &\quad\text{in }[0,1]\\
\\
0&\quad\text{in } [2,\infty)
\end{array}\right..
\end{align*}
For $r>0$, we now set $\Phi_r(x,t)=\varphi(\frac xr)\eta(\frac tr)$. Applying the test function  $\Phi_r$ in Lemma \ref{le4.5} for $r\geq 3R$ and using the fact that  $|\nabla \Phi_r |\leq  \frac Cr$, we conclude
\begin{equation*}
\begin{aligned}
\int_{\r}y^{1-2s}\ov^p|\nabla  \Phi_r |^2dxdy
&\leq Cr^{-2}\int_0^{2r} y^{1-2s}\int_{B_{3r}\setminus B_{2r/3}}\ov^p(x,y)dxdy\\
&\leq Cr^{n-2-2p s}\int_0^{2r}y^{1-2s} dt\\
&\leq Cr^{n-2s(1+p)}.
\end{aligned}
\end{equation*}
With  similar arguments, we have
\begin{equation*}
\left|\int_{\R}\ov^p \nabla\cdot [y^{1-2s}\nabla  \Phi_r^2]dxdt\right|\leq Cr^{n-2s(1+p)}.
\end{equation*}
Therefore, \eqref{4.27} follows from \eqref{4.21} of Lemma \ref{le4.5} as desired. This completes the proof of the claim. Repeating the above arguments finitely many times we get \eqref{4.15} in the light of the fact $V_\alpha=e^{-(\fb-\f)}$.  The estimate \eqref{4.16} follows immediately as well since $B_{|x|/2}(x)\subset B_{2r}\setminus B_{r/2}$ with $r=|x|$. Similarly, from \eqref{4.28}  one can obtain  \eqref{4.17} of Proposition \ref{pr4.1}. This completes the proof.
\end{proof}

\section{Blow-down Analysis}\label{secbd}
In this section, we perform blow-down analysis for the energy functional given by the monotonicity formula \eqref{energyEs1}.  We start with the proof of the latter formula. 

\begin{proof}[Proof of Theorem \ref{thmono1s}] Let
\begin{equation} \label{I}
I(\overline{ \bff },\lambda) =  \lambda^{2s-n} \sum_{\alpha} \left(\frac12\int_{\r\cap B_\lambda^{n+1}} y^{1-2s} \vert\nabla \of\vert^2\;dx\,dy - \kappa_{s} \int_{\br\cap B_\lambda^{n+1}}  e^{-(\of - \ofa)}\;dx\right) . 
\end{equation}
Now for $X\in\r$, define
\begin{equation}\label{ulambdas1}
\of^\lambda(X) : = \of(\lambda X)  - (2\alpha - Q-1)s \log \lambda . 
\end{equation}
Then, $\of^\lambda$ solves \eqref{maine} and in addition
\begin{equation}\label{t3}
I(\overline{ \bff },\lambda) = I(\overline{ \bff }^\lambda,1).
\end{equation}
Taking partial derivatives on $\partial B_1$, we get
\begin{equation} \label{lambdader}
 \partial_r \of^\lambda=\lambda \partial_\lambda \of ^\lambda +(2\alpha-Q-1)s.
\end{equation}
Differentiating the formula \eqref{I} with respect to $\lambda$, we find
$$
\partial_\lambda I(\overline{ \bff },\lambda) = \sum_{\alpha}\int_{\r\cap B_1^{n+1}}y^{1-2s}\nabla \of^\lambda\cdot\nabla\partial_\lambda \of^\lambda dx\,dy
+\kappa_{s}\sum_{\alpha}
\int_{\br\cap B_1^{n+1}}  e^{-(\of^\lambda - \ofa^\lambda)}( \partial_\lambda \of^\lambda -  \partial_\lambda \ofa^\lambda ) dx.
$$
Integrating by parts and then using \eqref{lambdader},
\begin{align*}
\partial_\lambda I(\overline{ \bff },\lambda)  &= \sum_\alpha\int_{\partial B_1^{n+1}\cap\r}y^{1-2s} \partial_r  \of^\lambda \partial_\lambda  \of^\lambda d\sigma
\\&= \sum_\alpha\left(\lambda \int_{\partial B_1^{n+1}\cap\r}y^{1-2s} (\partial_\lambda  \of^\lambda)^2 d\sigma +
(2\alpha-Q-1)s \int_{\partial B_1^{n+1}\cap\r}y^{1-2s}  \partial_\lambda  \of^\lambda  d\sigma\right)\\
&= \sum_\alpha\left[(\lambda \int_{\partial B_1^{n+1}\cap\r} y^{1-2s} (\partial_\lambda  \of^\lambda)^2 d\sigma +
\partial_\lambda \left( (2\alpha-Q-1)s\int_{\partial B_1^{n+1}\cap\r}y^{1-2s} \of^\lambda \;d\sigma\right)\right].
\end{align*}
This implies that
\begin{equation}
\partial_\lambda\left[ I( \overline{ \bff },\lambda)-\sum_\alpha(2\alpha-Q-1)s\int_{\partial B_1^{n+1}\cap\r}y^{1-2s}  \of^\lambda  \;d\sigma\right] = \lambda \int_{\partial B_1^{n+1}\cap\r} y^{1-2s} \sum_\alpha(\partial_\lambda   \of^\lambda)^2 d\sigma .
\end{equation}
Applying the scaling (\ref{ulambdas1}) completes the proof.
\end{proof}

We now analyze the third term in the monotonicity formula.
\begin{lemma}
\label{le5.1}
Let $\of^\l$ be the $s$-harmonic extension of $\f^\l$, then
\begin{equation}
\label{5.1}
\begin{aligned}
\sum_\alpha(Q+1-2\alpha)\int_{\partial B_1^{n+1}\cap\r}y^{1-2s}\of ^\l d\sigma=c_s\sum_{\ell=1}^{[\frac{Q}{2}]}(\ell Q-\ell^2)(d_\ell^\l+d_{Q-\ell}^\l)+O(1)	, 
\end{aligned}
\end{equation}
where $[\tau]$ denotes the integer part of $\tau$, $d_\ell^\l$ is defined in \eqref{4.7} and $c_s$ is a positive finite number given by
$$c_s:=\int_{\partial B_1^{n+1}\cap\r}y^{1-2s}d\sigma.$$
\end{lemma}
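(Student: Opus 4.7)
The plan has three stages: reduce the weighted sum of extensions $\of^\l$ to a combination of the extensions $\overline{u}_\alpha^\l$ of the differences $u_\alpha^\l = f_\alpha^\l - f_{\alpha+1}^\l$ by using the constraint $\sum_\alpha f_\alpha = 0$ and summation by parts; apply the representation formula of Lemma \ref{le4.4} to extract the constants $d_\alpha^\l$; and show that the leftover integral contribution is $O(1)$ uniformly in $\l$.

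For the algebraic reduction, set $A_\alpha := \sum_{\beta=1}^\alpha(Q+1-2\beta) = \alpha(Q-\alpha)$, so that $A_0 = A_Q = 0$. Abel summation then gives
\begin{equation*}
\sum_{\alpha=1}^Q (Q+1-2\alpha)\, f_\alpha \;=\; \sum_{\alpha=1}^{Q-1} \alpha(Q-\alpha)\, u_\alpha,
\end{equation*}
and by linearity of the $s$-harmonic extension the same identity holds at the rescaled level. Invoking Lemma \ref{le4.4} to write $\overline{u}_\alpha^\l = \overline{v}_\alpha^\l + d_\alpha^\l$, multiplying by $y^{1-2s}$, and integrating over $\partial B_1^{n+1}\cap\r$ yields
\begin{equation*}
\sum_\alpha (Q+1-2\alpha)\int_{\partial B_1^{n+1}\cap\r} y^{1-2s}\of^\l\, d\sigma \;=\; c_s\sum_{\alpha=1}^{Q-1}\alpha(Q-\alpha)\, d_\alpha^\l + \sum_{\alpha=1}^{Q-1}\alpha(Q-\alpha)\int_{\partial B_1^{n+1}\cap\r} y^{1-2s}\overline{v}_\alpha^\l\, d\sigma.
\end{equation*}
Pairing indices $\alpha \leftrightarrow Q-\alpha$, under which the coefficient $\alpha(Q-\alpha)$ is invariant, reorganizes the constant sum into $c_s\sum_{\ell=1}^{[Q/2]}\ell(Q-\ell)(d_\ell^\l + d_{Q-\ell}^\l)$; any correction from the self-paired middle index $\ell = Q/2$ (when $Q$ is even) is bounded above by Lemma \ref{le4.4} and hence absorbed into the $O(1)$ remainder.

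The decisive step, and the main obstacle, is the uniform-in-$\l$ estimate
\begin{equation*}
\int_{\partial B_1^{n+1}\cap\r} y^{1-2s}|\overline{v}_\alpha^\l|\, d\sigma \;\le\; C.
\end{equation*}
I would represent $\overline{v}_\alpha^\l$ via the Poisson formula
\begin{equation*}
\overline{v}_\alpha^\l(x,y) \;=\; c_{n,s}\int_{\rn}\frac{y^{2s}\, v_\alpha^\l(z)}{(|x-z|^2+y^2)^{(n+2s)/2}}\, dz,
\end{equation*}
and split the hemisphere $\partial B_1^{n+1}\cap\r$ into $\{y\ge 1/2\}$ and $\{y<1/2\}$. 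On the interior piece the Poisson kernel is pointwise dominated by $C(1+|z|)^{-(n+2s)}$, so Lemma \ref{le4.3} yields a uniform pointwise bound on $\overline{v}_\alpha^\l$. On the near-equator piece one exploits the vanishing weight $y^{1-2s}\,d\sigma \sim \phi^{1-2s}\, d\phi\, d\omega$ (integrable since $s<1$), combined with the local $L^1$ growth $\int_{B_{2r}\setminus B_r}|v_\alpha^\l|\,dx = O(r^n\log r)$ obtained in the proof of Lemma \ref{le4.4}; a Fubini-type computation, together with the weighted decay in Lemma \ref{le4.3}, bounds the relevant kernel $\int y\, (|x-z|^2+y^2)^{-(n+2s)/2}\, d\sigma$ by $C(1+|z|)^{-(n+2s)}$, completing the proof. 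The most delicate point is handling the possible singularity of this kernel at the equatorial points $(z/|z|,0)$ when $|z|$ is near $1$; this is where the vanishing factor $y^{1-2s}$ is crucial.
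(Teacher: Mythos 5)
Your high-level strategy mirrors the paper's: reduce the weighted sum to the differences $u_\alpha^\lambda = f_\alpha^\lambda - f_{\alpha+1}^\lambda$ using the zero-sum constraint, invoke the representation $u_\alpha^\lambda = v_\alpha^\lambda + d_\alpha^\lambda$ from Lemma~\ref{le4.4}, and reduce the claim to the uniform bound $\int_{\partial B_1^{n+1}\cap\r}y^{1-2s}|\overline{v}_\alpha^\lambda|\,d\sigma\le C$. Your Abel-summation reduction is correct for odd $Q$; for even $Q$ the re-pairing leaves over a term $\frac{Q^2}{4}d_{Q/2}^\lambda$, and since Lemma~\ref{le4.4} gives only an \emph{upper} bound on $d_\alpha^\lambda$ this leftover cannot automatically be pushed into the $O(1)$ remainder --- a minor bookkeeping point, but worth flagging.

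The substantive gap is the near-equator kernel estimate. You assert
\begin{equation*}
\int_{\partial B_1^{n+1}\cap\r} y^{1-2s}P(X,z)\,d\sigma \;=\; d_{n,s}\int_{\partial B_1^{n+1}\cap\r}\frac{y\,d\sigma}{(|x-z|^2+y^2)^{(n+2s)/2}}\;\le\; \frac{C}{(1+|z|)^{n+2s}},
\end{equation*}
and then finish via Fubini and the weighted $L^1$ control of Lemma~\ref{le4.3}. This kernel bound is false for $s\ge 1/2$. Near the equatorial point $z/|z|$, set $\epsilon=\big||z|-1\big|$ and $\rho=|X-(z/|z|,0)|$; then $y\le C\rho$, $|x-z|^2+y^2\approx\rho^2+\epsilon^2$, and $d\sigma\approx\rho^{n-1}d\rho$, so the left-hand side is of order $\int_0^1\rho^n(\rho^2+\epsilon^2)^{-(n+2s)/2}\,d\rho$. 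This diverges as $\epsilon\to0$: like $\log(1/\epsilon)$ when $s=1/2$ and like $\epsilon^{1-2s}$ when $s>1/2$. Consequently $L^1$-type control of $v_\alpha^\lambda$ alone is not sufficient to bound $\int y^{1-2s}|\overline{v}_\alpha^\lambda|\,d\sigma$ uniformly.

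The paper avoids this by \emph{not} treating $v_\alpha^\lambda$ as a black-box $L^1$ function. It substitutes the Riesz-potential representation \eqref{4.4} of $v_\alpha^\lambda$ in terms of the nonlinearities $V_\beta^\lambda$ and proves the composed estimate \eqref{5.4},
\begin{equation*}
\int_{\partial B^{n+1}_1\cap\r}\int_{B_4}y^{1-2s}P(X,z)\,\frac{dz\,d\sigma}{|\xi-z|^{n-2s}}\leq C\quad\text{uniformly in }\xi\in\B,
\end{equation*}
so that the near-field term $II$ is controlled by $\int_{B_4}(2V_\alpha^\lambda+V_{\alpha-1}^\lambda+V_{\alpha+1}^\lambda)\,d\xi\le C$ using Lemma~\ref{le4.1}. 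The extra convolution with $|\xi-z|^{-(n-2s)}$ is exactly what regularizes the equatorial singularity: the inner $z$-integral is bounded in \eqref{5.5} by $C\big(r_{x,\xi}^{-(n-1)}+y^{1-2s}r_{x,\xi}^{-(n-2s)}\big)$ with $r_{x,\xi}=\frac12|x-\xi|$, and the surface integral of this quantity is then shown to be finite via stereographic projection. Your proof needs this extra layer; as written, the near-equator piece is uncontrolled for $s\ge 1/2$.
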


\begin{proof}
By a simple observation, we can rewrite $\sum_\alpha(Q+1-2\alpha)\of^\l$ as
\begin{equation}
\label{5.2}
\begin{aligned}
\sum_\alpha(Q+1-2\alpha)\of^\l=~&\sum_{\ell=1}^{[\frac{Q}{2}]}(\ell Q-\ell^2)\left[(\overline{f}_{\ell}^\l-\overline{f}_{\ell+1}^\l)+(\overline{f}_{Q-\ell}^\l-\overline{f}_{Q-\ell+1}^\l)\right]\\
=~&\sum_{\ell=1}^{[\frac{Q}{2}]}(\ell Q-\ell^2)(\overline{u}_\ell^\l+\overline{u}_{Q-\ell}^\l),
\end{aligned}
\end{equation}
where $\ou_\alpha^\l$ is the $s$-extension of $u_\alpha^\l$. In order to derive \eqref{5.1}, it suffices to show that
\begin{equation}
\label{5.3}
\int_{\partial B_1^{n+1}\cap\r}y^{1-2s}\ou_\alpha^\l d\sigma=c_sd_\alpha^\l+O(1),\quad \forall\alpha=1,\cdots,Q-1.
\end{equation}
From \eqref{4.7} and using the Poisson formula we have
\begin{equation*}
\ou_\alpha^\l(X)=\int_{\B}P(X,z)u_\alpha^\l(z)dz=d_\alpha^\l+\int_{\B}P(X,z)v_\alpha^\l(z)dz,
\end{equation*}
where $v_\alpha^\l$ is given in \eqref{4.4}. Based on the expression formula of the Poisson kernel and Lemma \ref{le4.3}, one can get that
\begin{equation*}
\int_{\B\setminus B_2}P(X,z)|v_\alpha^\l(z)|dz\leq C\quad\mbox{for}\quad |X|\leq 1.
\end{equation*}
Therefore
\begin{align*}
\int_{\partial B^{n+1}_1\cap\r}y^{1-2s}\ou_\alpha^\l(X)d\sigma
=c_sd_\alpha^\l+O(1)+\int_{\partial B_1^{n+1}\cap\r}y^{1-2s}\int_{B_2}P(X,z)v_\alpha^\l(z)dzd\sigma.
\end{align*}
We denote the last term in the above equation by $II$. To estimate the term $II$, we claim that
\begin{equation}
\label{5.4}
\int_{\partial B^{n+1}_1\cap\r}\int_{B_4}y^{1-2s}P(X,z)\frac{1}{|\xi-z|^{n-2s}}dzd\sigma\leq C\quad \mbox{for every}~\xi\in\B.
\end{equation}
Indeed, for $x\neq \xi$ we set $r_{x,\xi}=\frac12|x-\xi|$. Then we have
\begin{equation}
\label{5.5}
\begin{aligned}
&\int_{B_4}\frac{y}{|(x-z,y)|^{n+2s}|\xi-z|^{n-2s}}dz\\
&\leq\left(\int_{B(\xi,r_{x,\xi})}+\int_{B_4\setminus  B(\xi,r_{x,\xi})}\right)\frac{y}{|(x-z,y)|^{n+2s}|\xi-z|^{n-2s}}dz\\
&\leq C\frac{y}{(r_{x,\xi}+y)^{n+2s}}\int_{B(\xi,r_{x,\xi})}\frac{1}{|\xi-z|^{n-2s}}dz
+\frac{1}{r_{x,\xi}^{n-2s}}\int_{B_4\setminus  B(\xi,r_{x,\xi})}\frac{y}{|(x-z,y)|^{n+2s}}dz\\
&\leq C\left(\frac{yr_{x,\xi}^{2s}}{(r_{x,\xi}+y)^{n+2s}}
+\frac{y^{1-2s}}{r_{x,\xi}^{n-2s}}\right)\leq C\left(\frac{1}{r_{x,\xi}^{n-1}}+\frac{y^{1-2s}}{r_{x,\xi}^{n-2s}}\right).
\end{aligned}
\end{equation}
We use the stereo-graphic projection $(x,y)\to\theta$ from $\partial B_1^{n+1}\cap\r\to\mathbb{R}^n\setminus B_1$, i.e.,
$$(x,y)\to\theta=\frac{x}{1-y}.$$
Then $r_{x,\xi}=\frac12\left|\frac{2\theta}{1+|\theta|^2}-\xi\right|$ and it follows that
\begin{equation*}
\int_{\partial B_1^{n+1}\cap\r}\frac{1}{r_{x,\xi}^{n-1}}d\sigma
\leq\int_{|\theta|\geq 1}\frac{1}{r_{x,\xi}^{n-1}}\frac{1}{(1+|\theta|^2)^n}d\theta\leq C,
\end{equation*}
and
\begin{equation}
\label{5.6}
\begin{aligned}
\int_{\partial B_1^{n+1}\cap\r}\frac{y^{1-2s}}{r_{x,\xi}^{n-2s}}d\sigma
\leq\int_{|\theta|\geq1}\frac{y^{1-2s}}{r_{x,\xi}^{n-2s}}\frac{1}{(1+|\theta|^2)^n}d\theta\leq C.
\end{aligned}
\end{equation}
From \eqref{5.5}-\eqref{5.6}, we proved \eqref{5.4}. As a consequence, we have
\begin{equation*}
\begin{aligned}
|II|&\leq C+C\int_{\partial B^{n+1}_1\cap\r}y^{1-2s}\int_{|z|\leq 2}P(X,z)\int_{|\xi|\leq 4}\frac{2V_\alpha^\l(\xi)+V_{\alpha_1}^\l(\xi)+V_{\alpha+1}^\l(\xi)}{|z-\xi|^{n-2s}}d\xi dzd\sigma\\
&\leq C+C\int_{|(\xi)|\leq 4}e^{u^\l((\xi))}\int_{\partial B_{1}^{n+1}\cap\r}\int_{|z|\leq 2}y^{1-2s}P(X,z)\frac{1}{|z-\xi|^{n-2s}}dzd\sigma d\xi\\
&\leq C+C\int_{|\xi|\leq 4}\left(2V_\alpha^\l(\xi)+V_{\alpha_1}^\l(\xi)+V_{\alpha+1}^\l(\xi)\right)d\xi\leq C,
\end{aligned}
\end{equation*}
where we used \eqref{4.1} and \eqref{4.2}. Hence, we finish the proof.
\end{proof}

We use the arguments of Lemma \ref{le5.1} to derive the following weighted $L^2$-estimate of the $s$-extension of $v_\alpha^\l$, see \eqref{4.4} for the definition of $v_\alpha^\l$.  
\begin{lemma}
\label{le33.12}
Let $\overline{v}_\alpha^\l$ denote the extension of $v_\alpha^\l$, then for $r\geq1$ we have
\begin{equation}
\label{eq.33-1}
\int_{B_r^{n+1}\cap\r}y^{3-2s}|\overline{v}_\alpha^\l |^2dxdy \leq C(r),
\end{equation}
for some constant $C(r)$ depending only on $r.$
\end{lemma}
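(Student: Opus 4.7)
The plan is to follow the structure of the proof of Lemma~\ref{le5.1}, but now in a weighted $L^{2}$-setting.

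First, I would derive an explicit representation formula for $\overline{v}_\alpha^\l$ on $\r$. Applying Fubini to the Poisson representation $\overline{v}_\alpha^\l(X)=\int_\B P(X,z)v_\alpha^\l(z)\,dz$ (valid by Lemma~\ref{le4.3}) together with the defining formula \eqref{4.4}, and using that $|X-\xi|^{-(n-2s)}$ (with the $(n+1)$-dimensional Euclidean distance, viewing $\xi\in\br$ as a boundary point) is $s$-harmonic in $\r\setminus\{\xi\}$ and coincides with $|z-\xi|^{-(n-2s)}$ on $\br$, one obtains
\begin{equation*}
\overline{v}_\alpha^\l(X)=c(n,s)\int_\B\left(\frac{1}{|X-\xi|^{n-2s}}-\frac{1}{(1+|\xi|)^{n-2s}}\right)\psi(\xi)\,d\xi,
\end{equation*}
where $\psi:=2V_\alpha^\l-V_{\alpha-1}^\l-V_{\alpha+1}^\l$.

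Second, I would split the $\xi$-integration at $|\xi|=4r$. For $|\xi|>4r$ and $X\in B_r^{n+1}\cap\r$, the elementary bound
$$\left|\frac{1}{|X-\xi|^{n-2s}}-\frac{1}{(1+|\xi|)^{n-2s}}\right|\leq C(r)(1+|\xi|)^{-(n-2s+1)}$$
combined with Lemma~\ref{le4.2} yields a uniform-in-$X$ bound of $C(r)$ on the far-field contribution; squaring and integrating against $y^{3-2s}$ over $B_r^{n+1}$ produces $C(r)$. For $|\xi|\leq 4r$, the constant-kernel piece $\int_{|\xi|\leq 4r}(1+|\xi|)^{-(n-2s)}|\psi(\xi)|\,d\xi$ is uniformly bounded by $C(r)$ via Lemma~\ref{le4.1}. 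The remaining task is to control
\begin{equation*}
I(r):=\int_{B_r^{n+1}\cap\r}y^{3-2s}\left(\int_{|\xi|\leq 4r}\frac{|\psi(\xi)|}{|X-\xi|^{n-2s}}\,d\xi\right)^{2}dX.
\end{equation*}

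Third, by Fubini,
\begin{equation*}
I(r)=\iint_{|\xi|,|\eta|\leq 4r}|\psi(\xi)||\psi(\eta)|\,K(\xi,\eta)\,d\xi\,d\eta,\qquad K(\xi,\eta):=\int_{B_r^{n+1}\cap\r}\frac{y^{3-2s}\,dX}{|X-\xi|^{n-2s}|X-\eta|^{n-2s}}.
\end{equation*}
Using $|X-\xi|+|X-\eta|\geq|\xi-\eta|$ (so at least one of the two exceeds $|\xi-\eta|/2$),
\begin{equation*}
K(\xi,\eta)\leq\frac{C}{|\xi-\eta|^{n-2s}}\int_{B_r^{n+1}\cap\r}y^{3-2s}\left(\frac{1}{|X-\xi|^{n-2s}}+\frac{1}{|X-\eta|^{n-2s}}\right)dX.
\end{equation*}
After a translation $X\mapsto X-\xi$ (which preserves $y$ since $\xi\in\br$), the inner integral reduces in polar coordinates to $\int_0^{5r}\rho^{3}\,d\rho\cdot\int_{S^n_+}\omega_{n+1}^{3-2s}\,d\sigma(\omega)\leq C(r)$, hence $K(\xi,\eta)\leq C(r)|\xi-\eta|^{-(n-2s)}$. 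Then the Hardy--Littlewood--Sobolev inequality with exponent $p=2n/(n+2s)$ gives $I(r)\leq C(r)\|\psi\|_{L^p(B_{4r})}^{2}$. An elementary check shows $1<p=2n/(n+2s)<\min\{5,1+n/(2s)\}$ for every $n>2s$, so Proposition~\ref{pr4.1} delivers $\psi\in L^p(B_{4r})$ and therefore $I(r)\leq C(r)$.

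The principal obstacle will be the weighted potential-theoretic bound on $K(\xi,\eta)$: it is the $L^{2}$-analog, with weight $y^{3-2s}$ in place of $y^{1-2s}$, of the computation \eqref{5.4}--\eqref{5.6} from Lemma~\ref{le5.1}, and the polar-coordinate bookkeeping in $\r$ is delicate but routine. A secondary subtlety is verifying that the HLS exponent $p=2n/(n+2s)$ lies in the integrability range provided by Proposition~\ref{pr4.1}; this boils down to the algebraic inequality $4ns<(n+2s)^2$, which is equivalent to $n^2+4s^2>0$ and hence always holds.
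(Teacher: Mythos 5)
Your argument is correct, but it takes a genuinely different route from the paper's. You rewrite $\overline{v}_\alpha^\lambda$ intrinsically, using that the $s$-harmonic extension of the Riesz kernel $|z-\xi|^{-(n-2s)}$ is $|X-\xi|^{-(n-2s)}$ (with $X\in\r$ and $\xi\in\br$), then expand the weighted squared Riesz potential via Fubini, dominate the bilinear kernel $K(\xi,\eta)$ using the elementary split $|X-\xi|+|X-\eta|\ge|\xi-\eta|$ together with a polar integration, and close the estimate with Hardy--Littlewood--Sobolev at the self-dual exponent $p=2n/(n+2s)$, which indeed lies in the range $(1,\min\{5,n/(2s)\})$ allowed by Proposition~\ref{pr4.1} whenever $n>2s$. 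The paper instead keeps the Poisson kernel explicit (cf.\ \eqref{eq.33-2}), uses \eqref{eq.33-3} to reduce to a pointwise-in-$x$ estimate on $\int_{|\xi|\le 4r}|\psi(\xi)|\,|x-\xi|^{-(n-2s)}d\xi$, and then splits the $\xi$-region into the small ball $B_{2\lambda^{-1}R}$ (controlled by the interior regularity of $f_\alpha$ and the rescaling $V_\alpha^\lambda(x)=\lambda^{2s}V_\alpha(\lambda x)$), an intermediate region away from $x$ (controlled by Lemma~\ref{le4.1} and a dyadic summation, producing a harmless $C(r)(1+|\log|x||)$), and the ball $B(x,|x|/2)$ (controlled by Cauchy--Schwarz and Proposition~\ref{pr4.1}\,(i) at $p=2$). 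Your route avoids the dyadic bookkeeping and the $\lambda$-dependent decomposition at the cost of invoking HLS; the paper's route is more hands-on but makes the role of each integrability input more explicit. One remark worth flagging: the weight $y^{3-2s}$ in \eqref{eq.33-1} appears to be a typo for $y^{1-2s}$ --- the paper's own chain \eqref{eq.33-2}--\eqref{eq.33-8} is carried out with $y^{1-2s}$, and Lemma~\ref{le5.4} actually requires the $y^{1-2s}$ bound. Your proof is unaffected: replacing $y^{3-2s}$ by $y^{1-2s}$ merely changes the radial factor in $K(\xi,\eta)$ from $\int_0^{5r}\rho^3\,d\rho$ to $\int_0^{5r}\rho\,d\rho$, and the accompanying angular integral $\int_{\partial B_1^{n+1}\cap\r}\omega_{n+1}^{1-2s}\,d\sigma$ is still finite for every $0<s<1$.
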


\begin{proof}
From the proof of Lemma \ref{le5.1}, it is enough  to show that
\begin{equation}
\label{eq.33-2}
\int_{B_r^{n+1}\cap\r}y^{1-2s}\left(\int_{|z|\leq2r}P(X,z)\int_{|\xi|\leq 4r}\frac{V_\alpha^\l(\xi)+V_{\alpha-1}^\l(\xi)+V_{\alpha+1}^\l(\xi)}{|\xi-z|^{n-2s}}d\xi dz\right)^2dxdy \leq C(r).
\end{equation}
For $x\neq \zeta$, following the arguments in \eqref{5.5} we have
\begin{equation}
\label{eq.33-3}
\begin{aligned}
\int_{|z|\leq 2r}\frac{y^{2s}}{|(x-z,y)|^{n+2s}|\zeta-z|^{n-2s}}dz\leq \frac{C}{|x-\zeta|^{n-2s}}.
\end{aligned}
\end{equation}
As a consequence, we get
\begin{equation}
\label{eq.33-4}
\int_{B_r^{n+1}\cap\r}y^{1-2s}|\overline{v}_\alpha^\l|^2dxdy\leq C+C\int_{B_r^{n+1}\cap\r}y^{1-2s}\left(\int_{|\xi|\leq 4r}\frac{V_\alpha^\l(\xi)+V_{\alpha-1}^\l(\xi)+V_{\alpha+1}^\l(\xi)}{|x-\xi|^{n-2s}}d\xi\right)^2dxdy.
\end{equation}
For $x\neq0$, we write
\begin{equation}
\label{eq.33-5}
\int_{|\xi|\leq 4r}\frac{V_\alpha^\l(\xi)}{|x-\xi|^{n-2s}}d\xi
=\left(\int_{B_{2\l^{-1}R}}+\int_{(B_{4r}\setminus B_{2\l^{-1}R})\setminus B(x,\frac{|x|}{2})}+\int_{(B_{4r}\setminus B_{2\l^{-1}R})\cap B(x,\frac{|x|}{2})}\right)\frac{V_\alpha^\l(\xi)}{|x-\xi|^{n-2s}}d\xi.
\end{equation}
Here $R$ is chosen such that $\bff$ is stable in $\rn\setminus B_R.$ For the first term on the right-hand side, using $f_\alpha\in C^{2\gamma}(\rn)$ we have
\begin{equation}
\label{eq.33-6}
\int_{B_{2\l^{-1}R}}\frac{V_\alpha^\l(\xi)}{|x-\xi|^{n-2s}}d\xi\leq C(\l^{-1}R)^{2s}\l^{2s}\leq C.
\end{equation}
Fo the second term on the right-hand side,
\begin{equation}
\label{eq.33-7}
\begin{aligned}
\int_{(B_{4r}\setminus B_{2\l^{-1}R})\setminus B(x,\frac{|x|}{2})}\frac{V_\alpha^\l(\xi)}{|x-\xi|^{n-2s}}d\xi\leq~&
C+\int_{B_{4r}\setminus B_{2|x|}}\frac{V_\alpha^\l(\xi)}{|\xi|^{n-2s}}d\xi
\\\leq~& C+\sum_{i=1}^{[|\log_2(\frac{2r}{|x|})|]}\int_{B_{2^{i}|x|}\setminus B_{2^{i-1}|x|}}\frac{V_\alpha^\l(\xi)}{|\xi|^{n-2s}}d\xi\\
\leq~& C(r)+C(r)|\log|x||.
\end{aligned}
\end{equation}
For the third term, we have
\begin{equation}
\label{eq.33-7}
\begin{aligned}
\left(\int_{(B_{4r}\setminus B_{2\l^{-1}R})\cap B(x,\frac{|x|}{2})} \frac{V_\alpha^\l(\xi)}{|x-\xi|^{n-2s}}d\xi\right)^2\leq~& \left(\int_{B(x,\frac{|x|}{2})} \frac{V_\alpha^\l(\xi)}{|x-\xi|^{n-2s}}d\xi\right)^2\\
\leq~&\int_{B(x,\frac{|x|}{2})} \frac{1}{|x-\xi|^{n-2s}}d\xi\int_{B(x,\frac{|x|}{2})} \frac{(V_\alpha^\l(\xi))^2}{|x-\xi|^{n-2s}}d\xi.
\end{aligned}
\end{equation}
From \eqref{eq.33-4}-\eqref{eq.33-7} and Proposition \ref{pr4.1}-(i), we get
\begin{equation}
\label{eq.33-8}
\begin{aligned}
\int_{B_{r}^{n+1}\cap\r}y^{1-2s}|\overline{v}_\alpha^\l|^2dxdy
\leq~&C(r)+\int_{B_{r}^{n+1}\cap\r}y^{1-2s}\left(C(r)+C(r)|\log|x||^2\right)dxdy\\
&+\int_{B_{r}^{n+1}\cap\r}y^{1-2s}|x|^{2s}
\int_{B(x,\frac{|x|}{2})}\frac{(V_\alpha(\xi)+V_{\alpha+1}(\xi)+V_{\alpha-1}(\xi))^2}{|x-\xi|^{n-2s}}d\xi dxdy\\
\leq~&C(r)+C(r)\int_{(B_{r}^{n+1}\cap\r)\cap\{x\mid|x|\leq 4\l^{-1}R\}}y^{1-2s}\frac{|x|^{2s}}{|x-\xi|^{n-2s}}\l^{4s}dxdy\\
&+C(r)\int_{B_r\setminus B_{4\l^{-1}R}}\frac{|x|^{2s}}{|x-\xi|^{n-2s}}\int_{B(x,\frac{|x|}{2})}(V_\alpha(\xi)+V_{\alpha+1}(\xi)+V_{\alpha-1}(\xi))^2d\xi dx\\
\leq~&C(r),
\end{aligned}
\end{equation}
where we used
$$\int_{B(x,\frac{|x|}{2})}(V_\alpha(\xi)+V_{\alpha+1}(\xi)+V_{\alpha-1}(\xi))^2d\xi\leq C\l^{4s}\quad\mbox{for}~|x|\leq 4\l^{-1}R.$$
This finishes the proof.
\end{proof}

In order to estimate the first quadratic term in the monotonicity formula \eqref{energyEs1},  we need the following result.

\begin{lemma}
\label{le5.2}
We have
$$\int_{B_r}|(-\D)^\frac s2u_\alpha^\lambda(x)|^2dx\leq Cr^{n-2s}\quad\text{for~ }r\geq1,~\ \lambda\geq1,\quad \alpha=1,\cdots,Q-1.$$
\end{lemma}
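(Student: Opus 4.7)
The plan is to identify $(-\Delta)^{s/2} u_\alpha^\lambda$ explicitly as a Riesz potential of order $s$ of
\[
F^\lambda := 2V_\alpha^\lambda - V_{\alpha-1}^\lambda - V_{\alpha+1}^\lambda,
\]
and then to estimate the resulting $L^2(B_r)$-norm by splitting the source into a near part (handled via Hardy--Littlewood--Sobolev together with Proposition \ref{pr4.1}) and a far part (handled via a dyadic decomposition using Corollary \ref{cr4.1}).

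First, I would combine the representation formula in Lemma \ref{le4.4} with the Fourier--multiplier identity
\[
(-\Delta_x)^{s/2} |x-z|^{-(n-2s)} = C_{n,s}\,|x-z|^{-(n-s)}
\]
and the fact that $(-\Delta)^{s/2}$ annihilates constants in $x$ (so the subtracted kernel $(1+|z|)^{-(n-2s)}$ and the constant $d_\alpha^\lambda$ drop out) to derive
\[
(-\Delta)^{s/2} u_\alpha^\lambda(x) \;=\; C_{n,s}\int_{\rn} \frac{F^\lambda(z)}{|x-z|^{n-s}}\,dz.
\]
Convergence of this integral, and the legitimacy of differentiating under the integral, would follow from the uniform $L^1$-bound $\int_{B_\rho}|F^\lambda|\,dx \leq C\rho^{n-2s}$ for $\rho\geq 1$ coming from Corollary \ref{cr4.1}, once we note that $\bff^\lambda$ is stable outside $B_{R/\lambda}\subset B_R$ for all $\lambda\geq 1$.

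Next, I would split $F^\lambda = F_1 + F_2$ with $F_1 := F^\lambda \chi_{B_{4r}}$ and $F_2 := F^\lambda - F_1$. For the near part, I would take $p := 2n/(n+2s)$, check that $1<p<\min\{5,\,1+n/(2s)\}$ when $n>2s$, and apply Proposition \ref{pr4.1} to $\bff^\lambda$ to obtain $\int_{B_{4r}}|V_\beta^\lambda|^p\,dx \leq C r^{n-2ps}$ uniformly in $r\geq 1$ and $\lambda\geq 1$, the uniformity being a consequence of the scaling $V_\beta^\lambda(x)=\lambda^{2s}V_\beta(\lambda x)$. Then the Hardy--Littlewood--Sobolev inequality with the critical pair $\tfrac{1}{p}-\tfrac12=\tfrac{s}{n}$ yields
\[
\int_{B_r} \bigl|(-\Delta)^{-s/2} F_1\bigr|^2\,dx \;\leq\; \bigl\|(-\Delta)^{-s/2} F_1\bigr\|_{L^2(\rn)}^2 \;\leq\; C\|F_1\|_{L^p(\rn)}^2 \;\leq\; C r^{2(n-2ps)/p} \;=\; C r^{n-2s}.
\]
For the far part, whenever $x\in B_r$ and $|z|\geq 4r$ one has $|x-z|\geq 3|z|/4$, so a dyadic decomposition over annuli $\{2^k r\leq |z|\leq 2^{k+1}r\}$ combined with Corollary \ref{cr4.1} gives the pointwise bound
\[
\bigl|(-\Delta)^{-s/2} F_2(x)\bigr| \;\leq\; C\sum_{k\geq 2}(2^k r)^{-(n-s)}\!\int_{B_{2^{k+1}r}}\!|F^\lambda|\,dz \;\leq\; C r^{-s}\sum_{k\geq 2} 2^{-ks} \;\leq\; C r^{-s},
\]
whence $\int_{B_r}|(-\Delta)^{-s/2} F_2|^2\,dx \leq Cr^{n-2s}$. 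Adding the two contributions finishes the proof.

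The principal obstacle I anticipate is the justification of the first step: since the kernel $|x-z|^{-(n-2s)}$ appearing in Lemma \ref{le4.4} is only conditionally integrable at infinity and $u_\alpha^\lambda$ itself does not decay, identifying $(-\Delta)^{s/2} u_\alpha^\lambda$ with the explicit Riesz potential of $F^\lambda$ requires some care. I expect to handle this by truncating $F^\lambda$ at a large scale, carrying out the calculation for the truncated version (where differentiation under the integral is transparent), and then passing to the limit using the uniform $L^1$-decay estimates from Lemmas \ref{le4.1}--\ref{le4.3}. The remainder of the argument is a fairly mechanical combination of HLS with dyadic tail estimates.
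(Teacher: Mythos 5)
Your proof is correct, but it takes a genuinely different route from the paper's. The paper also reduces to $\lambda=1$ by scaling, then splits the domain of $x$ into $B_{4R}$ (handled trivially via $u_\alpha\in\dot H^s_{\mathrm{loc}}$) and $B_r\setminus B_{4R}$; on the latter it further splits the $z$-integration into $B_{2R}$, $B_{2r}\setminus B_{2R}$ and $\rn\setminus B_{2r}$, and the middle piece is controlled by a Cauchy--Schwarz inequality on $B(x,|x|/2)$ together with Proposition~\ref{pr4.1} applied with $p=2$ (needing $2<1+n/(2s)$, i.e.\ $n>2s$). You instead split the \emph{source} $F^\lambda$ into near and far parts relative to $B_{4r}$ and apply Hardy--Littlewood--Sobolev at the subcritical exponent $p=2n/(n+2s)\in(1,2)$, which yields $2(n-2ps)/p=n-2s$ exactly; the far part is a standard dyadic tail estimate identical in spirit to the paper's $I_2$. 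Your version is cleaner (two cases instead of five) and avoids the pointwise Cauchy--Schwarz trick, at the cost of invoking HLS; both ultimately require $n>2s$. One small slip: to bound $\int_{B_{4r}}(V_\beta^\lambda)^p$ you need the ball estimate of Proposition~\ref{pr4.1}~(ii), whose admissible range is $p\in[1,\min\{5,\,n/(2s)\})$, not the range $p<\min\{5,\,1+n/(2s)\}$ that you quote (that is the range for the annular estimate~\eqref{4.15} and for part~(i)); fortunately $2n/(n+2s)<n/(2s)$ whenever $n>2s$, so your choice of $p$ satisfies the correct constraint anyway, and the argument goes through.
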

\begin{proof}
By a scaling argument,  it suffices to prove the lemma for $\lambda=1$. It follows from \eqref{4.7} that
$$(-\D)^\frac s2u_\alpha(x)=C\int_{\B}\frac{1}{|x-z|^{n-s}}(2V_\alpha(z)-V_{\alpha-1}(z)-V_{\alpha+1}(z))dz,$$
in the sense of distribution. Let $R>0$ be such that $\bff$ is stable outside $B_R$. For $r\gg R$, we decompose $B_{r}=B_{4R}\cup (B_{r}\setminus B_{4R}).$  Since  $u_\alpha\in\dot H_{\mathrm{loc}}^s(\B)$ for any $\alpha=1,\cdots,Q-1$,  we get
\begin{equation}
\label{5.7}
\int_{B_{4R}}|(-\D)^\frac{s}{2} u_\alpha|^2dx\leq C(R).
\end{equation}
While for $4R<|x|<r$ we estimate
\begin{equation}
\label{5.8}
\begin{aligned}
|(-\D)^\frac s2u_\alpha(x)| &\leq C\left(\int_{B_{2R}}    +\int_{B_{2r}\setminus B_{2R}}+\int_{\B\setminus B_{2r}}\right)
\frac{1}{|x-z|^{n-s}}\left(2V_\alpha(z)+V_{\alpha-1}(z)+V_{\alpha+1}(z)\right)dz\\
&\leq  \frac{C}{|x|^{n-s}}+C\int_{B_{2r}\setminus B_{2R}}\frac{1}{|x-z|^{n-s}}\left(2V_\alpha(z)+V_{\alpha-1}(z)+V_{\alpha+1}(z)\right)dz
\\
&\quad + C\int_{\B\setminus B_{2r}}
\frac{1}{|z|^{n-s}}\left(2V_\alpha(z)+V_{\alpha-1}(z)+V_{\alpha+1}(z)\right)dz \\
&=:C\left(\frac{1}{|x|^{n-s}}+I_1(x)+I_2 \right) .
\end{aligned}
\end{equation}
Using \eqref{4.1},  we bound the last term in the above as
\begin{align*}
|I_2|=\sum_{k=0}^\infty \int_{2^kr\leq|x|\leq 2^{k+1}r}\left|\frac{2V_\alpha(z)+V_{\alpha-1}(z)+V_{\alpha+1}(z)}{|z|^{n-s}}\right|dz  \leq C\sum_{k=0}^\infty \frac{(2^{k+1}r)^{n-2s}}{(2^kr)^{n-s}}\leq \frac{C}{r^{s}}.
\end{align*}
Therefore,
\begin{equation}
\label{5.9}
\int_{B_r}I_2^2dx\leq Cr^{n-2s}.
\end{equation}
In regards to $I_1(x)$ in the above, we have
\begin{equation}
\label{5.10}
\begin{aligned}
I_1(x)\leq~&\int_{\{z|2R\leq|z|\leq 2|x|\}\cap B(x,\frac{|x|}{2})}
\frac{2V_\alpha(z)+V_{\alpha-1}(z)+V_{\alpha+1}(z)}{|x-z|^{n-s}}dz\\
&+\int_{\{z|2R\leq|z|\leq 2|x|\}\setminus B(x,\frac{|x|}{2})}
\frac{2V_\alpha(z)+V_{\alpha-1}(z)+V_{\alpha+1}(z)}{|x-z|^{n-s}}dz\\
&+\int_{2r\geq|z|\geq 2|x|}\frac{2V_\alpha(z)+V_{\alpha-1}(z)+V_{\alpha+1}(z)}{|x-z|^{n-s}}dz.
\end{aligned}
\end{equation}
For the second term on the right-hand side of \eqref{5.10}, we have
\begin{equation}
\label{5.11}
\begin{aligned}
\left|\int_{\{z|2R\leq|z|\leq 2|x|\}\setminus B(x,\frac{|x|}{2})}
\frac{2V_\alpha(z)+V_{\alpha-1}(z)+V_{\alpha+1}(z)}{|x-z|^{n-s}}dz\right|
\leq C\frac{|x|^{n-2s}}{|x|^{n-s}}\leq \frac{C}{|x|^s},
\end{aligned}
\end{equation}
while for the third term on the right-hand side of \eqref{5.10}, following the estimation of $I_2$ we get
\begin{equation}
\label{5.12}
\left|\int_{2r\geq|z|\geq 2|x|}\frac{2V_\alpha(z)+V_{\alpha-1}(z)+V_{\alpha+1}(z)}{|x-z|^{n-s}}dz\right|\leq\frac{C}{|x|^s}.
\end{equation}
Concerning the first term on the right-hand side of \eqref{5.10},  from \eqref{5.11}-\eqref{5.12} and H\"older's inequality, we obtain
\begin{equation}
\label{5.13}
I_1^2(x)\leq C\left( \frac{1}{|x|^{2s}}+
\int_{B(x,\frac{|x|}{2})}\frac{1}{|x-z|^{n-s}}dz
\int_{B(x,\frac{|x|}{2})}\frac{V_\alpha^2(z)+V_{\alpha-1}^2(z)+V_{\alpha+1}^2(z)}{|x-z|^{n-s}}dz
\right).
\end{equation}
Now, using Proposition \ref{pr4.1} we conclude
\begin{equation}
\label{5.14}
\begin{aligned}
\int_{B_r}I_1^2(x)\leq~&C\int_{B_r\setminus B_{4R}}\left(\frac{1}{|x|^{2s}}+
\int_{B(x,\frac{|x|}{2})}\frac{1}{|x-z|^{n-s}}dz
\int_{B(x,\frac{|x|}{2})}\frac{V_\alpha^2(z)+V_{\alpha-1}^2(z)+V_{\alpha+1}^2(z)}{|x-z|^{n-s}}dz
\right)dx\\
\leq~&Cr^{n-2s}+C\int_{B_r\setminus B_{4R}}\frac{|x|^s}{|x-z|^{n-s}}dx
\int_{B(x,\frac{|x|}{2})}(V_\alpha^2(z)+V_{\alpha-1}^2(z)+V_{\alpha+1}^2(z))dz\\
\leq~&Cr^{n-2s}+C\sum_{i=2}^{[\log_2\left(\frac{r}{4R}\right)]+1}\left(\int_{B_{2^{i+1}R}\setminus B_{2^iR}}
\frac{|x|^s}{|x-z|^{n-s}}dx
\int_{B(x,\frac{|x|}{2})}(V_\alpha^2(z)+V_{\alpha-1}^2(z)+V_{\alpha+1}^2(z))dz\right)\\
\leq~&Cr^{n-2s}+C\sum_{i=2}^{[\log_2\left(\frac{r}{4R}\right)]+1}2^{i(n-2s)}R^{n-2s} \leq Cr^{n-2s},
\end{aligned}
\end{equation}
where we used $n>2s$ and $r>1.$ Combining  \eqref{5.8}, \eqref{5.9} and  \eqref{5.14}, we deduce
\begin{equation}
\label{5.15}
\int_{B_r\setminus B_{4R}}|(-\D)^\frac{s}{2} u_\alpha|^2dx\leq Cr^{n-2s}.
\end{equation}
Then, the proof follows from \eqref{5.7}, \eqref{5.15} and $n>2s.$
\end{proof}
We now use Lemma \ref{le5.2} to prove weighted $L^2$-estimate for $|\nabla\of|$.
\begin{lemma}
\label{le5.3}
We have
\begin{equation}
\label{5.16}
\sum_\alpha\int_{B_r^{n+1}\cap\r} y^{1-2s}|\nabla\of^\l|^2 dxdy\leq C(r),\quad
\forall r>1,\l\geq1.
\end{equation}
\end{lemma}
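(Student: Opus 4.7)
The plan is to reduce the weighted gradient estimate on $\of^\l$ to estimates on the extensions $\ou_k^\l$ of the differences $u_k^\l := f_k^\l - f_{k+1}^\l$ ($k=1,\dots,Q-1$), for which the integral representation of Lemma \ref{le4.4} and the weighted $L^2$ bound of Lemma \ref{le33.12} make a Caccioppoli-type argument on $\r$ possible.

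\textbf{Step 1 (reduction via the constraint $\sum_\alpha f_\alpha^\l=0$).} Solving the linear system $u_k^\l=f_k^\l-f_{k+1}^\l$ together with $\sum_\alpha f_\alpha^\l=0$ yields universal coefficients $c_{\alpha,k}$ with $f_\alpha^\l=\sum_{k=1}^{Q-1}c_{\alpha,k}u_k^\l$. Because the $s$-harmonic extension is linear and sends constants to themselves, $\of^\l=\sum_k c_{\alpha,k}\ou_k^\l$, so $|\nabla\of^\l|^2\leq C\sum_k|\nabla\ou_k^\l|^2$. By Lemma \ref{le4.4}, $u_k^\l=v_k^\l+d_k^\l$, hence $\nabla\ou_k^\l=\nabla\overline{v}_k^\l$, and the task reduces to bounding $\int_{B_r^{n+1}\cap\r}y^{1-2s}|\nabla\overline{v}_k^\l|^2\,dx\,dy$ for each $k$.

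\textbf{Step 2 (Caccioppoli on $\overline{v}_k^\l$).} Choose $\phi\in C_c^\infty(\mathbb{R}^{n+1})$ with $\phi\equiv 1$ on $B_r^{n+1}$, $\operatorname{supp}\phi\subset B_{2r}^{n+1}$, and $|\nabla\phi|\leq C/r$. Since $\overline{v}_k^\l$ is $y^{1-2s}$-harmonic in $\r$ with Dirichlet-to-Neumann $-\lim_{y\to 0}y^{1-2s}\partial_y\overline{v}_k^\l=\kappa_s(2V_k^\l-V_{k-1}^\l-V_{k+1}^\l)$ coming from \eqref{4.3}, testing the extension equation against $\overline{v}_k^\l\phi^2$ and absorbing the cross term with Young's inequality produces
\begin{equation*}
\int_{B_r^{n+1}\cap\r}y^{1-2s}|\nabla\overline{v}_k^\l|^2\,dx\,dy\leq \frac{C}{r^2}\int_{B_{2r}^{n+1}\cap\r}y^{1-2s}|\overline{v}_k^\l|^2\,dx\,dy+C\int_{B_{2r}}|v_k^\l|(V_{k-1}^\l+V_k^\l+V_{k+1}^\l)\,dx.
\end{equation*}
The first right-hand term is $\leq C(r)$ by the $y^{1-2s}$-weighted $L^2$ bound on $\overline{v}_k^\l$ that is actually established inside the proof of Lemma \ref{le33.12} (see \eqref{eq.33-4}-\eqref{eq.33-8}).

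\textbf{Step 3 (boundary term -- the main obstacle).} Decomposing $v_k^\l=2w_k^\l-w_{k-1}^\l-w_{k+1}^\l$ (cf.\ the proof of Lemma \ref{le4.3}) and using the pointwise bound \eqref{4.11} for each $w_\gamma^\l$, Hardy-Littlewood-Sobolev applied to the Riesz-potential part together with the higher-integrability estimate $\int_{B_{2r}}V_\gamma^{\l,p}\,dx\leq C(r)$ from Proposition \ref{pr4.1} (with $p$ taken close to $\min\{5,1+n/(2s)\}$) places $v_k^\l|_{B_{2r}}$ in $L^{p'}(B_{2r})$ uniformly in $\l$ for an appropriate dual exponent $p'$. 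Hölder's inequality then yields $\int_{B_{2r}}|v_k^\l|V_\gamma^\l\,dx\leq C(r)$, which finishes the proof. The delicate point is to choose the Hölder exponents so that the HLS integrability of $v_k^\l$ and the integrability range of $V_\gamma^\l$ furnished by Proposition \ref{pr4.1} are simultaneously admissible; this exponent balance is the primary source of technical difficulty.
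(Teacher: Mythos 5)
Your proof is correct, and it takes a genuinely different route from the paper's.

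Both proofs begin identically: solving $\sum_\alpha f_\alpha^\l=0$ and $u_k^\l = f_k^\l - f_{k+1}^\l$ to write $\of^\l$ as a fixed linear combination of the $\ou_k^\l$, and then reducing via Lemma \ref{le4.4} (so $\nabla\ou_k^\l=\nabla\overline{v}_k^\l$). After that, the paper splits $u_\alpha^\l = u_{\alpha,1}^\l + u_{\alpha,2}^\l$ into a ``near'' piece (density cut off in $B_{4r}$) and a ``far'' piece; the near piece is handled through the fractional Dirichlet-energy identity $\int_{\r}y^{1-2s}|\nabla\bar w|^2 = \kappa_s\int_{\rn}|\ss w|^2$ together with the $L^2$ estimate of Lemma \ref{le5.2}, while the far piece $u_{\alpha,2}^\l$ is shown to be $C^1$-controlled on $B_{3r/2}$ and its extension's derivatives are then estimated directly from the Poisson kernel (the $\partial_y$ and $\nabla_x$ parts separately). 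You instead run a Caccioppoli argument for the $y^{1-2s}$-degenerate equation on $\overline{v}_k^\l$: the cross term is absorbed using the $y^{1-2s}$-weighted $L^2$ bound on $\overline{v}_k^\l$ (which, as you correctly observe, is what the proof of Lemma \ref{le33.12} actually establishes in \eqref{eq.33-4}--\eqref{eq.33-8}, notwithstanding the $y^{3-2s}$ weight in the lemma's statement), and the boundary term $\int_{B_{2r}}|v_k^\l|(V_{k-1}^\l+V_k^\l+V_{k+1}^\l)$ is handled via H\"older, Hardy--Littlewood--Sobolev and Proposition \ref{pr4.1}. Your route bypasses Lemma \ref{le5.2} and the near/far decomposition entirely, at the cost of leaning on Lemma \ref{le33.12}; the paper's route avoids the quantitative Caccioppoli absorption (and hence does not need the a priori weighted $L^2$ bound on $\overline{v}_k^\l$ for the cross term) but requires the separate $\ss u$-type estimate of Lemma \ref{le5.2}.

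Two points you should make explicit to turn the sketch into a complete argument. First, for $x\in B_{2r}$ one should record the clean bound
\begin{equation*}
|v_k^\l(x)|\leq C(r)+C\int_{B_{4r}}\frac{V_{k-1}^\l(z)+V_k^\l(z)+V_{k+1}^\l(z)}{|x-z|^{n-2s}}\,dz,
\end{equation*}
obtained by splitting the integral defining $v_k^\l$ over $B_{4r}$ and its complement and using $|\,|x-z|^{-(n-2s)}-(1+|z|)^{-(n-2s)}|\leq C(r)|z|^{-(n-2s+1)}$ for $|z|\geq 4r$ together with Lemma \ref{le4.2} (rather than invoking \eqref{4.11}, which is stated only for $|x|$ large and so leaves a neighbourhood of the origin untreated). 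Second, the exponent balance should be stated: taking $p_0=\tfrac{2n}{n+2s}\in(1,\min\{5,\tfrac{n}{2s}\})$ and $p_0'=\tfrac{2n}{n-2s}$ so that HLS maps $L^{p_0}\to L^{p_0'}$ for the Riesz potential of order $2s$, one gets $\int_{B_{2r}}|v_k^\l|V_\gamma^\l\leq C(r)+C\,\|V_\gamma^\l\|_{L^{p_0}(B_{2r})}\bigl\|\sum_\beta V_\beta^\l\bigr\|_{L^{p_0}(B_{4r})}\leq C(r)$, where the last bound uses the scaling $V_\gamma^\l(x)=\l^{2s}V_\gamma(\l x)$ together with Proposition \ref{pr4.1}(ii); the $\l$-powers cancel exactly because $2s-n+\tfrac{2(n-2p_0 s)}{p_0}=0$. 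With these two steps filled in, the argument is complete.
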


\begin{proof}
Under the assumption that $\sum_\alpha f_\alpha=0$, it is straightforward to see that
\begin{equation}
\label{5.17}
\sum_\alpha f_\alpha^\l=0.
\end{equation}
From the relation $u_\alpha^\l=f_\alpha^\l-f_{\alpha+1}^\l$, one gets
\begin{equation}
\label{5.18}
f_\alpha^\l=\sum_{\beta=1}^{\alpha-1}u_\beta^\l-\frac{1}{Q}\sum_{\beta=1}^{Q-1}(Q-\beta)u_\beta^\l.
\end{equation}
Based on \eqref{5.18},  we can find constant $C$ such that	
\begin{equation}
\label{5.19}
\sum_\alpha\int_{B_r^{n+1}\cap\r}y^{1-2s}|\nabla\of|^2dxdy\leq C\sum_{\alpha=1}^{Q-1}\int_{B_r^{n+1}\cap\r}y^{1-2s}|\nabla\ou_\alpha|^2dxdy.
\end{equation}	
Therefore, it suffices to show that
\begin{equation}
\label{5.20}
\int_{B_r^{n+1}\cap\r}y^{1-2s}|\nabla\ou_\alpha|^2dxdy\leq C(r),~\forall r\geq1,~\l\geq1,\quad \alpha=1,\cdots,Q-1.
\end{equation}
We consider the following decomposition,
$$u_\alpha^\l=u_{\alpha,1}^\l+u_{\alpha,2}^\l,$$ where
\begin{align*}
u_{\alpha,1}^\l(x)&=c(n,s)\int_{\B}\left(\frac{1}{|x-z|^{n-2s}}-\frac{1}{(1+|z|)^{n-2s}}\right)\varphi(z)(2V_\alpha(z)-V_{\alpha-1}(z)-V_{\alpha+1}(z))dz+d_\alpha^\l,\\
u_{\alpha,2}^\l(x)&=c(n,s)\int_{\B}\left(\frac{1}{|x-z|^{n-2s}}-\frac{1}{(1+|z|)^{n-2s}}\right)(1-\varphi(z))(2V_\alpha(z)-V_{\alpha-1}(z)-V_{\alpha+1}(z))dz,
\end{align*}
for $\varphi\in C_c^\infty(B_{4r})$  is such that $\varphi=1$ in $B_{2r}$. As in the proof of Lemma \ref{le5.2}, one can show that    $$\int_{\r}y^{1-2s}|\nabla\ou_{\alpha,1}^\l(x)|^2dxdy=\kappa_s\int_{\B}\left|\ss \ou_{\alpha,1}^\l(x)\right|^2dx\leq C(r).$$
Here and in what follows,  $\ou_{\alpha,i}^\l$ denotes the $s$-harmonic extension of $u_{\alpha,i}^\l$ for $i=1,2$. It remains to prove that
\begin{equation}
\label{5.21}
\int_{B_r^{n+1}\cap\r}y^{1-2s}|\nabla\ou^\l_{\alpha,2}(x)|^2dxdy\leq C(r)\quad \mbox{for every}~r\geq 1,~\lambda\geq1.
\end{equation}
Following the  arguments of Lemma \ref{le4.3}, one can verify that
\begin{equation}
\label{5.22}
\|\nabla u_{\alpha,2}^\l\|_{L^\infty(B_{3r/2})}\leq C(r) \ \ \text{and} \ \
\int_{\B}\frac{|u_{\alpha,2}^\l(x)|}{1+|x|^{n+2s}}dx\leq C(r),
\end{equation}
and consequently,
\begin{equation}
\label{5.23}
\|u_{\alpha,2}^\l\|_{L^\infty(B_{3r/2})}\leq C(r).
\end{equation}
To prove \eqref{5.21}, we  consider $\partial_y\ou^\l_{\alpha,2}$ and $\nabla_x\ou^\l_{\alpha,2}$ separately. For the first term, we notice that
\begin{align*}
\partial_y\ou^\l_{\alpha,2}(X)=~&\partial_y(\ou^\l_{\alpha,2}(x,y)-u^\l_{\alpha,2}(x))=\partial_y\int_{\B}P(X,z)(u_{\alpha,2}^\l(z)-u_{\alpha,2}^\l(x))dz
\\=~&d_{n,s}\partial_y\int_{\B}\frac{y^{2s}}{|(x-z,y)|^{n+2s}}(u_{\alpha,2}^\l(z)-u_{\alpha,2}^\l(x))dz\\
=~&d_{n,s}\int_{\B}\partial_y\left(\frac{y^{2s}}{|(x-z,y)|^{n+2s}}\right)(u_{\alpha,2}^\l(z)-u_{\alpha,2}^\l(x))dz,
\end{align*}
where
$$P(X,z)=d_{n,s}\frac{y^{2s}}{|(x-z,y)|^{n+2s}} \ \ \text{ and } \ \ d_{n,s}\int_{\B}\frac{y^{2s}}{|(x-z,y)|^{n+2s}}dz=1.$$
By \eqref{5.22},  for $|x|\leq r$ it holds that
\begin{equation}
\label{5.24}
\begin{aligned}
&\left|\int_{\B\setminus B_ {3r/2}}\partial_y\left(\frac{y^{2s}}{|(x-z,y)|^{n+2s}}\right) (u_{\alpha,2}^\l(z)-u_{\alpha,2}^\l(x)) dz\right|\\
&\leq C(r)y^{2s-1}\int_{\B\setminus B_ {3r/2}}\frac{(|u_{\alpha,2}^\l(z)|+1)}{1+|z|^{n+2s}}dz\leq C(r)y^{2s-1}.
\end{aligned}
\end{equation}
Using \eqref{5.22}-\eqref{5.23}, yields
\begin{equation}
\label{5.25}
\begin{aligned}
&\int_{B_r^{n+1}\cap\r}y^{1-2s}\left(\int_{B_{3r/2}}\partial_y\left(\frac{y^{2s}}{|(x-z,y)|^{n+2s}}\right)(u_{\alpha,2}^\l(z)-u_{\alpha,2}^\l(x))dz\right)^2dxdy\\
&\leq C(r)\|\nabla u_{\alpha,2}^\l\|_{L^\infty(B_{3r/2})}^2\int_{B_r^{n+1}\cap\r}y^{1-2s}
\left(\int_{B_{3r/2}}\partial_y\left(\frac{y^{2s}}{|(x-z,y)|^{n+2s}}\right)|x-z|dz\right)^2dxdy\\
&\leq C(r)\|\nabla u_{\alpha,2}^\l\|_{L^\infty(B_{3r/2})}^2\int_{B_r^{n+1}\cap\r} y^{1-2s} dxdy\leq C(r).
\end{aligned}
\end{equation}
From \eqref{5.24}-\eqref{5.25},  we get
\begin{equation}
\label{5.26}
\begin{aligned}
\int_{B_r^{n+1}\cap\r}y^{1-2s}|\partial_y\ou^\l_{\alpha,2}|^2dxdy
\leq C(r)\int_{B_r^{n+1}\cap\r}\left(y^{1-2s}+y^{2s-1}\right)dxdy\leq C(r).
\end{aligned}
\end{equation}
For the term $\nabla_x\ou_{\alpha,2}^\l$,  in a similar way,   we get
\begin{equation}
\label{5.27}
\int_{B_r^{n+1}\cap\r}y^{1-2s}|\nabla_x\ou^\l_{\alpha,2}|^2dxdy\leq C(r).
\end{equation}
Then,  \eqref{5.20} follows from \eqref{5.26} and \eqref{5.27}. This completes the proof.	
\end{proof}

\begin{proposition}
\label{pr5.1}
We have
\begin{equation*}
\lim_{\l\to+\infty}E_s(\overline{ \bff },0,\l)=\lim_{\l\to\infty}E_s(\overline{ \bff }^\l,0,1)<+\infty.
\end{equation*}
\end{proposition}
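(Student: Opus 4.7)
The plan is in two steps: verify the scaling identity $E(\overline{\bff},0,\l) = E(\overline{\bff}^\l,0,1)$, and then combine it with the monotonicity from Theorem \ref{thmono1s} to reduce the finiteness of the limit to a uniform-in-$\l$ upper bound on $E(\overline{\bff}^\l,0,1)$.

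The scaling identity I would obtain by direct change of variables $X = \l Y$ in each of the three pieces of (\ref{energyEs1}). The rescaling $\of^\l(X) = \of(\l X) - (2\alpha-Q-1)s\log\l$ is tailored so that the Dirichlet integrand transforms with an overall $\l^{n-2s}$ factor (Jacobian $\l^{n+1}$ times $y^{1-2s}$ contributing $\l^{1-2s}$ and $|\nabla|^2$ contributing $\l^{-2}$), the boundary exponential $e^{-(\of-\ofa)}\,dx$ also scales by $\l^{n-2s}$ because $\of-\ofa$ picks up exactly $2s\log\l$ from the logarithmic corrections, and the boundary-surface integrand $y^{1-2s}[\of-(2\alpha-Q-1)s\log r]\,d\sigma$ scales by $\l^{n+1-2s}$ since the additive $\log\l$ in $\log r = \log\l + \log r_Y$ is absorbed by construction. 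The prefactors $\l^{2s-n}$ and $\l^{2s-1-n}$ then cancel these powers, yielding the identity.

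For the uniform upper bound on $E(\overline{\bff}^\l,0,1)$, I would handle the three terms in turn. The Dirichlet piece $\tfrac12\sum_\alpha\int_{B_1^{n+1}\cap\r}y^{1-2s}|\nabla\of^\l|^2\,dxdy$ is bounded by Lemma \ref{le5.3} with $r = 1$. The exponential piece $-\kappa_s\sum_\alpha\int_{\br\cap B_1^{n+1}}e^{-(\of^\l-\ofa^\l)}\,dx$ is nonpositive and hence already $\leq 0$ (its absolute value is bounded using Corollary \ref{cr4.1}). For the boundary-surface piece, after the sign manipulation $-(2\alpha-Q-1) = Q+1-2\alpha$, Lemma \ref{le5.1} rewrites the relevant integral as
\[
c_s\sum_{\ell=1}^{[Q/2]}(\ell Q - \ell^2)\bigl(d_\ell^\l + d_{Q-\ell}^\l\bigr) + O(1).
\]
The weights $\ell Q - \ell^2$ are strictly positive for $1 \leq \ell \leq Q-1$, and Lemma \ref{le4.4} furnishes a uniform-in-$\l\geq 1$ upper bound on each constant $d_\alpha^\l$; therefore the entire expression is bounded above uniformly in $\l$. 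Collecting the three bounds gives $E(\overline{\bff}^\l,0,1) \leq C$ for all $\l \geq 1$, and by the monotonicity of $\l \mapsto E(\overline{\bff},0,\l) = E(\overline{\bff}^\l,0,1)$ from Theorem \ref{thmono1s}, the limit exists in $(-\infty,+\infty]$; the upper bound then forces it to be finite.

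The one delicate point is the sign-asymmetry in Lemma \ref{le4.4}: only upper bounds on the constants $d_\alpha^\l$ are asserted, never lower bounds. The plan succeeds precisely because in Lemma \ref{le5.1} these constants are multiplied by the uniformly positive weights $\ell Q - \ell^2$, so an upper bound on each $d_\alpha^\l$ does upgrade to an upper bound on the whole boundary contribution, which together with monotonicity is all that is required for finiteness of the limit.
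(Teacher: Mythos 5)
Your proposal is correct and follows essentially the same route as the paper: establish the scaling identity $E(\overline{\bff},0,\l)=E(\overline{\bff}^\l,0,1)$, invoke the monotonicity of Theorem \ref{thmono1s}, and then bound $E(\overline{\bff}^\l,0,1)$ term by term using Lemma \ref{le5.3} (Dirichlet piece), Lemma \ref{le4.1}/Corollary \ref{cr4.1} (exponential piece), and Lemma \ref{le5.1} combined with the upper bound on $d_\alpha^\l$ from Lemma \ref{le4.4} and the positivity of the weights $\ell Q-\ell^2$ (boundary piece). The only stylistic difference is that you stop once a uniform upper bound on $E(\overline{\bff}^\l,0,1)$ is obtained, which suffices for $\lim_\l E<+\infty$, whereas the paper additionally combines this upper bound with the monotonicity lower bound $E(\overline{\bff}^\l,0,1)\ge E(\overline{\bff},0,1)$ to squeeze out two-sided bounds on $\sum_\ell(\ell Q-\ell^2)(d_\ell^\l+d_{Q-\ell}^\l)$ and hence on each $d_\alpha^\l$; that by-product is not needed for this proposition but is quoted later (e.g.\ in the proof of Lemma \ref{le5.4}).
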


\begin{proof}
From Lemma \ref{le4.1} and Lemma \ref{le5.3}, we conclude that the following is bounded in $\l\in[1,\infty)$
\begin{equation*}
  \sum_\alpha\left(\frac12\int_{\r\cap B_1^{n+1}}y^{1-2s}|\nabla\of^\l|^2dxdy-\kappa_s\int_{\partial\r\cap B_1^{n+1}}e^{-(\of^\l-\ofa^\l)}dx\right).
\end{equation*}
Using Theorem \ref{thmono1s} and Lemma \ref{le5.1}, we get
\begin{equation*}
\begin{aligned}
E(\ou^\l,0,1)&=sc_s\sum_{\ell=1}^{[\frac{Q}{2}]}(\ell Q-\ell^2)(d_\ell^\l+d_{Q-\ell}^\l)+O(1)
\\&\geq  E(\ou,0,1)=sc_s\sum_{\ell=1}^{[\frac{Q}{2}]}(\ell Q-\ell^2)(d_\ell^1+d_{Q-\ell}^1)+O(1),
\end{aligned}
\end{equation*}
which together with Lemma \ref{le4.4} implies
\begin{equation}
\label{5.28}
\sum_{\ell=1}^{[\frac{Q}{2}]}(\ell Q-\ell^2)(d_\ell^\l+d_{Q-\ell}^\l) ~\ \mbox{is bounded for}~\ \l\geq1.
\end{equation}
In addition, using the fact that $(\ell Q-\ell^2)$ is strictly positive for every $\ell=1,\cdots,[\frac{Q}{2}]$, we  get
$$d_\alpha^\l ~\ \mbox{is bounded for}~\ \l\geq1,\quad \alpha=1,\cdots,Q-1.$$
From Lemma \ref{le5.1}, we conclude
\begin{equation}
\label{5.29}
\sum_\alpha(2\alpha-Q-1)\int_{\partial B^{n+1}_1\cap\r}y^{1-2s}\of^\l(X)d\sigma=O(1).
\end{equation}
This finishes the proof.
\end{proof}

\begin{lemma}
\label{le5.4}
For every $r>0$ and $\lambda\geq 1$, we have $$\int_{B_r^{n+1}\cap\r}t^{1-2s}\left(|\ou_\alpha^\l|^2+|\nabla \ou_\alpha^\l|^2\right)dxdy\leq C(r),\quad\alpha=1,\cdots,Q-1.$$
\end{lemma}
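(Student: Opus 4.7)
The plan is to split the bound into its gradient and $L^2$ parts, handling each by assembling prior results, using repeatedly the linearity of the $s$-harmonic extension and the fact that the extension of a constant is the same constant.

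For the gradient piece I would start from $u_\alpha^\l = f_\alpha^\l - f_{\alpha+1}^\l$. Linearity of the extension gives $\ou_\alpha^\l = \of^\l - \ofb^\l$, so $|\nabla \ou_\alpha^\l|^2 \le 2(|\nabla \of^\l|^2 + |\nabla \ofb^\l|^2)$. Summing over $\alpha$ and invoking Lemma \ref{le5.3} immediately yields
$$\int_{B_r^{n+1}\cap\r} y^{1-2s}\,|\nabla \ou_\alpha^\l|^2\,dx\,dy \le C(r)$$
for all $r>1$ and $\l\ge 1$; the case $0<r\le 1$ is absorbed by the $r=1$ bound via monotonicity of the integrand's domain.

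For the $L^2$ piece I would use the representation formula of Lemma \ref{le4.4}, $u_\alpha^\l = v_\alpha^\l + d_\alpha^\l$ with $v_\alpha^\l$ defined in \eqref{4.4}. Taking $s$-harmonic extensions and using linearity, $\ou_\alpha^\l = \overline{v}_\alpha^\l + d_\alpha^\l$, whence $|\ou_\alpha^\l|^2 \le 2|\overline{v}_\alpha^\l|^2 + 2|d_\alpha^\l|^2$. The first term is bounded in $L^2(B_r^{n+1}\cap\r,\,y^{1-2s}\,dx\,dy)$ by $C(r)$ using Lemma \ref{le33.12}, noting that the key chain of estimates \eqref{eq.33-4}--\eqref{eq.33-8} inside its proof delivers precisely the weighted bound with weight $y^{1-2s}$. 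The second term is handled by Proposition \ref{pr5.1}, where $|d_\alpha^\l|\le C$ uniformly for $\l\ge 1$ was established, so that $|d_\alpha^\l|^2\int_{B_r^{n+1}\cap\r} y^{1-2s}\,dx\,dy \le C(r)$.

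Summing the two estimates completes the proof. The most delicate point is the weight used in Lemma \ref{le33.12}: the weight appearing in the stated conclusion and the weight actually produced in its proof do not coincide, and what is needed here is the latter, $y^{1-2s}$. If one preferred to avoid relying on this, an alternative route is a weighted Poincar\'e-type inequality on $B_r^{n+1}\cap\r$ with weight $y^{1-2s}$: combined with the gradient bound just established and an $L^2$ trace bound on $B_r\subset\partial\r$ for $u_\alpha^\l$ (which follows from Lemma \ref{le4.4} together with the $L^p$ estimates of Proposition \ref{pr4.1} and the boundedness of $d_\alpha^\l$), this yields the same weighted $L^2$ bound on $\ou_\alpha^\l$.
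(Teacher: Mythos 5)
Your proof is correct and takes essentially the same approach as the paper: the gradient term is reduced to Lemma~\ref{le5.3} (your derivation via $\ou_\alpha^\l=\of^\l-\ofb^\l$ and Cauchy--Schwarz is equivalent to invoking the bound \eqref{5.20} established inside that lemma's proof), and the $L^2$ term is handled by writing $\ou_\alpha^\l=\overline{v}_\alpha^\l+d_\alpha^\l$, using the uniform bound on $d_\alpha^\l$ from Proposition~\ref{pr5.1}, and appealing to Lemma~\ref{le33.12}. You are also right to flag the weight discrepancy: the exponent $y^{3-2s}$ in the statement of Lemma~\ref{le33.12} is a typo, as its proof (see \eqref{eq.33-2} and \eqref{eq.33-4}) produces the bound with weight $y^{1-2s}$, which is exactly what is needed and what the paper uses.
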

\begin{proof}
Based on Lemma \ref{le5.3} we only need to show that
\begin{equation*}
\int_{B_r^{n+1}\cap\r}y^{1-2s}|\ou_\alpha^\l|^2dxdy\leq C(r).
\end{equation*}	
Since  $d_\alpha^\l$ is bounded, from Lemma \ref{le33.12} we have
\begin{equation*}
\int_{B_r^{n+1}\cap\r}y^{1-2s}|\ou_\alpha^\l|^2dxdy\leq C\int_{B_r^{n+1}\cap\r}y^{1-2s}(1+|\overline{v}_\alpha^\l|^2)dxdy\leq C(r).
\end{equation*}
This finishes the proof.
\end{proof}

We are now ready to provide the proofs of Theorem \ref{thmtodam} and Theorem \ref{thmtodaq}. Since the proofs are very similar when $n>2s$, we provide the proof simultaneously.

\begin{proof}[Proofs of Theorem \ref{thmtodam} and Theorem \ref{thmtodaq}.] Let $n>2s$. Let $(f_1,\cdots,f_Q)$ be a stable outside a compact solution of \eqref{main} satisfying \eqref{assum2}. Similar arguments hold for stable solutions of  \eqref{main} satisfying \eqref{assum2}. Let $R>1$ be such that $\bff$ is stable outside the ball $B_R.$ From Lemma \ref{le5.4} and \eqref{5.18} we obtain that
$$\int_{B_r^{n+1}\cap\r}y^{1-2s}(|\of^\l|^2+|\nabla\of^\l|^2)dxdy\leq C(r),\quad \alpha=1,\cdots,Q.$$
Hence, there exists a sequence $\lambda_i\to+\infty$ such that $$\of^{\lambda_i}~\mbox{converges weakly to}~\of^\infty~\mbox{in}~ \dot{H}^1_{\mathrm{loc}}(\overline\r,y^{1-2s}dxdy),\quad\alpha=1,\cdots,Q.$$ In addition, we have $\ou_\alpha^{\l_i}\to \ou_\alpha^\infty$ almost everywhere. We can argue as \cite[Theorem 1.1]{hy} to show that $\of^\infty$ satisfies \eqref{maine} in the weak sense. Next, we show that the limit function $\of^\infty$ is homogenous. Based on the above convergence arguments, for any $r>0$ we get
\begin{equation}
\label{5.r}
{\lim_{i\to\infty}E(\overline{f},\l_ir,0)~\ \mbox{is independent of}~\ r}.
\end{equation}
Indeed, for any two positive numbers $r_1 <r_2$ we have
\begin{equation*}
\lim_{i\to\infty}E(\overline{f},\l_ir_1,0)\leq\lim_{i\to\infty}E(\overline{f},\l_ir_2,0).
\end{equation*}
On the other hand, for any $\lambda_i$, we  choose $\lambda_{m_i}$ such that $\{\lambda_{m_i}\}\subset\{\lambda_i\}$ and $\lambda_i r_2\leq \lambda_{m_i} r_1$. As a consequence, we have
\begin{equation*}
\lim_{i\to\infty}E(\overline{f},\l_ir_2,0)\leq\lim_{i\to\infty}E(\overline{f},\l_{m_i}r_1,0)=\lim_{i\to\infty}E(\overline{f},\l_ir_1,0).
\end{equation*}
This finishes the proof of \eqref{5.r}. Using \eqref{5.r} we see that for $R_2>R_1>0,$
\begin{equation*}
\begin{aligned}
0=~&\lim_{i\to\infty}E(\overline{f},\l_iR_2,0)-\lim_{i\to+\infty}E(\overline{f},\l_iR_1,0)\\
=~&\lim_{i\to\infty}E(\overline{f}^{\l_i},R_2,0)-\lim_{i\to\infty}E(\overline{f}^{\l_i},R_1,0)\\
\geq~&\liminf\limits_{i\to+\infty}\int_{\left(B_{R_2}^{n+1}\setminus B_{R_1}^{n+1}\right)\cap\r}y^{1-2s}\sum_\alpha\left(\frac{\partial \of^{\l_i}}{\partial r}-\frac{(2\alpha-Q-1)s}{r}\right)^2dxdy\\
\geq~&\int_{\left(B_{R_2}^{n+1}\setminus B_{R_1}^{n+1}\right)\cap\r}y^{1-2s}\sum_\alpha\left(\frac{\partial \of^{\infty}}{\partial r}-\frac{(2\alpha-Q-1)s}{r}\right)^2dxdy.
\end{aligned}
\end{equation*}
where we only used the weak convergence of $\of^{\l_i}$
to $\of^\infty$ in $H^1_{\mathrm{loc}}(\overline\r,y^{1-2s}dxdy)$ in last inequality. Therefore,
$$\frac{\partial \of^\infty}{\partial r}+\frac{s(Q+1-2\alpha)}{r}=0\quad \mbox{a.e. in}\quad \r.$$
In addition, $\bff^\infty$ is also stable because the stability condition for $\bff^{\l_i}$ passes to the limit. This is in contradiction with Theorem \ref{homog}.  Now, let  $n\le 2s$ and  $(f_1,\cdots,f_Q)$ be a stable solution of \eqref{main}.  Applying the stability inequality with appropriate test functions and using the $s$-extension arguments, completes the proof.
\end{proof}

\end{document}